\documentclass{amsart}
\usepackage{amssymb}
\usepackage{amsfonts}
\usepackage[all]{xy}
\usepackage{amssymb}
\usepackage{amsmath}
\usepackage{amsthm}
\usepackage{enumerate}
\usepackage{tabularx}
\usepackage{pdfpages}
\usepackage{centernot}
\usepackage{mathtools}
\usepackage{stmaryrd}
\usepackage{amsthm,amssymb}
\usepackage{etoolbox}
\usepackage{url}
\usepackage{tikz}
\usepackage{amssymb}
\usetikzlibrary{matrix}
\usepackage{tikz-cd}
\usepackage{tikz}
\usepackage{marginnote}
\definecolor{mygray}{gray}{0.85}
\usepackage[backgroundcolor=mygray,colorinlistoftodos,prependcaption,textsize=small]{todonotes}
\usepackage{xargs}                      
\let\hat\widehat

\newcommand{\mrm}[1]{\mathrm{#1}}

\renewcommand{\leq}{\leqslant}
\renewcommand{\geq}{\geqslant}

\makeatletter
\def\subsection{\@startsection{subsection}{3}%
  \z@{.5\linespacing\@plus.7\linespacing}{.3\linespacing}%
  {\bfseries\centering}}
\makeatother

\makeatletter
\def\subsubsection{\@startsection{subsubsection}{3}%
  \z@{.5\linespacing\@plus.7\linespacing}{.3\linespacing}%
  {\centering}}
\makeatother

\makeatletter
\def\myfnt{\ifx\protect\@typeset@protect\expandafter\footnote\else\expandafter\@gobble\fi}
\makeatother

\newtheorem{theorem}{Theorem}[section]

\theoremstyle{definition} 
\newtheorem{convention}[theorem]{Convention}
\newtheorem{corollary}[theorem]{Corollary}
\newtheorem{lemma}[theorem]{Lemma}
\newtheorem{proposition}[theorem]{Proposition}

\newtheorem{context}[theorem]{Context}

\theoremstyle{definition}

\newtheorem{fact}[theorem]{Fact}
\newtheorem{definition}[theorem]{Definition}
\newtheorem{remark}[theorem]{Remark}

\newtheorem{notation}[theorem]{Notation}

\usepackage[backgroundcolor=mygray,colorinlistoftodos,prependcaption,textsize=small]{todonotes}
\usepackage{xcolor}

\newcounter{claimcounter}
\numberwithin{claimcounter}{theorem}
\newenvironment{claim}{\refstepcounter{claimcounter}{\noindent {\underline{\em Claim \theclaimcounter}.}}}{}
\newenvironment{claimproof}[1]{\noindent{{\em Proof.}}\space#1}{\hfill $\rule{0.40em}{0.40em}$}

\newcommand{\pureindep}[1][]{%
  \mathrel{
    \mathop{
      \vcenter{
        \hbox{\oalign{\noalign{\kern-.3ex}\hfil$\vert$\hfil\cr
              \noalign{\kern-.7ex}
              $\smile$\cr\noalign{\kern-.3ex}}}
      }
    }\displaylimits_{#1}
  }
}

\begin{document}

\begin{abstract} 
We prove that every non-Archimedean Polish group is topologically
isomorphic to the outer automorphism group of a countable discrete
group. Furthermore, our construction is Borel. This implies that determining whether two countable discrete groups have topologically isomorphic outer automorphism groups is at least as complicated as classifying non-Archimedean Polish groups up to topological isomorphism, and so in particular not classifiable by countable structures.
Our construction is based on the theory of right-angled Coxeter
groups. Along the way, we prove results of independent interest on the topological group $\mrm{Aut}(W)$,
for $W$ a countable right-angled Coxeter group. This has purely group-theoretic applications; in particular, we give a 
graph-theoretic characterization of when the
group $\mrm{Spe}(W)$ of special automorphisms of $W$ coincides with $\mrm{Inn}(W)$. Combined with previous work, this gives a characterization of when $\mrm{Aut}(W)$ is inner-by-graph, for $W$ countable.
\end{abstract}

\title[Realizing non-Archimedean Polish groups]{Realizing non-Archimedean Polish groups as outer automorphism groups}

\thanks{Jean-Luc Rabideau would like to thank Denis Osin for introducing this problem to him and for sharing his ideas regarding a small-cancellation proof of the main theorem. Research of Gianluca Paolini was  supported by project PRIN 2022 ``Models, sets and classifications{''}, prot. 2022TECZJA, and by INdAM Project 2024 (Consolidator grant) ``Groups, Crystals and Classifications''.}

\author{Gianluca Paolini}
\address{Department of Mathematics ``Giuseppe Peano'', University of Torino, Via Carlo Alberto 10, 10123, Torino, Italy.}
\email{gianluca.paolini@unito.it}

\author{Jean-Luc Rabideau}
\address{Department of Mathematics, Vanderbilt University, 1326 Stevenson Center Ln, Nashville, TN 37240}
\email{jean-luc.p.rabideau@vanderbilt.edu}


\date{\today}
\maketitle
\tableofcontents

\section{Introduction}
A Polish group $P$ is called \textit{non-Archimedean} if it admits a countable neighborhood basis of the identity consisting of open subgroups. Equivalently, $P$ is topologically isomorphic to a closed subgroup of $\mrm{Sym}(\omega)$, the permutation group of $\omega$ in the pointwise convergence topology. Non-Archimedean Polish groups are exactly the automorphism groups $\mrm{Aut}(M)$ of countable structures $M$ in the pointwise convergence topology (see, e.g., \cite[1.5]{Becker_Kechris_1996} for both equivalences). It is then natural to ask whether every such group can be realized as $\mrm{Aut}(M)$ for $M$ chosen from a more restricted class of countable structures, such as the countable discrete groups.

It is not hard to show, however, that cyclic groups of odd order are never isomorphic to $\mrm{Aut}(C)$ for any discrete group $C$. Instead, for a countable discrete group $C$, we consider the outer automorphism groups \mbox{$\mrm{Out}(C) = \mrm{Aut}(C)/\mrm{Inn}(C)$}. In the quotient topology, these are non-Archimedean Polish groups --- equivalently, Hausdorff --- exactly when $\mrm{Inn}(C)$ is a closed subgroup of $\mrm{Aut}(C)$. Our main result is that all non-Archimedean Polish groups arise in this way. 

\begin{theorem}\label{thm:main}
    A Hausdorff topological group $G$ realizes as $\mrm{Out}(C) = \mrm{Aut}(C) / \mrm{Inn}(C)$ in the quotient topology for a countable group $C$ if and only if $G$ is non-Archimedean Polish.
\end{theorem}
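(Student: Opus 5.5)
The easy direction comes first. For a countable group $C$, $\mrm{Aut}(C)$ in the pointwise convergence topology is a closed subgroup of $\mrm{Sym}(C)$, so it is non-Archimedean Polish. Suppose the quotient $\mrm{Out}(C)$ is Hausdorff. Then $\mrm{Inn}(C)$ is closed: the identity coset is a closed point and its preimage is $\mrm{Inn}(C)$. The quotient of a Polish group by a closed normal subgroup is Polish. Images of open subgroups under the open quotient map are open subgroups, so a countable basis of open subgroups at the identity of $\mrm{Aut}(C)$ maps to a countable basis of open subgroups at the identity of $\mrm{Out}(C)$. Hence $\mrm{Out}(C)$ is non-Archimedean Polish.

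For the converse, fix a non-Archimedean Polish group $G$ and realize it as $\mrm{Aut}(\Gamma)$ for a countable graph $\Gamma$ with the pointwise topology. Any countable structure can be coded into a countable graph with the same automorphism group, topologically, by standard rigid-gadget constructions. I would choose the coding so that $\Gamma$ has strong extra properties suited to right-angled Coxeter groups:
\begin{itemize}
\item no two distinct vertices $u,v$ have $\mathrm{lk}(u)\subseteq \mathrm{st}(v)$, which kills the partial conjugations and transvections;
\item the complement of every star is connected.
\end{itemize}
Set $C=W(\Gamma)$. The plan is to show $\mrm{Aut}(W)=\mrm{Inn}(W)\rtimes\mrm{Aut}(\Gamma)$, i.e. that $\mrm{Aut}(W)$ is inner-by-graph. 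This would use the generation of $\mrm{Aut}(W)$ for finitely generated $W$ by graph automorphisms, inner automorphisms, partial conjugations and transvections (Laurence--Servatius/M\"uhlherr type results), extended to countable $W$. The extension should go through an approximation argument: every automorphism of $W$ restricts, on finitely generated special subgroups, to maps controlled by conjugation.

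The main obstacle is the countable case. An automorphism of $W$ need not be a product of finitely many conjugations and graph automorphisms, since $\mrm{Inn}(W)$ need not be closed. So I must show two things: conjugacy classes of generators are permuted, giving a homomorphism $\mrm{Aut}(W)\to\mrm{Aut}(\Gamma)$; and the kernel, the special automorphisms $\mrm{Spe}(W)$, equals $\mrm{Inn}(W)$ under the link/star condition. For the kernel, I would take an automorphism sending each generator $s$ to a conjugate $w_s s w_s^{-1}$, and use the connectivity of star complements to show the $w_s$ can be chosen coherently modulo centralizers. A uniform conjugator then exists by a compactness/normal-form argument: conjugators are determined by reduced words, and the coherence forces them to stabilize.

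Given the semidirect splitting, $\mrm{Inn}(W)$ is closed. $W$ is centerless when $\Gamma$ is infinite and not complete, so $W\cong\mrm{Inn}(W)$. The quotient map restricted to the closed complement $\mrm{Aut}(\Gamma)$ is a continuous bijective homomorphism of Polish groups onto $\mrm{Out}(W)$, hence a topological isomorphism by the open mapping theorem. So $\mrm{Out}(W)\cong\mrm{Aut}(\Gamma)\cong G$.
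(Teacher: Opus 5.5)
\textbf{There is a genuine gap in the converse.} Your easy direction is fine and matches the paper.

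The gap is the claim that, for a countable graph with your two properties, one gets $\mrm{Spe}(W)=\mrm{Inn}(W)$ because ``coherence forces the conjugators to stabilize.'' The two properties are: no $u\neq v$ with $\mathrm{lk}(u)\subseteq\mathrm{st}(v)$, and star-connectedness. The claim is false in infinite rank. Take $\Gamma$ to be the Rado graph.
\begin{itemize}
\item Star complements are connected: any two non-neighbours of $v$ have a common neighbour outside $N^*(v)$.
\item For $u\neq v$ there is a neighbour of $u$ outside $N^*(v)$.
\end{itemize}
So both of your conditions hold. Now enumerate the vertices as $x_1,x_2,\dots$. For each $n$ there is $v_n$ adjacent to $x_1,\dots,x_n$, so $\mrm{ad}(v_n)\to\mrm{id}$. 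But $v_n\notin Z(W)=\{e\}$. Hence $\mrm{Inn}(W)$ is not closed. It is therefore a proper subgroup of the closed group $\mrm{Spe}(W)$, and $\mrm{Out}(W)$ is not even Hausdorff.

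What your coherence argument can deliver is only that every special automorphism is \emph{locally} inner, i.e.\ $\mrm{Spe}(W)=\overline{\mrm{Inn}(W)}$. This is the paper's Density Theorem, proved via rank-two local conjugacy and a Helly property for parabolic cosets. Passing from the closure to $\mrm{Inn}(W)$ itself needs a separate closedness condition, and your proposal never identifies one.

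The missing idea is the finite-star-base property: a finite $A\subseteq V(\Gamma)$ with $\bigcap_{a\in A}N^*(a)=Z_\Gamma$. Then $\bigcap_{a\in A}C_W(a)=Z(W)$. So any convergent sequence $\mrm{ad}(g_i)$ is eventually constant, and $\mrm{Inn}(W)$ is discrete, hence closed.

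The paper builds its graph $\hat{\Gamma}$ to guarantee this. It uses a projective-plane coding, with ladder gadgets to rule out point/line swaps. The resulting $\hat{\Gamma}$ is star-connected, has the star-property, is triangle-free, and contains an induced $P_4$. The last two properties give $Z_{\hat\Gamma}=\emptyset$ and a four-vertex star base. The paper then invokes Hyttinen--Paolini to get $\mrm{Spe}=\mrm{Inn}$; equivalently, its own characterization: star-connected plus finite-star-base.

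Your ``standard rigid-gadget'' coding would have to be designed to secure such a condition too. You would also need to verify that the gadgets preserve star-connectedness and the \emph{topological} automorphism group. This is exactly the nontrivial content of the paper's graph construction, and the proposal leaves it unaddressed.

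A minor point: ``$W$ is centerless when $\Gamma$ is infinite and not complete'' is false in general; a cone over an infinite graph is a counterexample. Under your link condition $Z_\Gamma=\emptyset$ anyway, and centerlessness is not needed for the final step.
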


Theorem \ref{thm:main} follows from the stronger Theorem \ref{thm:stronger} below. Let $\cong_{\mrm{top}}$ denote isomorphism of topological groups. For every closed subgroup $G \leq \mrm{Sym}(\omega)$, we construct a countably infinite group $f(G)$ such that $\mrm{Out}(f(G)) \cong_{\mrm{top}} G$. In particular, Theorem \ref{thm:stronger} states that the map $G \mapsto f(G)$ from the Effros space of closed subgroups of $\mrm{Sym}(\omega)$ to the space of groups with domain $\omega$ is Borel.

\smallskip
As a corollary, using the main result of \cite{gao-nies-paolini}, we obtain the following.

\begin{corollary} Deciding whether two countable groups have topologically isomorphic outer automorphism groups is as hard as classifying non-Archimedean Polish groups up to topological isomorphism; in particular, the equivalence relation $\ell_\infty$ of bounded difference between sequences in $\mathbb{R}^\omega$ is Borel reducible to this problem.
\end{corollary}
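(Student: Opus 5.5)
The corollary follows by combining the Borel reduction of Theorem \ref{thm:stronger} with the main result of \cite{gao-nies-paolini}; all the group theory has already been done. First, fix the standard codings. Let $\mathcal{E}$ be the Effros Borel space of closed subgroups of $\mrm{Sym}(\omega)$, with the equivalence relation $\cong_{\mrm{top}}$. Let $\mathcal{G}$ be the Polish space of groups with domain $\omega$, a closed subset of $2^{\omega^3}$. On $\mathcal{G}$ put the relation $E_{\mrm{Out}}$ given by $C \mathrel{E_{\mrm{Out}}} D$ iff $\mrm{Out}(C) \cong_{\mrm{top}} \mrm{Out}(D)$.

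Theorem \ref{thm:stronger} gives a Borel map $f\colon \mathcal{E} \to \mathcal{G}$ with $\mrm{Out}(f(G)) \cong_{\mrm{top}} G$ for every $G$. Hence, for $G, H \in \mathcal{E}$,
\[ G \cong_{\mrm{top}} H \iff \mrm{Out}(f(G)) \cong_{\mrm{top}} \mrm{Out}(f(H)) \iff f(G) \mathrel{E_{\mrm{Out}}} f(H). \]
Both directions come from composing topological isomorphisms. So $f$ is a Borel reduction of $(\mathcal{E},\cong_{\mrm{top}})$ to $(\mathcal{G}, E_{\mrm{Out}})$, which gives the ``at least as hard'' part.

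The problem is exactly as hard, not merely at least as hard, by Theorem \ref{thm:main}. For each $C$, $\mrm{Out}(C)$ is either non-Hausdorff or non-Archimedean Polish. In the Polish case one wants a Borel map $C \mapsto$ (a closed subgroup of $\mrm{Sym}(\omega)$ topologically isomorphic to $\mrm{Out}(C)$). I expect this to be the only delicate point, and I would handle it as follows.
\begin{enumerate}[(i)]
\item Let $\mrm{Out}(C)$ act by translation on the countable set of cosets of the open subgroups that are images of pointwise stabilizers of finite tuples.
\item Check that this coset set is coded in a Borel way from $C$.
\item Deal with non-Hausdorff $\mrm{Out}(C)$ separately, for instance by noting that the set of $C$ with $\mrm{Inn}(C)$ closed is Borel.
\end{enumerate}
If this step becomes too technical, I would state only the lower bound, which is what the second clause of the corollary needs.

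For the $\ell_\infty$ clause, \cite{gao-nies-paolini} shows that $\ell_\infty$, bounded difference on $\mathbb{R}^\omega$, Borel reduces to topological isomorphism of non-Archimedean Polish groups (in fact of closed subgroups of $\mrm{Sym}(\omega)$, coded in $\mathcal{E}$). If that result uses a different coding of such groups, convert it to $\mathcal{E}$ by a standard Borel map. Composing that reduction $g$ with $f$ gives a Borel reduction $f\circ g$ of $\ell_\infty$ to $E_{\mrm{Out}}$. Since $\ell_\infty$ is not classifiable by countable structures (it is not Borel reducible to any orbit equivalence relation of an $S_\infty$-action), neither is $E_{\mrm{Out}}$.
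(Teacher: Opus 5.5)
Your argument for what the corollary actually asserts is correct and is the paper's route. The Borel map $f$ of Theorem~\ref{thm:stronger} satisfies $\mrm{Out}(f(G))\cong_{\mrm{top}}G$ (as derived in the proof of Theorem~\ref{thm:main}), so it Borel-reduces $\cong_{\mrm{top}}$ on $\mathbf{Cl}(\mrm{Sym}(\omega))$ to $E_{\mrm{Out}}$. Precomposing with the reduction of $\ell_\infty$ from \cite{gao-nies-paolini} then gives the second clause.

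You should, however, drop the claim that the problem is \emph{exactly} as hard ``by Theorem~\ref{thm:main}''. The corollary only means ``at least as hard'', as the abstract makes explicit, and Theorem~\ref{thm:main} gives no Borel reduction in the other direction. Your sketch does not supply one either, for two reasons:
\begin{enumerate}[(i)]
\item There is no evident Borel way to pass from $C$ to a code for $\mrm{Out}(C)$. Which finite partial maps on your coset set are induced by automorphisms of $C$ is a priori only a $\Sigma^1_1$ question.
\item Step (iii) merely isolates the $C$ with $\mrm{Inn}(C)$ non-closed, without classifying them. For instance, if $\mrm{Inn}(C)$ is dense but not closed, then $\mrm{Out}(C)$ is indiscrete. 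On such $C$ the relation $E_{\mrm{Out}}$ is just abstract isomorphism of uncountable groups.
\end{enumerate}
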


It was first shown by Matumoto \cite{Matumoto1989} that every group $H$ arises as $\mrm{Out}(K)$ for some group $K$, in the purely algebraic sense. This result has since been improved by restricting the class of groups $H$. For example, every group $H$ realizes as $\mrm{Out}(K)$, where $K$ is simple \cite{DROSTE_GIRAUDET_GOBEL_2001}, metabelian \cite{GoebelParas2000}, or finitely generated for $H$ countable~\cite{bumagin_wise}. 

However, there are two main complications in trying to use previously known methods to prove Theorem \ref{thm:main}. The first, which has already been mentioned, concerns the topology of $\mrm{Out}(C)$. In general, $\mrm{Inn}(C)$ is not necessarily closed in $\mrm{Aut}(C)$, so the quotient topology on $\mrm{Out}(C)$ need not be Hausdorff. For example, if $C$ is the group of finitely supported permutations of $\omega$, then $\mrm{Inn}(C) = C$, but its closure in $\mrm{Aut}(C)$ is $\mrm{Aut}(C) = \mrm{Sym}(\omega)$ (see \cite[Theorem 8.2A]{DixonMortimer1996}). Hence, the quotient topology on $\mrm{Out}(C)$ is indiscrete, and in particular not Hausdorff.

The second issue arises from the restriction on the cardinality of $C$, even if we disregard topology. In general, a countable group $C$ can have at most $2^{\aleph_0}$ automorphisms. However, the cardinality of $\mrm{Out}(C)$ is either $\leq \aleph_0$ or $2^{\aleph_0}$. Indeed, in this case $\mrm{Aut}(C)$ is Polish and the coset equivalence relation $\alpha \sim \beta \iff \alpha\beta^{-1} \in \mrm{Inn}(C)$ is Borel. Hence, Silver's Dichotomy \cite{SILVER19801} implies $\mrm{Aut}(C)/\mrm{Inn}(C)$ is either countable or of cardinality continuum. Therefore, additional assumptions on $G$ (beyond the obvious $|G| \leq 2^{\aleph_0}$) are necessary, even if one only aims to realize it as $\mrm{Out}(C)$ for a countable group $C$ in a purely algebraic (non-topological) sense.

Our construction is based on the theory of right-angled Coxeter groups, and the well-known observation that all non-Archimedean Polish groups are automorphism groups of graphs (see Fact \ref{second_fact}). Namely, if $W(\Gamma)$ is a right-angled Coxeter group on a graph $\Gamma$ (see Section \ref{sec_preliminaries}), a fundamental result of Tits \cite{tits} shows that $\mrm{Aut}(W(\Gamma))$ decomposes as a semidirect product of two subgroups $\mrm{Spe}(W(\Gamma))$ and $F(\Gamma)$ (see Definitions \ref{spe_def} and \ref{def_FGamma}, respectively). We say a decomposition $G = K \rtimes H$ of a topological group $G$ is a \textit{topological decomposition} if the topology on $G$ agrees with the product topology on $K \times H$. Towards proving Theorem \ref{thm:main}, we also prove:

\begin{theorem}\label{theorem_tits_topological}
    Let $W=W(\Gamma)$ be a countable right-angled Coxeter group. Then the Tits decomposition $\mrm{Aut}(W(\Gamma)) = \mrm{Spe}(W(\Gamma)) \rtimes F(\Gamma)$ is a topological decomposition. In particular, both $\mrm{Spe}(W(\Gamma))$ and $F(\Gamma)$ are closed subgroups of $\mrm{Aut}(W(\Gamma))$.
\end{theorem}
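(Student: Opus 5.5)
The plan is to reduce the statement to the continuity of the projection $\pi\colon \mrm{Aut}(W)\to F(\Gamma)$, $\alpha=\sigma f\mapsto f$ (with $\sigma\in\mrm{Spe}(W)$, $f\in F(\Gamma)$). Granting this, the map $\alpha\mapsto(\alpha\,\pi(\alpha)^{-1},\pi(\alpha))$ is a continuous inverse of the continuous multiplication map $\mrm{Spe}(W)\times F(\Gamma)\to\mrm{Aut}(W)$, so the decomposition is topological. Closedness of the two factors then follows formally:
\[
F(\Gamma)=\{\alpha:\pi(\alpha)=\alpha\}
\qquad\text{and}\qquad
\mrm{Spe}(W)=\pi^{-1}(\{\mathrm{id}\}),
\]
and both are closed because $\pi$ is continuous and $\mrm{Aut}(W)$ is Hausdorff.

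\emph{Step 1: a continuous homomorphism with kernel $\mrm{Spe}(W)$.} Closedness of $\mrm{Spe}(W)$ can also be seen directly. For each $v\in V(\Gamma)$ and each $w\in W$, the set $\{\alpha:\alpha(v)=w\}$ is clopen, so $\{\alpha:\alpha(v)\in v^W\}$ is clopen, and $\mrm{Spe}(W)$ is the intersection of these sets.

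More usefully, I will let $X$ be the countable set of conjugacy classes of (standard) finite subgroups $W_\Delta$, for $\Delta$ a finite clique of $\Gamma$, and of their elements. Then $\mrm{Aut}(W)$ acts on $X$, giving a homomorphism $\rho\colon\mrm{Aut}(W)\to\mrm{Sym}(X)$. This map is continuous: the image of a class is determined by the image of one representative, which is in turn determined by the images of finitely many generators. Using Tits' description of $\mrm{Spe}(W)$ as the automorphisms sending each generator to a conjugate of itself, I will check that $\ker\rho=\mrm{Spe}(W)$. Hence $\rho\restriction F(\Gamma)$ is injective, and $\rho(\alpha)=\rho(\pi(\alpha))$ for every $\alpha$.

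\emph{Step 2 (main obstacle): $\rho\restriction F(\Gamma)$ is a homeomorphism onto its image.} If this holds, then $\pi=(\rho\restriction F(\Gamma))^{-1}\circ\rho$ is continuous, which finishes the proof. Continuity of $\rho\restriction F(\Gamma)$ is already known from Step 1. For the inverse, I must show that each basic neighbourhood $\{f\in F(\Gamma): f(v_i)=v_i,\ i<n\}$ contains the set of $f\in F(\Gamma)$ fixing some finite set of classes in $X$.

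Here I would use the defining property of $F(\Gamma)$ from Definition \ref{def_FGamma}: an element $f\in F(\Gamma)$ maps each generator $v$ to an element of a standard finite subgroup $W_\Delta$, where $\Delta$ is a finite clique and $f(v)$ is a product of pairwise commuting generators. The key point is that distinct elements of such subgroups lie in distinct conjugacy classes, since conjugacy classes of involutions in right-angled Coxeter groups are detected by their supports. Therefore $f(v)$ is recovered from the class of $f(v)$, that is, from $\rho(f)$ evaluated at the class of $v$, and possibly also at the classes of $W_{\mathrm{st}}$-type finite subgroups containing $v$ if supports alone are insufficient.

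Consequently, if $\rho(f)$ fixes the finitely many classes $[v_i]$ (together with the finitely many auxiliary classes attached to them), then $f(v_i)=v_i$. This gives continuity of the inverse on the image, and hence the theorem. The step I expect to need most care is exactly this recovery of $f(v)$ from finitely many conjugacy-class data, which relies on the precise form of $F(\Gamma)$.
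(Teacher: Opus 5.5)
Your argument is correct, but it takes a different route from the paper. The paper proves closedness of the two factors separately. It gets $\mrm{Spe}(W)$ as the kernel of the natural map to $F(\Gamma)$ (Proposition~\ref{Spe_closed}, where continuity of that map is asserted in one line and used only to get a closed kernel). It gets $F(\Gamma)$ as $\bigcap_{v}\mrm{ev}_v^{-1}(\mrm{CP})$, the automorphisms sending each generator to a clique product. It then obtains the topological decomposition from Lemma~\ref{lemma_top_decomp}, i.e.\ the Open Mapping Theorem applied to the continuous bijective homomorphism $\mrm{Spe}(W)\rtimes F(\Gamma)\to\mrm{Aut}(W)$.

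You instead prove that the projection $\pi$ is continuous into $F(\Gamma)$ with its subspace topology, write down the continuous inverse of multiplication, and get closedness of both factors as a corollary. The paper's route is shorter but relies on automatic continuity, hence on $\mrm{Aut}(W)$ being Polish, i.e.\ on countability. Yours is elementary and never uses countability, so it gives the theorem for right-angled Coxeter groups of arbitrary rank.

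Two small repairs are needed.

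First, $\mrm{Aut}(W)$ does not act on the conjugacy classes of the standard subgroups $W_\Delta$. If $\Gamma$ is a single edge $\{u,v\}$, the element of $F(\Gamma)$ given by $u\mapsto uv$, $v\mapsto v$ sends $W_{\{u\}}$ to $\{e,uv\}$. This is not conjugate to any standard subgroup, since $W$ is abelian. Just take $X$ to be the set of conjugacy classes of involutions; then $\ker\rho=\mrm{Spe}(W)$ is literally Definition~\ref{spe_def}.

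Second, conjugacy classes of involutions are not detected by supports: $uvu$ with $u,v$ non-adjacent has support $\{u,v\}$ but is conjugate to $v$. The correct reason is that the clique product $\prod_{u\in D}u$ maps to $D$ under $W\to W^{\mrm{ab}}\cong T(\Gamma)$, so distinct clique products are never conjugate. Since $f(v)$ is the clique product on $f(\{v\})$, it is the unique clique product in the class $\rho(f)([v])$. Hence
\[
\{f\in F(\Gamma):\rho(f)([v_i])=[v_i],\ i<n\}=\{f\in F(\Gamma):f(v_i)=v_i,\ i<n\}.
\]
This settles your Step 2 without any auxiliary subgroup classes. Checking at the identity suffices, since $(\rho|_{F(\Gamma)})^{-1}$ is a homomorphism.
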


One of the contributions of this paper concerns the equality
$\mrm{Spe}(W(\Gamma)) = \mrm{Inn}(W(\Gamma))$. Together with the equality
$F(\Gamma) = \mrm{Aut}(\Gamma)$, it governs, via Tits' result, whether
$\mrm{Aut}(W(\Gamma))$ is inner-by-graph --- a long-established line of
research in Coxeter group theory; see e.g.\ \cite{franzsen-howlett-muhlherr}
and references therein. The equality
$F(\Gamma) = \mrm{Aut}(\Gamma)$ is completely understood (see
Fact~\ref{fact_F_gamma_star_property}), so we focus on the other one. When
$\Gamma$ is finite, $\mrm{Spe}(W(\Gamma)) = \mrm{Inn}(W(\Gamma))$ holds if and
only if $\Gamma$ is star-connected (see Definition~\ref{def_starconnected} and
Fact~\ref{spe_inn}). For infinite $\Gamma$, star-connectedness remains necessary
but is no longer sufficient, as Hyttinen and the first author observed in \cite[Theorem~2.18]{pao2}.

\smallskip
Our approach is to separate the equality into its two topological ingredients,
the density and the closedness of $\mrm{Inn}(W(\Gamma))$ in
$\mrm{Spe}(W(\Gamma))$, and to characterize each for countable $\Gamma$. On the
density side, Theorem~\ref{density_theorem} shows that star-connectedness alone
forces $\mrm{Spe}(W(\Gamma)) = \overline{\mrm{Inn}(W(\Gamma))}$. On the
closedness side, we introduce the finite-star-base property
(Definition~\ref{definition_finite_star_base}) and prove in
Theorem~\ref{theorem:inn_closed} that, for countable $\Gamma$, it holds exactly
when $\mrm{Inn}(W(\Gamma))$ is closed. Since $\mrm{Spe}(W(\Gamma)) =
\mrm{Inn}(W(\Gamma))$ amounts to $\mrm{Inn}(W(\Gamma))$ being simultaneously
dense and closed in $\mrm{Spe}(W(\Gamma))$, the two theorems combine to give the
following complete characterization.

\begin{theorem}\label{theorem_spe_inn} Let $W = W(\Gamma)$ be a countable right-angled Coxeter group. Then the following are equivalent. \begin{enumerate}[(1)]
    \item\label{theorem_spe_inn_1} $\mrm{Spe}(W) = \mrm{Inn}(W)$.
    \item\label{theorem_spe_inn_2} $\Gamma$ is star-connected and $\mrm{Inn}(W)$ is closed in $\mrm{Aut}(W)$.
    \item\label{theorem_spe_inn_3} $\Gamma$ is star-connected and $\Gamma$ has the finite-star-base property (cf. \ref{definition_finite_star_base}).
\end{enumerate} 
\end{theorem}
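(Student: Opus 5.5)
The plan is to derive the theorem from the two results announced just before it: Theorem~\ref{density_theorem}, which says that star-connectedness of $\Gamma$ gives $\mrm{Spe}(W)=\overline{\mrm{Inn}(W)}$, and Theorem~\ref{theorem:inn_closed}, which says that for countable $\Gamma$ the group $\mrm{Inn}(W)$ is closed in $\mrm{Aut}(W)$ exactly when $\Gamma$ has the finite-star-base property. Two further inputs are needed. The first is the chain of inclusions $\mrm{Inn}(W)\leq \mrm{Spe}(W)\leq \mrm{Aut}(W)$; the first inclusion is standard, since conjugation by a generator sends every generator to a conjugate of itself. The second is that $\mrm{Spe}(W)$ is closed in $\mrm{Aut}(W)$, which is given by Theorem~\ref{theorem_tits_topological}. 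It follows that closure taken inside $\mrm{Spe}(W)$ agrees with closure taken inside $\mrm{Aut}(W)$.

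For (\ref{theorem_spe_inn_2})$\Leftrightarrow$(\ref{theorem_spe_inn_3}), the star-connectedness clause is the same on both sides. The remaining clauses are equivalent by Theorem~\ref{theorem:inn_closed}, because $\Gamma$ is countable.

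For (\ref{theorem_spe_inn_2})$\Rightarrow$(\ref{theorem_spe_inn_1}), star-connectedness and Theorem~\ref{density_theorem} give $\mrm{Spe}(W)=\overline{\mrm{Inn}(W)}$. Since $\mrm{Inn}(W)$ is assumed closed, this closure is $\mrm{Inn}(W)$ itself, so $\mrm{Spe}(W)=\mrm{Inn}(W)$.

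For (\ref{theorem_spe_inn_1})$\Rightarrow$(\ref{theorem_spe_inn_2}), closedness is immediate: $\mrm{Inn}(W)=\mrm{Spe}(W)$, and $\mrm{Spe}(W)$ is closed by Theorem~\ref{theorem_tits_topological}. Star-connectedness is the part that needs real work, and I expect it to be the main obstacle. The introduction says star-connectedness "remains necessary" for infinite $\Gamma$, but the finite-case Fact~\ref{spe_inn} does not transfer automatically. So I would prove the contrapositive directly.

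Suppose $\Gamma$ is not star-connected. Then there is a vertex $v$ such that $\Gamma\setminus \mathrm{st}(v)$ has at least two components; choose one of them, $C$. I would use the partial conjugation $\pi_{v,C}$, which sends $u\mapsto vuv$ for $u\in C$ and fixes every other generator. This is a special automorphism. I would then show it is not inner, by assuming $\pi_{v,C}$ equals conjugation by some $w\in W$ and deriving a contradiction, using only finitely many vertices:
\begin{itemize}
\item For each $u$ outside $C\cup\mathrm{st}(v)$, the element $w$ must centralize $u$.
\item For each $u\in C$, the element $wv$ must centralize $u$.
\end{itemize}
Take a reduced word for $w$; it involves only finitely many vertices. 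Pick a vertex $a\in C$ and a vertex $b$ in a different component of $\Gamma\setminus\mathrm{st}(v)$. In a right-angled Coxeter group the centralizer of a generator $u$ is $\langle \mathrm{st}(u)\rangle$. This gives $w\in\langle\mathrm{st}(b)\rangle$ and $wv\in\langle\mathrm{st}(a)\rangle$. Passing to the special subgroup on the finitely many relevant vertices, and applying the finite-case argument behind Fact~\ref{spe_inn} there, should yield the contradiction.

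If this direct route becomes messy, the fallback is to use Theorem~\ref{density_theorem} only in the direction needed. The key point is that no step uses finiteness of $\Gamma$: the centralizer computation and the support argument each involve only finitely many vertices.
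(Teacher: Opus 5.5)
Your treatment of (2)$\Leftrightarrow$(3), (2)$\Rightarrow$(1), and the closedness half of (1)$\Rightarrow$(2) is exactly the paper's argument. The paper cites Proposition~\ref{Spe_closed} rather than Theorem~\ref{theorem_tits_topological} for closedness of $\mrm{Spe}(W)$, but this is the same fact.

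The only divergence is the necessity of star-connectedness. The paper does not prove this; it cites \cite[Theorem~2.8]{pao2}. You instead argue directly that a partial conjugation $\pi_{(v,C)}$ is not inner. Your route is more self-contained and works, but as written it stops at ``should yield the contradiction.'' The appeal to the finite-rank argument behind Fact~\ref{spe_inn} is unnecessary. Two fixes finish it in one line:
\begin{enumerate}[(i)]
\item From $waw^{-1}=vav$ it is $vw$, not $wv$, that centralizes $a$. So $vw\in C_W(a)=W_{N^*(a)}$, while $wbw^{-1}=b$ gives $w\in W_{N^*(b)}$, both by Lemma~\ref{finite_star_base_lemma}(1).
\item Since $v\notin N^*(a)\cup N^*(b)$, consider the homomorphism $W\to\mathbb{Z}/2\mathbb{Z}$ sending $v$ to the nontrivial element and every other vertex to $0$. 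It kills both $W_{N^*(a)}$ and $W_{N^*(b)}$, hence kills $v=(vw)w^{-1}$, a contradiction.
\end{enumerate}
You should also add a sentence that $\pi_{(v,C)}\in\mrm{Spe}(W)$. A finite clique meeting $C$ lies in $C\cup N(v)$, so $\pi_{(v,C)}$ sends its clique product to the $v$-conjugate. Since every involution is conjugate to a clique product, this suffices.

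Your stated fallback would not help. Theorem~\ref{density_theorem} assumes star-connectedness and says nothing about its necessity.
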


    \begin{corollary} Let $W = W(\Gamma)$ be a countable right-angled Coxeter group. Then $\mrm{Aut}(W)$ is inner-by-graph, i.e., $\mrm{Aut}(W) = \mrm{Inn}(W) \rtimes \mrm{Aut}(\Gamma)$, if and only if $\Gamma$ has the star-property (cf.\ \ref{def_starconnected}), is star-connected, and has the finite-star-base property.
    \end{corollary}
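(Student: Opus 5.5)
The corollary will follow by combining the Tits decomposition with Theorem~\ref{theorem_spe_inn} and Fact~\ref{fact_F_gamma_star_property}. By Tits' result (in the form recalled before Theorem~\ref{theorem_tits_topological}) we have $\mrm{Aut}(W) = \mrm{Spe}(W) \rtimes F(\Gamma)$. We also have the inclusions $\mrm{Inn}(W) \leq \mrm{Spe}(W)$ and $\mrm{Aut}(\Gamma) \leq F(\Gamma)$. The first holds because conjugation by a generator $v$ maps each generator $u$ to a conjugate of itself, so it is special. The second holds because graph automorphisms permute the generators and therefore preserve the defining data of $F(\Gamma)$. The key observation is that ``inner-by-graph'' is then equivalent to the conjunction $\mrm{Spe}(W) = \mrm{Inn}(W)$ and $F(\Gamma) = \mrm{Aut}(\Gamma)$.

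For the backward direction of this equivalence, substitute the two equalities into the Tits decomposition. For the forward direction, suppose $\mrm{Aut}(W) = \mrm{Inn}(W)\rtimes \mrm{Aut}(\Gamma)$, and let $\alpha \in F(\Gamma)$. Write $\alpha = \iota \gamma$ with $\iota \in \mrm{Inn}(W)\leq \mrm{Spe}(W)$ and $\gamma\in\mrm{Aut}(\Gamma)\leq F(\Gamma)$. Then $\iota = \alpha\gamma^{-1} \in \mrm{Spe}(W)\cap F(\Gamma) = \{1\}$, using that the Tits product is semidirect. Hence $\alpha = \gamma \in \mrm{Aut}(\Gamma)$.

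The same argument applied to $\beta\in\mrm{Spe}(W)$ gives $\beta=\iota\gamma$ with $\gamma = \iota^{-1}\beta \in \mrm{Spe}(W)\cap F(\Gamma)=\{1\}$, so $\beta \in \mrm{Inn}(W)$. Both equalities therefore hold. The only delicate point is uniqueness of factorization in the Tits decomposition, which is exactly the statement that $\mrm{Spe}(W) \cap F(\Gamma)$ is trivial.

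Finally, I translate the two equalities into graph-theoretic conditions. By Theorem~\ref{theorem_spe_inn} (equivalence of (1) and (3)), $\mrm{Spe}(W) = \mrm{Inn}(W)$ holds if and only if $\Gamma$ is star-connected and has the finite-star-base property. By Fact~\ref{fact_F_gamma_star_property}, $F(\Gamma) = \mrm{Aut}(\Gamma)$ holds if and only if $\Gamma$ has the star-property. Combining these yields the corollary.

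I expect no step to be a genuine obstacle, since the heavy lifting is in Theorem~\ref{theorem_spe_inn}. The main thing to check is that Fact~\ref{fact_F_gamma_star_property} is stated for arbitrary countable (possibly infinite) $\Gamma$ and not only for finite graphs. If it were only stated for finite graphs, I would reprove it directly: an element of $F(\Gamma)$ permutes the conjugacy classes of generators, and the star-property is exactly what forces it to come from a graph automorphism.
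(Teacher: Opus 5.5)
Your proof is correct and follows the route the paper intends: it combines Tits' decomposition (Fact~\ref{fact_tits}) with Theorem~\ref{theorem_spe_inn} and Fact~\ref{fact_F_gamma_star_property}. The paper states the latter for arbitrary graphs, so your finiteness caveat is unnecessary, and your argument from $\mrm{Spe}(W)\cap F(\Gamma)=\{1\}$ supplies the step the paper leaves tacit, namely that inner-by-graph is equivalent to $\mrm{Spe}(W)=\mrm{Inn}(W)$ together with $F(\Gamma)=\mrm{Aut}(\Gamma)$. One small point: $\mrm{Inn}(W)\leq\mrm{Spe}(W)$ holds directly because every inner automorphism sends every involution to a conjugate of itself, so no check on generators is needed.
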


    Finally, we believe that it is worth noting that, for star-connected $\Gamma$, the finite-star-base
property is equivalent to $\mrm{Spe}(W(\Gamma))$ being countable. Indeed, if
$\mrm{Spe}(W(\Gamma))$ is countable, then, since $\mrm{Spe}(W(\Gamma))$ is a Polish group, it is discrete
by the Baire category theorem, so $\mrm{Inn}(W(\Gamma))$ is closed and $\Gamma$
has the finite-star-base property by Theorem~\ref{theorem:inn_closed}.
Conversely, if $\Gamma$ has the finite-star-base property, then
$\mrm{Inn}(W(\Gamma))$ is closed, hence equal to $\mrm{Spe}(W(\Gamma))$ by
Theorem~\ref{theorem_spe_inn}, and so $\mrm{Spe}(W(\Gamma))$ is countable.

\medskip

Concerning the structure of the paper, in Section \ref{sec_preliminaries}, we recall some basic facts and results about right-angled Coxeter groups. In Section \ref{section_tits_topological}, we prove that Tits' decomposition is topological (Theorem \ref{theorem_tits_topological}). In Section \ref{Section_inn_closed}, we introduce the finite-star-base property and prove Theorem \ref{theorem:inn_closed}. In Section \ref{sec_Spe=Inn}, we characterize when $\mrm{Inn}(W) = \mrm{Spe}(W)$, proving Theorems \ref{density_theorem} and \ref{theorem_spe_inn}. Section \ref{section_main_theorem} proves Theorem \ref{thm:main}.


\section{Preliminaries}\label{sec_preliminaries}

\begin{notation}\label{N(v)_notation} A graph $\Gamma = (V(\Gamma), R_\Gamma)$ is a set with a binary irreflexive and symmetric relation. Given $v \in V(\Gamma)$, we denote by  $N_{\Gamma}(v) = N(v)$ the set $\{u \in V(\Gamma) : v R_\Gamma u\}$ and by $N^*_{\Gamma}(v) = N^*(v)$ the set $\{v\} \cup N_{\Gamma}(v)$.
\end{notation}

Given a graph $\Gamma = (V(\Gamma), R_\Gamma)$, we denote by $W(\Gamma)$ the associated right-angled Coxeter group, that is, the group given by the following presentation: the generating set of $W(\Gamma)$ is $V(\Gamma)$, each element in $V(\Gamma)$ has order $2$, and for each edge $\{v, u\}$ of the graph $\Gamma$ we impose the relation that $u$ and $v$ commute. 

A fundamental result of Tits \cite{tits} gives an explicit description of $\mrm{Aut}(W(\Gamma))$ as a semidirect product of two subgroups of $\mrm{Aut}(W(\Gamma))$, namely $\mrm{Spe}(W(\Gamma))$ and $F(\Gamma)$.

	\begin{definition}\label{spe_def} Let $W$ be a right-angled Coxeter group. We denote by $\mrm{Spe}(W)$ the set of automorphisms $\alpha \in \mrm{Aut}(W)$ such that for every involution $h \in W$ there exists $g \in W$ such that $\alpha(h) = ghg^{-1}$.
\end{definition}

	\begin{definition}\label{def_FGamma} Let $\Gamma$ be a graph. We think of the set of finite subsets of $\Gamma$ as a $GF(2)$-vector space (the field with $2$ elements) $T(\Gamma) = (\mathcal{P}_{fin}(\Gamma), \triangle, \cdot)$ by letting:
\begin{enumerate}[(1)]
\item $S_1 \triangle S_2 = (S_1 - S_2) \cup (S_2 - S_1)$;
\item $0 \cdot S = \emptyset$;
\item $1 \cdot S = S$.
\end{enumerate}
We denote by $F(\Gamma)$ the set of linear automorphisms of $T(\Gamma)$ which send finite cliques of $\Gamma$ to finite cliques of $\Gamma$.
\end{definition}

\begin{fact}[Tits \cite{tits}]\label{fact_tits}
Let $\Gamma$ be a graph. Then: $$\mrm{Aut}(W(\Gamma)) = \mrm{Spe}(W(\Gamma)) \rtimes F(\Gamma).$$
\end{fact}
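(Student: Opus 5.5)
The plan is to realize $F(\Gamma)$ as the image of $\mrm{Aut}(W(\Gamma))$ acting on the abelianization, identify $\mrm{Spe}(W(\Gamma))$ as the kernel of that action, and then write down an explicit splitting. Write $W = W(\Gamma)$, and for a finite clique $C$ of $\Gamma$ let $w_C = \prod_{c \in C} c$. This product is well defined because the elements of $C$ pairwise commute, and $w_C$ is an involution when $C \neq \emptyset$.

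The one genuinely group-theoretic input, and the step I expect to be the main obstacle, is the following. \emph{Every involution of $W$ is conjugate to $w_C$ for some nonempty finite clique $C$, and $C$ is uniquely determined by this conjugacy class.}
\begin{enumerate}[(1)]
\item For finite $\Gamma$ this is the classical fact that a finite subgroup of a Coxeter group is conjugate into a finite special subgroup. Here the finite special subgroups are exactly the $\langle C \rangle \cong (\mathbb{Z}/2)^C$ with $C$ a clique.
\item For infinite $\Gamma$ I reduce to the finite case. Any $h \in W$ lies in $W(\Delta)$ for some finite induced subgraph $\Delta$. There is a retraction $W(\Gamma) \to W(\Delta)$ killing the vertices outside $\Delta$, so $W(\Delta)$ embeds in $W(\Gamma)$ and conjugacy in $W(\Delta)$ suffices.
\item Uniqueness of $C$ follows by passing to the abelianization. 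This is $W^{ab} \cong \bigoplus_{v \in V(\Gamma)} \mathbb{Z}/2$, which I identify with $T(\Gamma)$. Under this identification $w_C$ maps to $C$, and conjugate elements have the same image.
\end{enumerate}

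Next I define the projection. Every $\alpha \in \mrm{Aut}(W)$ induces a linear automorphism $\pi(\alpha)$ of $W^{ab} = T(\Gamma)$. Fix a finite clique $C$. Then $\alpha(w_C)$ is an involution, hence conjugate to some $w_{C'}$, so $\pi(\alpha)(C) = C'$ is again a finite clique. Thus $\pi \colon \mrm{Aut}(W) \to F(\Gamma)$ is a homomorphism.

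I then identify the kernel of $\pi$.
\begin{enumerate}[(1)]
\item Suppose $\pi(\alpha) = \mrm{id}$, and let $h$ be an involution. Write $h$ as a conjugate of some $w_C$. Then $\alpha(w_C)$ is conjugate to $w_{C'}$ with $C' = \pi(\alpha)(C) = C$, so $\alpha(h)$ is conjugate to $h$. Hence $\alpha \in \mrm{Spe}(W)$.
\item Conversely, suppose $\alpha \in \mrm{Spe}(W)$. Each generator $v$ is an involution mapped to a conjugate of itself. So $\pi(\alpha)$ fixes every singleton $\{v\}$ and is therefore the identity.
\end{enumerate}
Hence $\ker \pi = \mrm{Spe}(W)$, which in particular shows that $\mrm{Spe}(W)$ is a normal subgroup.

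Finally I build a section $s \colon F(\Gamma) \to \mrm{Aut}(W)$ by setting $s(\varphi)(v) = w_{\varphi(\{v\})}$ on generators. To see that this defines an endomorphism I check the relations.
\begin{enumerate}[(1)]
\item Squares: each $w_{\varphi(\{v\})}$ is an involution, so $s(\varphi)(v)^2 = 1$.
\item Commutation: let $u, v$ be adjacent, and set $A = \varphi(\{u\})$ and $B = \varphi(\{v\})$. Then $A$, $B$ and $A \triangle B = \varphi(\{u,v\})$ are all cliques.
\begin{itemize}
\item Elements of $A \setminus B$ and $B \setminus A$ are adjacent, since both sets lie in the clique $A \triangle B$.
\item Elements of $A \cap B$ are adjacent to everything in $A \cup B$, since $A$ and $B$ are cliques.
\item Hence $A \cup B$ is a clique, and so $w_A$ and $w_B$ commute.
\end{itemize}
\end{enumerate}
The same reasoning, by induction on $|D|$, gives $s(\varphi)(w_D) = w_{\varphi(D)}$ for every finite clique $D$. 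Checking on generators, this yields $s(\varphi\psi) = s(\varphi)s(\psi)$ and $s(\mrm{id}) = \mrm{id}$. Therefore each $s(\varphi)$ is an automorphism with inverse $s(\varphi^{-1})$, and $\pi \circ s = \mrm{id}$. Identifying $F(\Gamma)$ with $s(F(\Gamma))$, we obtain $\mrm{Aut}(W) = \mrm{Spe}(W) \rtimes F(\Gamma)$.
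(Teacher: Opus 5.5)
Your proof is correct, and it is essentially the standard argument behind Tits' result and the way the paper uses it. Your $\pi$ is the action on $W^{ab}\cong T(\Gamma)$, whose kernel is $\mrm{Spe}(W)$ exactly as invoked in Proposition~\ref{Spe_closed}, and your section $s$ is exactly the embedding $\varphi\mapsto\hat{\varphi}$ of Remark~\ref{F_embedding_remark}. The only point to make explicit is that for infinite $\Gamma$, $F(\Gamma)$ must be read as the linear automorphisms mapping the set of finite cliques \emph{onto} itself. For example, on $V=\mathbb{Z}$ with edges $\{n,n+1\}$ for $n\geq 0$, the shift $n\mapsto n+1$ sends cliques into cliques but $\{0,1\}$ has preimage $\{-1,0\}$. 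Your step ``with inverse $s(\varphi^{-1})$'' uses this reading, and $\pi(\alpha)$ satisfies it because $\pi(\alpha)^{-1}=\pi(\alpha^{-1})$ also maps cliques to cliques.
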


\begin{definition}\label{partial_conj} Let $\Gamma$ be a graph, $s \in \Gamma$ and $C$ a union of connected components of $\Gamma \setminus N^*(s)$. We define an automorphism (cf. Fact \ref{partial_conj_fact}) $\pi_{(s, C)}$ of $W(\Gamma)$ as follows:
$$\begin{cases} \pi_{(s, C)}(t) = sts \;\;\;\; \text{ if } t \in C \\
			  \pi_{(s, C)}(t) = t \;\;\;\;\;\;\; \text{ otherwise. }
\end{cases} $$
Automorphisms of the form $\pi_{(s, C)}$ are called {\em partial conjugations}.
\end{definition}

	\begin{fact}[\cite{muhlherr}]\label{partial_conj_fact} Let $\Gamma$ be a graph. Then the partial conjugations (cf. Def. \ref{partial_conj}) are automorphisms of $W(\Gamma)$ and, if $\Gamma$ is {\em finite}, then $\mrm{Spe}(W(\Gamma))$ is generated by them.
\end{fact}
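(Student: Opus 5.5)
The statement has two parts: partial conjugations are automorphisms, and for finite $\Gamma$ they generate $\mrm{Spe}(W(\Gamma))$. I would treat them separately. The first part is a direct check on the presentation. The second is a length-reduction (peak-reduction) argument in the spirit of Mühlherr.

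\emph{Partial conjugations are automorphisms.} Define $\pi=\pi_{(s,C)}$ on generators as in Definition \ref{partial_conj} and check the defining relations of $W(\Gamma)$.
\begin{enumerate}[(1)]
\item Involutions are preserved: $(sts)^2=1$.
\item Let $\{t,u\}$ be an edge. If $t,u$ are both in $C$ or both outside $C$, the relation is clearly preserved, since conjugation by $s$ preserves it.
\item Suppose $t\in C$ and $u\notin C$. Since $C$ is a union of connected components of $\Gamma\setminus N^*(s)$ and $u$ is adjacent to $t$, we must have $u\in N^*(s)$. Hence $u$ commutes with $s$ (or $u=s$, which is impossible since $s$ is adjacent to no vertex of $C$). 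Therefore $u$ commutes with $sts$.
\end{enumerate}
So $\pi$ extends to an endomorphism. Since $s\notin C$, we have $\pi(s)=s$, and then $\pi\circ\pi$ fixes every generator. Thus $\pi$ is an involutive automorphism. Each $\pi(t)$ is a conjugate of $t$, so $\pi\in\mrm{Spe}(W)$; this uses the standard fact that involutions of a right-angled Coxeter group are conjugates of elements of finite cliques.

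\emph{Generation for finite $\Gamma$.} First, inner automorphisms are products of partial conjugations. Indeed, conjugation by $s$ equals the product of the $\pi_{(s,C)}$ over all components $C$ of $\Gamma\setminus N^*(s)$, because $s$ commutes with $N^*(s)$.

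Now take $\alpha\in\mrm{Spe}(W)$ and write $\alpha(v)=g_vvg_v^{-1}$ for $v\in V(\Gamma)$. Choose each $g_v$ of minimal length, so it ends in no letter of $N^*(v)$ and the word $g_vvg_v^{-1}$ is reduced. Define the complexity $\|\alpha\|=\sum_{v}\ell(\alpha(v))$, which is finite since $\Gamma$ is finite. I would induct on $\|\alpha\|$. If $\|\alpha\|=|V(\Gamma)|$, then all $g_v=1$ and $\alpha=\mrm{id}$. Otherwise the goal is to find $s$ and a union of components $C$ such that $\|\pi_{(s,C)}\alpha\|<\|\alpha\|$.

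The candidate is as follows. Let $s$ be the first letter of some nontrivial $g_v$, so $\alpha(v)$ begins with $s$. Let $C$ be the set of components of $\Gamma\setminus N^*(s)$ containing a vertex $u$ whose $g_u$ begins with $s$ (up to commutation).

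\emph{Main obstacle.} The hard step is to show that this choice works. One needs that for every $u$ in such a component, $\alpha(u)$ begins and ends with $s$, so that conjugating by $s$ shortens it by $2$. One also needs that vertices outside $C$ and outside $N^*(s)$ do not get lengthened. I would use the relations $\alpha(u)\alpha(w)=\alpha(w)\alpha(u)$ for adjacent $u,w$, together with the structure of commuting pairs of conjugates of generators in right-angled Coxeter groups. Concretely, two commuting conjugates $g_uug_u^{-1}$ and $g_wwg_w^{-1}$ with $u\neq w$ force $g_u^{-1}g_w$ into the product of the centralizers, which are generated by stars. This propagates the property ``$g$ begins with $s$'' along edges of $\Gamma\setminus N^*(s)$.

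There is a further subtlety. Without additional care, conjugating a vertex whose conjugator does not begin with $s$ would lengthen it. This is avoided by choosing $s$ and $v$ with $\ell(g_v)$ maximal and arguing that, within a single component, the conjugators either all begin with $s$ or none do. This is where Mühlherr's combinatorics (Tits' solution to the word problem via deletion and exchange conditions) would be invoked.
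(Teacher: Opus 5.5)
\textbf{Verdict.} The first half of your proposal is correct, but the generation half has a genuine gap, and the reduction recipe you sketch for it fails on a concrete example.

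\textbf{Automorphism part.} The relation check, $\pi_{(s,C)}(s)=s$, and $\pi_{(s,C)}^2=\mathrm{id}$ are all correct. Membership in $\mathrm{Spe}(W)$ needs one more line than ``each generator goes to a conjugate''. Write an involution as $g c_K g^{-1}$, where $c_K$ is the product of a finite clique $K$. If $K$ meets $C$, then $K\setminus C\subseteq N(s)$, so $\pi_{(s,C)}(c_K)=s c_K s$.

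\textbf{Generation part.} The paper gives no proof of this Fact; it cites M{\"u}hlherr. So the generation statement for finite $\Gamma$ is the whole content of M{\"u}hlherr's theorem. Your proposal does not supply it: the step you call the main obstacle is exactly that theorem, and you defer it to ``M{\"u}hlherr's combinatorics''.

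\textbf{The mechanism you describe is wrong.} Post-composing with $\pi_{(s,C)}$ does not conjugate $\alpha(u)$ by $s$ for $u\in C$ while leaving the other images alone. It replaces every letter $c\in C$ by $scs$ inside every word $\alpha(u)$, and it fixes $s$. So the outer letters $s$ of $\alpha(u)=g_uug_u^{-1}$ are not simply stripped off: whether $\ell(\alpha(u))$ drops depends on where the remaining letters of $g_u$ lie. Images $\alpha(w)$ with $w\notin C$ also change whenever they contain letters of $C$.

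\textbf{Counterexample to your selection rule.} Let $\Gamma$ have vertices $s,t,x,y$ and the single edge $\{x,y\}$. Let $\alpha=\pi_{(s,\{t\})}\circ\pi_{(t,\{x,y\})}\circ\pi_{(s,\{x,y\})}$. Then $\alpha(x)=tsxst$, $\alpha(y)=tsyst$, $\alpha(t)=sts$ and $\alpha(s)=s$, so $\|\alpha\|=14$.
\begin{enumerate}[(1)]
\item Your rule (take $v$ with $\ell(g_v)$ maximal, then its first letter) selects $t$ and $C=\{x,y\}$.
\item All your heuristic conditions hold. Both conjugators in the component $\{x,y\}$ begin with $t$, the one in $\{s\}$ does not, and $\alpha(x)$, $\alpha(y)$ begin and end with $t$.
\item Yet $\pi_{(t,\{x,y\})}(tsxst)=tstxtst$, so $\|\pi_{(t,\{x,y\})}\circ\alpha\|=18$.
\item The other partial conjugations based at $t$ do no better: $\mathrm{ad}(t)$ gives $14$ and $\pi_{(t,\{s\})}$ gives $18$.
\item Pre-composing does not help either, since $\alpha\circ\pi_{(t,\{x,y\})}$ makes $\alpha(x)$ of length $11$.
\end{enumerate}
The move that actually reduces complexity is $\pi_{(s,\{t\})}$, giving $\|\cdot\|=12$. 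It is based at the first letter of the \emph{shorter} conjugator $g_t=s$, not the longest one.

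If you want an operation that changes only the images $\alpha(u)$ with $u\in C$, you must pre-compose. Then $\alpha\circ\pi_{(r,C)}$ conjugates those images by the reflection $\alpha(r)$, not by a single letter. Either way, both the choice of $(s,C)$ and the length bookkeeping have to be redone. As written, the generation statement remains unproved.
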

 
	\begin{definition}\label{def_starconnected} We say that a graph $\Gamma$ is star-connected if for every $v \in V(\Gamma)$, the induced subgraph $\Gamma \setminus N^*(v)$ is connected. We say that $\Gamma$ has the star-property if for every $v \neq u \in V(\Gamma)$ we have that $N^*(v) \not\subseteq N^*(u)$.
\end{definition}

	\begin{fact}[{\cite[Comm. 3]{castella}}]\label{fact_F_gamma_star_property} Let $\Gamma$ be a graph. Then the following are equivalent:
	\begin{enumerate}[(1)]
	\item $F(\Gamma) = \mrm{Aut}(\Gamma)$;
	\item $\Gamma$ has the star-property.
\end{enumerate}
\end{fact}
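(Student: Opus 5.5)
The statement is an equivalence, and the two directions need different tools: a direct construction for (1)$\Rightarrow$(2), and an invariant characterization of singletons for (2)$\Rightarrow$(1). Throughout, I identify $\mrm{Aut}(\Gamma)$ with the subgroup of $F(\Gamma)$ of linear maps induced by permutations of $V(\Gamma)$. A graph automorphism sends finite cliques to finite cliques, so this inclusion always holds.

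\emph{(1)$\Rightarrow$(2), by contrapositive.} Suppose $u\neq v$ and $N^*(v)\subseteq N^*(u)$. Define the linear map $\tau$ on $T(\Gamma)$ by $\tau(\{v\})=\{u,v\}$ and $\tau(\{w\})=\{w\}$ for $w\neq v$. Since $\tau^2=\mathrm{id}$, it is a linear automorphism. For a finite clique $C$, either $v\notin C$, in which case $\tau(C)=C$. Or $v\in C$, in which case $\tau(C)=C\triangle\{u\}$. If $u\in C$ this removes $u$, leaving a clique. If $u\notin C$, then $C\subseteq N^*(v)\subseteq N^*(u)$, so $C\cup\{u\}$ is a clique. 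Hence $\tau\in F(\Gamma)$. But $\tau$ sends a singleton to a two-element set, so it is not induced by a permutation, and $F(\Gamma)\neq\mrm{Aut}(\Gamma)$.

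\emph{(2)$\Rightarrow$(1).} Let $\varphi\in F(\Gamma)$. It suffices to show that $\varphi$ maps singletons to singletons. Applying the same argument to $\varphi^{-1}$ then shows that $\varphi$ permutes vertices. Since $\{u,v\}$ is a clique exactly when $u$ is adjacent to $v$, and $\varphi,\varphi^{-1}$ preserve cliques, that permutation is a graph automorphism.

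To detect singletons, I will use an invariant that $\varphi$ visibly preserves. For a nonzero finite clique $A$, let
$$X(A)=\{B \text{ finite clique} : A\triangle B \text{ is a clique}\}.$$
Because $\varphi$ and $\varphi^{-1}$ preserve both cliques and $\triangle$, we get $\varphi(X(A))=X(\varphi(A))$. A direct computation gives $X(\{v\})=\{B \text{ clique} : B\subseteq N^*(v)\}$, so $\{v\}$ is recovered as the unique vertex $x$ for which every element of $X(\{v\})$ is contained in $N^*(x)$. Under the star-property, the sets $X(\{v\})$ for distinct $v$ are pairwise $\subseteq$-incomparable.

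I will then aim to show that when $|A|\geq 2$, $X(A)$ has a shape that no $X(\{v\})$ can have. Pick distinct $a,b\in A$. The star-property gives a vertex $w\in N^*(a)\setminus N^*(b)$, and then $w\notin A$. The idea is to use $w$ to build elements of $X(A)$, such as $(A\setminus\{b\})\cup\{w\}$ adjusted so that its symmetric difference with $A$ is a clique, which violate whatever closure property the sets $X(\{v\})$ enjoy. The candidate property: $X(\{v\})$ is closed under taking subsets, because it consists of all cliques inside $N^*(v)$. One can check that $X(A)$ is not closed under subsets once $|A|\geq 2$ and the star-property holds, but only by exhibiting an element $B\in X(A)$ and a subset $B'\subseteq B$ with $A\triangle B'$ not a clique. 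Unfortunately, "closed under subsets" is not obviously $\varphi$-invariant, since $\varphi$ does not preserve inclusion.

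The main obstacle is therefore to find an invariant that is genuinely preserved by $\varphi$. My first attempt will replace inclusion by the relation "$A\triangle B$ is a clique", which $\varphi$ does preserve, and characterize singletons as those $A$ for which $X(A)$ is maximal, or for which all pairs in $X(A)$ are related in this sense. The star-property would supply the witnesses $w$ that break this for $|A|\geq 2$. If that fails, my fallback is to follow Castella's original argument \cite{castella}.
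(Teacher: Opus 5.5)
There is a genuine gap, in (2)$\Rightarrow$(1). Your (1)$\Rightarrow$(2) argument is correct: the map $\tau$ is the standard transvection $v\mapsto uv$. Your reduction of (2)$\Rightarrow$(1) to showing that each $\varphi\in F(\Gamma)$ sends singletons to singletons is also correct. But that step is never proved: you list candidate invariants and then defer to Castella.

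Moreover, the one concrete claim you make toward it is false: $X(A)$ \emph{is} closed under subsets for every finite clique $A$. Write $L(A)=\bigcap_{a\in A}N^*(a)$, which contains $A$. Then $X(A)=\{B \text{ finite clique}: B\subseteq L(A)\}$.
\begin{itemize}
\item If $B\in X(A)$, $x\in B\setminus A$ and $a\in A$, then $x$ and $a$ lie together in the clique $B$ (if $a\in B$) or in the clique $A\triangle B$ (if $a\notin B$). So $x\in N^*(a)$.
\item Conversely, if $B\subseteq L(A)$ is a clique, then $A\cup B$ is a clique, hence so is $A\triangle B\subseteq A\cup B$.
\end{itemize}
In particular, any $B$ containing your witness $w\in N^*(a)\setminus N^*(b)$ lies outside $X(A)$, since $w\notin L(A)$. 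So the witnesses you planned to build cannot exist.

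The missing observation concerns inclusion. $\varphi$ does not preserve inclusion between \emph{elements} of $T(\Gamma)$, but it does preserve inclusion between the \emph{sets} $X(A)\subseteq T(\Gamma)$. This is simply because $\varphi$ is a bijection of $T(\Gamma)$ with $\varphi(X(A))=X(\varphi(A))$. By the formula above, $X(A)\subseteq X(A')$ if and only if $L(A)\subseteq L(A')$, since $\{x\}\in X(A)$ for each $x\in L(A)$.

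This closes the gap directly.
\begin{itemize}
\item Given $v$, let $A=\varphi(\{v\})$ and pick $a\in A$. Since $L(A)\subseteq N^*(a)$, we have $X(A)\subseteq X(\{a\})$.
\item Applying $\varphi^{-1}$ gives $X(\{v\})\subseteq X(C)$ for the nonempty clique $C=\varphi^{-1}(\{a\})$. That is, $N^*(v)\subseteq L(C)\subseteq N^*(c)$ for every $c\in C$.
\item The star-property forces every $c\in C$ to equal $v$, so $C=\{v\}$ and hence $\varphi(\{v\})=\{a\}$.
\end{itemize}
Equivalently, under the star-property the singletons are exactly the nonempty cliques $A$ for which $X(A)$ is $\subseteq$-maximal among the sets $X(A')$ with $A'\neq\emptyset$. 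This is your first candidate invariant, now justified.

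With this in place, the rest of your argument finishes the proof: apply the same reasoning to $\varphi^{-1}$, then read off adjacency from two-element cliques. The paper itself gives no proof of this fact and simply cites Castella, so there is no in-paper argument to compare against.
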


\begin{fact}[{\cite[Comm. 3]{castella}}]\label{spe_inn} Let $\Gamma$ be a {\em finite} graph. The following are equivalent:
	\begin{enumerate}[(1)]
	\item $\mrm{Spe}(W(\Gamma)) = \mrm{Inn}(W(\Gamma))$;
	\item $\Gamma$ is star-connected.
\end{enumerate}
\end{fact}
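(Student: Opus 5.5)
The forward inclusion $\mrm{Inn}(W(\Gamma)) \subseteq \mrm{Spe}(W(\Gamma))$ holds for every graph. An inner automorphism $x \mapsto gxg^{-1}$ sends each involution $h$ to $ghg^{-1}$, which is exactly the condition in Definition~\ref{spe_def}. So the content of Fact~\ref{spe_inn} is whether the reverse inclusion holds, and I will prove each direction separately, using Fact~\ref{partial_conj_fact} (available since $\Gamma$ is finite).

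\emph{(2)$\Rightarrow$(1).} Assume $\Gamma$ is star-connected. By Fact~\ref{partial_conj_fact}, $\mrm{Spe}(W(\Gamma))$ is generated by the partial conjugations $\pi_{(s,C)}$, so it suffices to show each of these is inner.

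Since $\Gamma \setminus N^*(s)$ is connected, a union $C$ of its components is either $\emptyset$ or all of $\Gamma \setminus N^*(s)$.

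\begin{enumerate}[(a)]
\item If $C=\emptyset$, then $\pi_{(s,C)}$ is the identity.
\item If $C=\Gamma\setminus N^*(s)$, then $\pi_{(s,C)}$ agrees with conjugation by $s$ on every generator.
\begin{itemize}
\item On $C$ this holds by definition.
\item On $N^*(s)$, conjugation by $s$ acts trivially: $s$ commutes with its neighbours, and $sss=s$.
\end{itemize}
Hence $\pi_{(s,C)}$ is inner in this case too.
\end{enumerate}

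Therefore $\mrm{Spe}(W(\Gamma)) \subseteq \mrm{Inn}(W(\Gamma))$.

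\emph{(1)$\Rightarrow$(2).} Suppose $\Gamma$ is not star-connected. Choose $s$ such that $\Gamma\setminus N^*(s)$ has two distinct components $C_1 \neq C_2$, and pick vertices $t_1\in C_1$ and $t_2\in C_2$. Put $\pi=\pi_{(s,C_1)} \in \mrm{Spe}(W(\Gamma))$, and suppose for contradiction that $\pi$ is conjugation by some $g\in W(\Gamma)$.

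\begin{enumerate}[(a)]
\item Since $\pi(t_2)=t_2$, the element $g$ centralizes $t_2$.
\item Since $\pi(t_1)=st_1s$, we have $gt_1g^{-1}=st_1s$, so $sg$ centralizes $t_1$.
\end{enumerate}

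Now I invoke the standard fact that in a right-angled Coxeter group the centralizer of a generator $u$ is the special subgroup $W(N^*(u))$. This follows from the normal form theorem (Tits' solution of the word problem). It gives
$$g\in W(N^*(t_2)) \quad\text{and}\quad sg\in W(N^*(t_1)).$$

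Let $\varepsilon: W(\Gamma)\to \mathbb{Z}/2$ be the homomorphism sending $s\mapsto 1$ and every other vertex to $0$. It is well defined because all defining relations are preserved.

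Since $t_i\notin N^*(s)$, the vertex $s$ is neither $t_i$ nor adjacent to $t_i$, so $s\notin N^*(t_i)$. Hence $\varepsilon$ vanishes on $W(N^*(t_1))$ and on $W(N^*(t_2))$. This gives
$$1=\varepsilon(s)=\varepsilon(sg)-\varepsilon(g)=0,$$
a contradiction. So $\pi$ is not inner, and $\mrm{Spe}(W(\Gamma))\neq\mrm{Inn}(W(\Gamma))$.

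The only nontrivial input beyond the results stated earlier is the centralizer description $C_W(u)=W(N^*(u))$; this is the step I expect to need care, and I would cite it (or derive it from reduced-word combinatorics) rather than reprove it. Note that (1)$\Rightarrow$(2) does not use finiteness of $\Gamma$, whereas (2)$\Rightarrow$(1) depends on it through the generation statement in Fact~\ref{partial_conj_fact}.
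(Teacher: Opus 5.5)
Your proof is correct. The paper does not prove Fact~\ref{spe_inn}; it imports it from \cite{castella}. So the useful comparison is with how the paper handles the same ingredients elsewhere.

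\textbf{Sufficiency.} Your direction (2)$\Rightarrow$(1) is the same mechanism as Claim~\ref{crucial_claim2} in the proof of Theorem~\ref{rank_two_local_conjugacy_thm}. There, Fact~\ref{partial_conj_fact} reduces everything to partial conjugations, and each one is checked to act like $\mrm{ad}(s)$ or the identity. The paper only needs this on the pair $\{a,b\}$. In your setting the whole complement $\Gamma\setminus N^*(s)$ is connected, so $C$ is all or nothing and the check works globally. For finite $\Gamma$, the Fact is also a special case of Theorem~\ref{theorem_spe_inn}, since $A=V(\Gamma)$ witnesses the finite-star-base property. That route, however, passes through the Density Theorem, which uses Fact~\ref{partial_conj_fact} on finite subgraphs plus the Helly lemma, and it cites \cite{pao2} for necessity. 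Your direct argument avoids all of this by applying Fact~\ref{partial_conj_fact} to $\Gamma$ itself.

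\textbf{Necessity.} Your direction (1)$\Rightarrow$(2) is a genuinely self-contained replacement for the paper's citation of \cite[Theorem~2.8]{pao2}. It uses only the centralizer formula $C_W(t)=W_{N^*(t)}$, which the paper records as Lemma~\ref{finite_star_base_lemma}(1), and the parity homomorphism $\varepsilon$. It is short and transparent.

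\textbf{Closing remark.} One small point concerns your claim that (1)$\Rightarrow$(2) does not use finiteness. The step $\pi_{(s,C_1)}\in\mrm{Spe}(W)$ is left unjustified in your write-up. For finite $\Gamma$ it follows from Fact~\ref{partial_conj_fact}, which is a finite-rank statement. In arbitrary rank you need a separate, easy check that partial conjugations are special:
\begin{enumerate}[(a)]
\item every involution is conjugate to a clique product $\prod_{u\in K}u$;
\item a finite clique $K$ meeting $C$ lies in $C\cup N^*(s)$, because a vertex of $K$ outside $N^*(s)$ is adjacent to a vertex of $C$ and so lies in the same component;
\item hence $\pi_{(s,C)}$ sends such a product either to itself or to its $s$-conjugate.
\end{enumerate}
With that check added, your remark holds.
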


	As was remarked in \cite[Theorem~2.8(c)]{pao2}, in the general case (i.e., allowing infinite rank Coxeter groups) the star-connectedness of $\Gamma$ is a necessary but not sufficient condition for $\mrm{Spe}(W(\Gamma)) = \mrm{Inn}(W(\Gamma))$. We will look more closely at the problem of giving a graph-theoretic characterization of $\mrm{Spe}(W(\Gamma)) = \mrm{Inn}(W(\Gamma))$ in Section~\ref{sec_Spe=Inn}.

\begin{remark}\label{F_embedding_remark} The group $F(\Gamma)$ is naturally identified with a subgroup of $\mrm{Aut}(W(\Gamma))$: to $f \in F(\Gamma)$ we associate the automorphism $\hat{f} \in \mrm{Aut}(W(\Gamma))$ determined on the standard generators by $
\hat{f}(v) = \prod_{u \in f(\{v\})} u \qquad (v \in V(\Gamma))$,
the product being taken over the finite clique $f(\{v\})$. We will identify $f \in F(\Gamma)$ with $\hat{f} \in \mrm{Aut}(W(\Gamma))$.
\end{remark}

\section{The Tits decomposition of $\mrm{Aut}(W)$ is topological}\label{section_tits_topological}

	Throughout this section, $W = W(\Gamma)$ is a countable right-angled Coxeter group. 

    \begin{definition}
        Let $G$ be a topological group with a decomposition as a semidirect product $G = K \rtimes H$. We say this decomposition is \textit{topological} if the topology on $G$ agrees with the product topology on $K \times H$, where both $K$ and $H$ have their induced subspace topologies.
    \end{definition}
    \begin{remark}
        If a Hausdorff topological group $G$ has a topological decomposition $K \rtimes H$, then both $K$ and $H$ are closed subgroups of $G$.
    \end{remark}

    The purpose of this section is to prove Theorem \ref{theorem_tits_topological}, i.e., that the Tits decomposition in Fact \ref{fact_tits} is topological, where $\mrm{Aut}(W)$ is given the topology of pointwise convergence.

	\begin{proposition}\label{Spe_closed} $\mrm{Spe}(W)$ is closed in $\mrm{Aut}(W)$.
\end{proposition}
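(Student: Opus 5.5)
We show the complement of $\mrm{Spe}(W)$ in $\mrm{Aut}(W)$ is open. The basic observation is that membership in $\mrm{Spe}(W)$ is decided on the standard generators. Recall that every involution of $W(\Gamma)$ is conjugate to $\prod_{u\in K}u$ for a unique finite clique $K$. An automorphism $\alpha$ lies in $\mrm{Spe}(W)$ iff it preserves every conjugacy class of involutions. Since every involution is conjugate to a product of pairwise commuting generators, it suffices that each $\alpha(v)$, $v\in V(\Gamma)$, be conjugate to $v$. Indeed, if $K$ is a finite clique, then $\alpha(\prod_K u)=\prod_K\alpha(u)$ is a product of pairwise commuting involutions each conjugate to a single generator. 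We still need this product to be conjugate to $\prod_K u$. The cleanest route is through the Tits decomposition, Fact~\ref{fact_tits}: write $\alpha=\sigma f$ with $\sigma\in\mrm{Spe}(W)$ and $f\in F(\Gamma)$. Then $\alpha(v)=\sigma(\hat f(v))$ is conjugate to $\prod_{u\in f(\{v\})}u$. By uniqueness of the clique representing a conjugacy class, $\alpha(v)$ is conjugate to $v$ iff $f(\{v\})=\{v\}$. Hence $\alpha\in\mrm{Spe}(W)$ iff $f$ fixes every singleton iff $f=\mathrm{id}$, since singletons span $T(\Gamma)$. This gives the characterization: $\alpha\in\mrm{Spe}(W)$ iff $\alpha(v)$ is conjugate to $v$ for all $v\in V(\Gamma)$.

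Closedness now follows quickly. Suppose $\alpha\notin\mrm{Spe}(W)$. Pick $v$ with $\alpha(v)$ not conjugate to $v$. The basic open set $U=\{\beta\in\mrm{Aut}(W):\beta(v)=\alpha(v)\}$ contains $\alpha$, and every $\beta\in U$ sends $v$ to an element not conjugate to $v$, so $U\cap\mrm{Spe}(W)=\emptyset$. Equivalently, $\mrm{Spe}(W)=\bigcap_{v}\{\beta:\beta(v)\in v^W\}$, and each set in this intersection is clopen, because membership depends only on the single value $\beta(v)$ in the discrete group $W$.

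The main obstacle is the characterization step. It needs the standard fact that each involution of $W(\Gamma)$ is conjugate to the product over a \emph{unique} finite clique. This is where countability or infinitude of $\Gamma$ could matter, but uniqueness reduces to finitely many generators. An involution lies in a finitely generated standard parabolic subgroup $W(\Delta)$, and the retraction $W(\Gamma)\to W(\Delta)$ killing the generators outside $\Delta$ lets us apply the finite-rank result. Alternatively, one can map to the abelianization $(\mathbb Z/2)^{(V)}$: conjugate elements have the same image, and the image of $\prod_K u$ determines $K$. This abelianization argument settles uniqueness immediately, and I would use it.
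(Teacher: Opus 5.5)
Your proof is correct and is essentially the paper's argument. The paper simply notes that $\mrm{Spe}(W)$ is the kernel of the natural continuous projection $\pi:\mrm{Aut}(W)\to F(\Gamma)$ coming from the Tits decomposition, and your identity $\mrm{Spe}(W)=\bigcap_{v\in V(\Gamma)}\{\beta:\beta(v)\in v^W\}$ is exactly that kernel written out generator by generator; your observation that each set in the intersection is clopen (with the abelianization separating distinct clique products up to conjugacy) supplies the continuity of $\pi$, which the paper asserts without proof.
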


	\begin{proof} The natural map $\pi: \mrm{Aut}(W) \rightarrow F(\Gamma)$ is a continuous homomorphism whose kernel is exactly $\mrm{Spe}(W)$. 
	\end{proof}

	\begin{proposition}\label{Fgamma_closed} $F(\Gamma)$ is closed in $\mrm{Aut}(W)$.
\end{proposition}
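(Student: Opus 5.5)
We want to show that $F(\Gamma)$, identified via Remark~\ref{F_embedding_remark} with the subgroup $\{\hat f : f \in F(\Gamma)\}$ of $\mrm{Aut}(W)$, is closed in the pointwise convergence topology. The idea is to describe $F(\Gamma)$ inside $\mrm{Aut}(W)$ by conditions that each involve only finitely many values of an automorphism. Such conditions define closed sets.

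\textbf{Characterization.} An automorphism $\alpha$ lies in $F(\Gamma)$ if and only if, for every $v \in V(\Gamma)$, $\alpha(v)$ is a product $\prod_{u \in K_v} u$ over some finite clique $K_v$ of $\Gamma$. For the forward direction, $\hat f$ has this form with $K_v = f(\{v\})$. For the converse, suppose $\alpha$ sends every generator to such a clique product. Let $\rho: \mrm{Aut}(W) \to F(\Gamma)$ be the natural map from Proposition~\ref{Spe_closed}, i.e.\ the action on the abelianization $W/[W,W] \cong T(\Gamma)$, where the image of $\prod_{u\in K}u$ is $K$. Then $\rho(\alpha)$ sends $\{v\}$ to $K_v$, and $\hat{\rho(\alpha)}(v) = \prod_{u\in K_v} u = \alpha(v)$. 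So $\alpha = \hat{\rho(\alpha)} \in F(\Gamma)$. This also uses that a finite clique product does not depend on the order of its factors, which holds because the factors commute.

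\textbf{Closedness.} Let $C$ be the set of elements of $W$ of the form $\prod_{u\in K}u$ with $K$ a finite clique. Then
\[
F(\Gamma) = \bigcap_{v \in V(\Gamma)} \{\alpha \in \mrm{Aut}(W) : \alpha(v) \in C\}.
\]
Each set in this intersection is closed, and in fact clopen. Indeed, $W$ is discrete, and in the pointwise topology the evaluation map $\alpha \mapsto \alpha(v)$ is continuous into the discrete space $W$. The preimage of any subset of $W$, in particular of $C$, is therefore clopen. An intersection of closed sets is closed, so $F(\Gamma)$ is closed. (For the equality of sets, note that every $\alpha$ in the intersection is an automorphism by assumption, so the characterization above applies.)

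\textbf{Main point to check.} The only real work is the converse direction of the characterization: that an automorphism sending each generator to a clique product must equal $\hat f$ for $f$ its image in $F(\Gamma)$. This requires the identification of $\rho$ with the abelianization action, namely that the abelianization of $W(\Gamma)$ is the $GF(2)$-space on $V(\Gamma)$ and that the map $\mrm{Aut}(W) \to F(\Gamma)$ in Tits' decomposition is induced on it. I will take this from the construction underlying Fact~\ref{fact_tits}. If only the splitting is available, an alternative route is to prove that $\rho$ restricted to $F(\Gamma)$ is injective and that $\hat f$ is determined by $f$; combined with the clopen description, this again yields closedness.
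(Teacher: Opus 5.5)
Your proof is correct and follows the paper's argument: the paper likewise identifies $F(\Gamma)$ with the set of automorphisms sending every generator to a clique product, and writes this set as $\bigcap_{v \in V(\Gamma)} \mrm{ev}_v^{-1}(\mrm{CP})$, an intersection of clopen sets. Your abelianization argument for the converse inclusion makes explicit the step the paper dismisses as ``easy to see.''
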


	\begin{proof} Throughout, $F(\Gamma)$ is regarded as a subgroup of $\mrm{Aut}(W)$ via the identification of Remark~\ref{F_embedding_remark}, so that its elements are the automorphisms $\hat{f}$ with $\hat{f}(v) = \prod_{u \in f(\{v\})} u$ for $f \in F(\Gamma)$. Now, call an element $w \in W$ a {\em clique product} if $w = \prod_{u \in C} u$ for some finite clique $C \subseteq V(\Gamma)$ (the empty product being $e_W$), and write $\mrm{CP}_{V(\Gamma)} = \mrm{CP} \subseteq W$ for the set of all clique products (notice that this depends on the generating set $V(\Gamma)$ but this does not affect our argument). If $C$ is a clique, then $\prod_{u \in C} u$ is a reduced word, so $C = \mrm{sp}(w)$ is recovered as the support of $w$; in particular a clique product is determined by its support, and distinct cliques yield distinct elements. It is easy to see that
$$
F(\Gamma) = \{\alpha \in \mrm{Aut}(W) : \alpha(v) \in \mrm{CP} \text{ for every } v \in V(\Gamma)\} =: X.
$$

\noindent Equivalently, $F(\Gamma)$ is the set of automorphisms of $W$ that send every clique product to a clique product.

\smallskip \noindent It remains to show that $X$ is closed. Recall that $\mrm{Aut}(W)$ carries the topology of pointwise convergence, that is, the subspace topology inherited from the product space $W^W$, where $W$ is given the discrete topology. For each $v \in V(\Gamma)$ consider the evaluation map
\[
\mrm{ev}_v : \mrm{Aut}(W) \to W, \qquad \alpha \mapsto \alpha(v).
\]
This map is continuous: it is the restriction to $\mrm{Aut}(W)$ of the $v$-th coordinate projection $W^W \to W$, which is continuous by the very definition of the product topology. (Concretely, for every $w \in W$ the preimage $\mrm{ev}_v^{-1}(\{w\}) = \{\alpha \in \mrm{Aut}(W) : \alpha(v) = w\}$ is, by definition, a subbasic open set of the topology of pointwise convergence.) Since $W$ is discrete, every subset of $W$ is clopen; in particular $\mrm{CP} \subseteq W$ is clopen, and hence so is its preimage $\mrm{ev}_v^{-1}(\mrm{CP})$. Finally, for $\alpha \in \mrm{Aut}(W)$ we have $\alpha \in \mrm{ev}_v^{-1}(\mrm{CP})$ if and only if $\alpha(v) \in \mrm{CP}$, so unwinding the definition of $X$ gives
\[
X = \{\alpha \in \mrm{Aut}(W) : \alpha(v) \in \mrm{CP} \text{ for every } v \in V(\Gamma)\} = \bigcap_{v \in V(\Gamma)} \mrm{ev}_v^{-1}(\mrm{CP}),
\]
an intersection of closed sets, hence closed. Since $X = F(\Gamma)$, we are done.
	\end{proof}

\begin{lemma}\label{lemma_top_decomp}
    Suppose $G$ is a Polish group, and $H,K$ are closed subgroups such that $G = K \rtimes H$. Then this decomposition is topological.
\end{lemma}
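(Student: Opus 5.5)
Consider the multiplication map $m\colon K \times H \to G$, $m(k,h) = kh$, where $K \times H$ carries the product of the subspace topologies. Since $G = K \rtimes H$, every element of $G$ is uniquely a product $kh$ with $k \in K$, $h \in H$, so $m$ is a bijection. It is continuous because multiplication in $G$ is continuous. Saying that the decomposition is topological is the same as saying that $m$ is a homeomorphism. So the only thing to prove is that $m^{-1}$ is continuous.

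The plan is to get this from the open mapping theorem for Polish groups, which we apply to a suitable group structure on $K \times H$. Equip $K \times H$ with the semidirect product operation
\[
(k_1,h_1)(k_2,h_2) = (k_1 h_1 k_2 h_1^{-1},\, h_1 h_2).
\]
This operation is continuous, and so is inversion $(k,h)^{-1} = (h^{-1}k^{-1}h,\, h^{-1})$, because conjugation in $G$ is continuous and $K$ is normal. Hence $K \times H$ is a topological group in the product topology. With this structure, $m$ is exactly the group isomorphism witnessing $G = K \rtimes H$, since $k_1h_1k_2h_2 = (k_1 h_1k_2h_1^{-1})(h_1h_2)$.

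Next we check that $K \times H$ is Polish. Closed subgroups of a Polish group are Polish, so $K$ and $H$ are Polish, and a finite product of Polish spaces is Polish. We now have a continuous bijective homomorphism $m$ between Polish groups. Such a map is Borel, hence Baire measurable. By the standard automatic continuity and open mapping theorem (e.g.\ Pettis' theorem, or \cite[1.2.6]{Becker_Kechris_1996}), a continuous bijective homomorphism between Polish groups is a homeomorphism. One concrete route is to note that $m^{-1}$ is Borel, since the image of a Borel set under an injective continuous map between Polish spaces is Borel (Lusin--Souslin). A Borel homomorphism between Polish groups is then continuous by Pettis' theorem.

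The main obstacle is that continuity of $m^{-1}$ does not come for free: it really needs the Baire category (open mapping) input together with the Polishness of $K \times H$. Closedness of $K$ and $H$ enters exactly here, since it is what makes $K$ and $H$ Polish in the subspace topology. As an alternative, one could avoid the Pettis argument and show directly that the projection $G \to H$, $kh \mapsto h$, is continuous. This would follow from the quotient map $G \to G/K$ being open and the induced continuous bijection $H \to G/K$ being a homeomorphism. That last step again needs the open mapping theorem, so we prefer the first route.

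Finally, closedness of $K$ and $H$ in $G$ for Theorem \ref{theorem_tits_topological} is supplied by Propositions \ref{Spe_closed} and \ref{Fgamma_closed}. We will also need to note that $\mrm{Aut}(W)$ is Polish because $W$ is countable, so that the lemma applies.
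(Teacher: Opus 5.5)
Your proposal is correct and follows essentially the same route as the paper. You put the semidirect product structure with the product topology on $K \times H$, note that it is a Polish group because $K$ and $H$ are closed, and apply the open mapping theorem for Polish groups (\cite[Theorem 1.2.6]{Becker_Kechris_1996}) to the continuous bijective multiplication homomorphism; your formulas for the product and inverse match the paper's.
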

\begin{proof}
    Consider $(K \rtimes H, \mathcal{T}_{prod})$ where $\mathcal{T}_{prod}$ denotes the product topology and both $K$ and $H$ are equipped with their subspace topologies from $G$. Since both $K$ and $H$ are closed subspaces, both are Polish groups. Therefore \mbox{$(K \rtimes H, \mathcal{T}_{prod})$} is a Polish space; it is a Polish group because the group operations \begin{align*}
        (a_1,b_1) (a_2,b_2) &= (a_1 (b_1a_2 b_1^{-1}), b_1 b_2) \\
        (a,b)^{-1} &= (b^{-1} a^{-1} b, b^{-1})
    \end{align*}
    are continuous with respect to $\mathcal{T}_{prod}$. Then, the  multiplication map $(K \rtimes H, \mathcal{T}_{prod}) \to G$ is a continuous bijective group homomorphism between Polish groups, and so is a topological isomorphism by the Open Mapping Theorem for Polish groups (see, e.g., \cite[Theorem 1.2.6]{Becker_Kechris_1996}). Hence, the two topologies on $G$ coincide.
\end{proof}

\begin{proof}[Proof of Theorem \ref{theorem_tits_topological}]
    Since $W$ is countable, $\mrm{Aut}(W)$ is a (non-Archimedean) Polish group. Thus, the theorem follows from Proposition \ref{Spe_closed}, Proposition \ref{Fgamma_closed}, and Lemma \ref{lemma_top_decomp}.
\end{proof}

\section{When is $\mrm{Inn}(W)$ closed in $\mrm{Aut}(W)$?}\label{Section_inn_closed}

The aim of this section is to prove Theorem \ref{theorem:inn_closed}, i.e., a 
graph-theoretic characterization of when $\mrm{Inn}(W)$ is closed in
$\mrm{Aut}(W)$. Our standing assumption in this section is that $W$ is a countable right-angled Coxeter group.

	\begin{notation}\label{Z_notation} Let
	\[
	Z_\Gamma=\{z\in V(\Gamma):N^*(z)=V(\Gamma)\}.
	\]
That is, $Z_\Gamma$ is the set of vertices of $\Gamma$ which are adjacent to every other vertex of $\Gamma$. 
\end{notation}

	\begin{convention}\label{the_convention} The empty intersection of subsets of $V(\Gamma)$ is $V(\Gamma)$.
\end{convention}

\begin{definition}\label{definition_finite_star_base}
    Let $\Gamma$ be a graph. We say $\Gamma$ has the finite-star-base property if there is a finite set $A \subseteq V(\Gamma)$ such that $\bigcap_{a \in A} N^*(a) = Z_{\Gamma}$.
\end{definition}
\begin{remark} Let us look more closely at this graph-theoretic condition. In general, the property states that for every $v \in V(\Gamma)$, either $v$ is adjacent to every vertex, or there is a witness $u \in A$ such that $v$ is not adjacent to $u$.
We observe three cases:
\newline \underline {Case 1}. $A = \emptyset$.
\newline In this case $Z_\Gamma = V(\Gamma)$ and so $W(\Gamma)$ is abelian, in fact, an $\mathbb{F}_2$-vector space. 
\newline \underline {Case 2}. $A \neq \emptyset$ and $Z_\Gamma = \emptyset$.
\newline In this case necessarily $|A| \geq 2$ and $\bigcap_{a\in A}N^*(a)=Z_\Gamma = \emptyset$ is telling us that for any $v \in V(\Gamma)$ we have that $v$ is connected to a proper (possibly empty) subset of~$A$.
\newline \underline {Case 3}. $A \neq \emptyset$ and $Z_\Gamma \neq \emptyset$.
\newline In this case the condition $\bigcap_{a\in A}N^*(a)=Z_\Gamma$ is easy to understand.
\end{remark} 

\begin{notation}\label{notation_centra} For $F \subseteq W$, we denote by $C_W(F)$ the centralizer of $F$ in $W$. We allow the case $A = \emptyset$, in which case, by convention, we let $C_W(\{e_W\}) = W$, where $e_W$ is the identity of $W$. Also, we denote by $Z(W)$ the center of $W$.
\end{notation} 

    \begin{notation} For $w \in W$, we denote by $\mrm{ad}(w)$ the inner automorphism of $W$ induced by $w$.
    \end{notation}

We recall the following facts about right-angled Coxeter groups.
\begin{lemma}\label{finite_star_base_lemma}
Let $W=W(\Gamma)$ be a countable right-angled Coxeter group. 
\begin{enumerate}[(1)]
\item For every finite $A\subseteq V(\Gamma)$, $
C_W(A)=W_{\bigcap_{a\in A}N^*(a)}$ (notice that by \ref{the_convention} and \ref{notation_centra} if $A = \emptyset$ this equality becomes $C_W(A)=W_{V(\Gamma)} = W$ which is obviously true).
\item $ Z(W)=W_{Z_\Gamma}$ (recall \ref{Z_notation}).
\item For any family of subsets $\{A_i\}_{i \in I}$ of $V(\Gamma)$, we have $$ \bigcap_{i \in I}W_{A_i}= W_{\bigcap_{i \in I} A_i}$$
\end{enumerate}
\end{lemma}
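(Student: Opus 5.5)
We treat the three items in the order (3), (1), (2), since (1) and (2) both reduce to (3) together with the normal form theory for right-angled Coxeter groups.

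\emph{Item (3).} We use two standard facts about $W = W(\Gamma)$.
\begin{enumerate}[(a)]
\item Every $w \in W$ has a well-defined support $\mrm{sp}(w) \subseteq V(\Gamma)$, namely the set of letters occurring in any reduced word for $w$. This is well defined because any two reduced words differ only by commutations of adjacent letters (Tits' solution to the word problem).
\item For $B \subseteq V(\Gamma)$, the special subgroup $W_B$ is exactly $\{w : \mrm{sp}(w) \subseteq B\}$. The inclusion $\supseteq$ is clear. For $\subseteq$, a word in $B$ reduces, by deletions and commutations only, to a reduced word in $B$.
\end{enumerate}
Then $w \in \bigcap_i W_{A_i}$ iff $\mrm{sp}(w) \subseteq A_i$ for all $i$, iff $\mrm{sp}(w) \subseteq \bigcap_i A_i$, iff $w \in W_{\bigcap_i A_i}$. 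The case $I = \emptyset$ matches Convention \ref{the_convention}.

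\emph{Item (1).} Since $C_W(A) = \bigcap_{a \in A} C_W(a)$, item (3) reduces (1) to the single-vertex claim $C_W(a) = W_{N^*(a)}$. The inclusion $\supseteq$ is immediate, because every generator in $N^*(a)$ commutes with $a$. For $\subseteq$, take a reduced word for $w$ and suppose $aw = wa$. Write $w = w_1 w_2$, where $w_1 \in W_{N^*(a)}$ is the maximal prefix that can be shuffled to the front. Then $w_2$ commutes with $a$, and no reduced word for $w_2$ begins with a letter in $N^*(a)$; for $a$ itself this means $\ell(aw_2) > \ell(w_2)$.

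This step is the main obstacle. We plan to apply the deletion/exchange condition to $a w_2 = w_2 a$. If $w_2 \neq e$, then $\ell(aw_2) = \ell(w_2) + 1$ and $aw_2$ has reduced word $a\cdot(\text{word for } w_2)$. The word $(\text{word for } w_2)\cdot a$ represents the same element, so by the solution to the word problem the final letter $a$ can be commuted to the front. That forces every letter of $w_2$ to commute with $a$, so $\mrm{sp}(w_2) \subseteq N(a)$. This contradicts the choice of $w_2$ (its first letter would lie in $N^*(a)$). Hence $w_2 = e$ and $w \in W_{N^*(a)}$.

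An alternative route is to cite the standard description of centralizers of generators in Coxeter groups (Brink's theorem) specialized to the right-angled case. Here the reflection centralizer is $\langle a\rangle \times W_{N(a)}$, since right-angled Coxeter groups have no odd edges.

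\emph{Item (2).} We have $Z(W) = C_W(V(\Gamma)) = \bigcap_{v \in V(\Gamma)} C_W(v)$, and by the single-vertex case this equals $\bigcap_v W_{N^*(v)}$. By (3) this is $W_{\bigcap_v N^*(v)}$. Finally, $\bigcap_v N^*(v) = Z_\Gamma$: a vertex $z$ lies in every $N^*(v)$ iff $z = v$ or $z$ is adjacent to $v$ for every $v$, iff $N^*(z) = V(\Gamma)$. Note that (2) uses the single-vertex case over possibly infinite families, which is why we rely on (3) for arbitrary index sets rather than on (1) alone.
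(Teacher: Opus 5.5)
The paper does not prove this lemma itself: it cites \cite[Fact~3.52]{pao1} for (1) and (2), and \cite[Th.~4.1.6]{davis} for (3). Your proposal instead gives a self-contained argument. You derive (3) from the support description $W_B=\{w:\mrm{sp}(w)\subseteq B\}$, which is essentially how Davis obtains his intersection theorem. You then reduce both (1) and (2) to the single-generator identity $C_W(a)=W_{N^*(a)}$ together with (3). This makes visible that countability of $W$ is never used. It also makes clear that (2) needs (3) for arbitrary, possibly infinite, families, whereas (1) only needs finite ones. The reductions, the identification $\bigcap_{v}N^*(v)=Z_\Gamma$, and the handling of the empty-family conventions are all correct.

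The one step to tighten is the single-vertex argument you flagged. You assert that the final letter $a$ of $u\cdot a$ (with $u$ a reduced word for $w_2$) can be commuted to the front. That is only justified when $a\notin\mrm{sp}(w_2)$. Commutation moves never swap two occurrences of the same letter, so the leading $a$ of $a\cdot u$ corresponds to the \emph{first} occurrence of $a$ in $u\cdot a$, and that occurrence may lie inside $u$.

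The fix is short. That first occurrence must end up in position $1$, so every letter before it is swapped with it directly and therefore lies in $N(a)$. Hence the first letter of $u$ is either $a$ itself or lies in $N(a)$. In both cases it belongs to $N^*(a)$, which contradicts your choice of $w_2$ whenever $w_2\neq e$.

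It is also cleaner to define $w_2$ as the minimal-length element of the coset $W_{N^*(a)}w$, rather than as the informal ``maximal prefix that can be shuffled to the front''. This choice gives $\ell(sw_2)>\ell(w_2)$ for every $s\in N^*(a)$ immediately, and $w_1=ww_2^{-1}\in W_{N^*(a)}\subseteq C_W(a)$, so $w_2$ commutes with $a$.
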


\begin{proof}
For items (1) and (2), see e.g. \cite[Fact~3.52]{pao1}. For item (3), see \cite[Th.~4.1.6]{davis}.
\end{proof}

\begin{theorem}\label{theorem:inn_closed}
    Let $W=W(\Gamma)$ be a countable right-angled Coxeter group. Then the following are equivalent. \begin{enumerate}[(1)]
        \item $\mrm{Inn}(W(\Gamma))$ is a closed subgroup of $\mrm{Aut}(W(\Gamma))$.
        \item $\mrm{Inn}(W(\Gamma))$ is a discrete subgroup of $\mrm{Aut}(W(\Gamma))$.
        \item $\Gamma$ has the finite-star-base property.
    \end{enumerate}
\end{theorem}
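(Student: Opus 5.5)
I would prove the cycle (3) $\Rightarrow$ (2) $\Rightarrow$ (1) $\Rightarrow$ (3). The basic observation is that $\mrm{Inn}(W) \cong W/Z(W)$ via $w \mapsto \mrm{ad}(w)$. A basic open neighbourhood of the identity in $\mrm{Aut}(W)$ is the pointwise stabilizer of a finite set $F \subseteq W$. Since every finite subset of $W$ lies in $W_A$ for some finite $A \subseteq V(\Gamma)$, these stabilizers may be taken of the form $U_A = \{\alpha : \alpha(a)=a \ \forall a\in A\}$ with $A$ finite. Then $U_A \cap \mrm{Inn}(W) = \{\mrm{ad}(w) : w \in C_W(A)\}$, and by Lemma~\ref{finite_star_base_lemma}(1) this equals $\mrm{ad}(W_{\bigcap_{a\in A}N^*(a)})$.

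For (3) $\Rightarrow$ (2), take the finite set $A$ witnessing the finite-star-base property. The computation above gives $U_A \cap \mrm{Inn}(W) = \mrm{ad}(W_{Z_\Gamma})$. By Lemma~\ref{finite_star_base_lemma}(2), $W_{Z_\Gamma} = Z(W)$, so this intersection is trivial. Hence $\mrm{Inn}(W)$ is discrete.

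For (2) $\Rightarrow$ (1), I would use the standard fact that a discrete subgroup of a Hausdorff topological group is closed. The argument runs as follows. Pick an open neighbourhood $U$ of $1$ with $U \cap \mrm{Inn}(W) = \{1\}$, and an open $V \ni 1$ with $V^{-1}V \subseteq U$. If $g \in \overline{\mrm{Inn}(W)}$, then $gV$ contains at most one element of $\mrm{Inn}(W)$. Hausdorffness then forces $g$ to equal that element.

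For (1) $\Rightarrow$ (3), I expect the main obstacle to be producing a non-inner automorphism. I would argue by contrapositive. Assume that for every finite $A$, the set $\bigcap_{a\in A}N^*(a)$ strictly contains $Z_\Gamma$. Enumerate $V(\Gamma) = \{v_0, v_1, \dots\}$ (countable) and let $A_n = \{v_0,\dots,v_n\}$. I would build $w_n \in W_{\bigcap_{a \in A_n} N^*(a)}$, each chosen as a generator $u_n \in \bigcap_{a\in A_n}N^*(a)\setminus Z_\Gamma$ or a product of such. Then set $\alpha_n = \mrm{ad}(w_0 w_1 \cdots w_n)$. Each $\mrm{ad}(w_{n+1})$ fixes $A_n$ pointwise, so on every $v_k$ the sequence $\alpha_n(v_k)$ stabilizes. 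The limit $\alpha$ is therefore a well-defined endomorphism. It is bijective because the inverses $\alpha_n^{-1}$ converge likewise: they also fix more and more generators eventually, by the same centralizer argument applied in reverse order. Hence $\alpha \in \overline{\mrm{Inn}(W)}$.

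It remains to show that $\alpha$ is not inner. Suppose $\alpha = \mrm{ad}(g)$. Then $g^{-1}w_0\cdots w_n$ centralizes $A_m$ for large $n \ge m$. So $w_0\cdots w_n \in g\,C_W(A_m)$, and in particular $g^{-1}w_0\cdots w_n \in W_{\bigcap_{a\in A_m}N^*(a)}$. If $\bigcap_{a \in A_m} N^*(a)$ is finite, this directly constrains the support. In general I would choose the $u_n$ so that infinitely many distinct non-central generators appear and cannot cancel, to force a contradiction with $g$ having finite support. The concrete plan is to choose $u_{n}\notin \mathrm{sp}(w_0\cdots w_{n-1})\cup Z_\Gamma$, which is possible since $\bigcap_{a\in A_n}N^*(a)\setminus Z_\Gamma$ is nonempty. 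The cleanest route is to pick $u_n$ not central but outside $N^*$ of some later vertex, say $u_n \notin N^*(v_{k_n})$, where $v_{k_n}$ is chosen with $k_n$ large. Then $\alpha(v_{k_n}) \ne v_{k_n}$ for infinitely many $n$ by a length count, whereas an inner automorphism $\mrm{ad}(g)$ fixes every generator in $\bigcap_{s\in\mathrm{sp}(g)}N^*(s)$. One then checks that infinitely many $v_{k_n}$ lie in this set, which gives the contradiction.
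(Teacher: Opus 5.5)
Your arguments for (3)$\Rightarrow$(2) and (2)$\Rightarrow$(1) are correct, but (1)$\Rightarrow$(3) has a genuine gap. The assertion that the inverses $\alpha_n^{-1}$ converge ``by the same centralizer argument applied in reverse order'' is false. Write $g_n=w_0\cdots w_n$. The equality $\alpha_{n+1}^{-1}(v_k)=\alpha_n^{-1}(v_k)$ requires $w_{n+1}$ to commute with $g_n^{-1}v_kg_n$, whose support contains $\mrm{sp}(g_n)$. Knowing $w_{n+1}\in W_{\bigcap_{a\in A_{n+1}}N^*(a)}$ gives no control over $\mrm{sp}(g_n)$.

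Concretely, let $\Gamma$ be the complement of a ray, i.e.\ $v_i$ is adjacent to $v_j$ iff $|i-j|\geq 2$. Then $Z_\Gamma=\emptyset$ and $\bigcap_{i\leq n}N^*(v_i)=\{v_{n+2},v_{n+3},\dots\}$ for $n\geq 1$, so the finite-star-base property fails. The choice $w_n=u_n=v_{n+2}$ satisfies all your constraints (e.g.\ $u_n\notin\mrm{sp}(g_{n-1})$ and $u_n\notin N^*(v_{n+3})$). But $\alpha_n^{-1}(v_2)=v_{n+2}\cdots v_3v_2v_3\cdots v_{n+2}$ is reduced of length $2n+1$. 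If $\alpha(x)=v_2$, then $x=\alpha_n^{-1}(v_2)$ for all large $n$, which is absurd. So your limit $\alpha$ is an injective, non-surjective endomorphism. It is not in $\mrm{Aut}(W)$ and says nothing about closedness of $\mrm{Inn}(W)$ in $\mrm{Aut}(W)$.

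The non-innerness step is also unsupported. An inner automorphism $\mrm{ad}(g)$ can move infinitely many generators ($\mrm{ad}(s)$ moves every non-neighbour of $s$). Nothing forces infinitely many $v_{k_n}$ into $\bigcap_{s\in\mrm{sp}(g)}N^*(s)$.

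The missing idea is the paper's Baire category step. $\mrm{Inn}(W)$ is countable, and a countable closed subgroup of a Polish group is Polish, hence discrete by Baire. This gives (1)$\Rightarrow$(2) at once. Failure of (3) is then witnessed by the simple sequence $\mrm{ad}(u_n)\to\mathrm{id}$ with $u_n\in\bigcap_{a\in A_n}N^*(a)\setminus Z_\Gamma$. Each $\mrm{ad}(u_n)$ is nontrivial since $u_n\notin Z(W)$, so no limit automorphism is needed.

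If you want to keep a direct construction, you must also require $u_n$ to be adjacent to, and distinct from, every earlier $u_i$. For instance, take $u_n\in\bigcap_{a\in B_n}N^*(a)\setminus Z_\Gamma$, where $B_n$ contains $A_n$, the earlier $u_i$, and a non-neighbour $x_i$ of each earlier $u_i$. Then $u_{n+1}$ centralizes both $g_n$ and $A_{n+1}$, so the inverses do stabilize. For non-innerness, suppose $\alpha=\mrm{ad}(g)$ and choose $j$ with $u_j\notin\mrm{sp}(g)$. For large $n$ you get $g^{-1}g_n\in C_W(x_j)=W_{N^*(x_j)}$, which the retraction $\rho_{\{u_j\}}$ kills. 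But $\rho_{\{u_j\}}$ sends $g^{-1}g_n$ to $u_j$, a contradiction.
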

\begin{proof}
    The group $\mrm{Inn}(W)$ is countable. Hence $\mrm{Inn}(W)$ is closed in $\mrm{Aut}(W)$ if and only if it is discrete. Indeed, $\mrm{Aut}(W)$ is Polish, and a countable closed subgroup $H$ of a Polish group $G$ is discrete by the Baire category theorem: being closed in $G$, the subgroup $H$ is itself a Polish space, hence a Baire space; since $H$ is countable, $H=\bigcup_{h\in H}\{h\}$ is a countable union of closed sets, so, as $H$ is not meager in itself, some singleton $\{h_0\}$ must have non-empty interior in $H$. Thus $h_0$ is isolated in $H$, and since left translation by $h_0^{-1}$ is a homeomorphism of $H$, the identity---and therefore, by translation, every point of $H$---is isolated; that is, $H$ is discrete. Conversely, a discrete subgroup of a metrizable topological group is closed. Therefore $\mrm{Inn}(W)$ is closed in $\mrm{Aut}(W)$ if and only if $\mrm{Inn}(W)$ is discrete if and only if every convergent sequence of inner automorphisms is eventually constant.

\smallskip \noindent    Now, suppose that there are vertices $x_1,\dots,x_n$ such that $\bigcap_{k=1}^n N^*(x_k) = Z_{\Gamma}$. If a sequence of inner automorphisms $\mrm{ad}(g_i)$ converges to an automorphism of $W(\Gamma)$, then for sufficiently large $i,j < \omega$ and for all $1 \leq k \leq n$, we have $g_i x_kg_i^{-1} = g_j x_k g_j^{-1}$, and thus $g_i g_j^{-1} \in C_W(x_k)$. But since $\bigcap_{k=1}^n N^*(x_k) = Z_{\Gamma}$, we have 
    $$\bigcap_{{k}=1}^n C_W(x_k) = \bigcap_{{k}=1}^n W_{N^*(x_k)} = W_{\bigcap_{{k}=1}^n   N^*(x_k)} = W_{Z_{\Gamma}} = Z(W).$$
    Therefore, for all sufficiently large $i,j < \omega$, we have $g_i g_j^{-1} \in Z(W)$ and thus $\mrm{ad}(g_i) = \mrm{ad}(g_j)$. 

    \smallskip \noindent Conversely, suppose that for all finite subsets $x_1,\dots,x_n$ of $V(\Gamma)$, the intersection $\bigcap_{k=1}^n N^*(x_k)$ properly contains $Z_{\Gamma}$. Enumerate all vertices of $\Gamma$ as $x_1,x_2,\dots$, and for each $n$, let $v_n$ be an element of $\bigcap_{k=1}^n N^*(x_k) \setminus Z_{\Gamma}$. Then the sequence $\mrm{ad}(v_n)$ converges to the identity, but no $\mrm{ad}(v_n)$ is the identity map since $v_n \not\in Z(W)$ for all $n < \omega$.
\end{proof}
\color{black}

\section{A characterization of $\mrm{Inn}(W)=\mrm{Spe}(W)$}\label{sec_Spe=Inn}

As noted in Section~\ref{sec_preliminaries}, for right-angled Coxeter groups of
arbitrary rank star-connectedness is necessary, but not sufficient, for
$\mrm{Inn}(W) = \mrm{Spe}(W)$. The aim of this section is to complete the
countable case by giving a graph-theoretic characterization of when this
equality holds. Theorem~\ref{theorem:inn_closed} already characterizes, for $W$
countable, when $\mrm{Inn}(W)$ is closed in $\mrm{Spe}(W)$. Here we prove that
star-connectedness implies that $\mrm{Inn}(W)$ is dense in $\mrm{Spe}(W)$, for
any right-angled Coxeter group $W$; together with
Theorem~\ref{theorem:inn_closed}, this settles the problem, yielding the desired
characterization of when $\mrm{Inn}(W) = \mrm{Spe}(W)$ for a countable
right-angled Coxeter group~$W$.

Much of the analysis in this section goes through at arbitrary
rank, so we drop our standing countability assumption: unless stated otherwise,
$W = W(\Gamma)$ is a right-angled Coxeter group on a graph $\Gamma$ of arbitrary
cardinality. The topology of pointwise convergence on $\mrm{Aut}(W)$, whose
basic neighbourhoods of the identity are the pointwise stabilizers of {\em
finite} subsets of $W$, still makes sense at this level of generality (see,
e.g., \cite[p.~136]{hodges}); it is, however, no longer separable once $\Gamma$
is uncountable, and we invoke countability only where the argument
requires it.

	\begin{context} Throughout this section $\Gamma$ is a graph of arbitrary rank (unless otherwise stated) and $W=W(\Gamma)$. For $w\in W$, we write $\mrm{sp}(w)$ for the set of vertices occurring in a reduced word for $w$; this is well-defined (cf. e.g. \cite[Section~3.3]{pao1}). We also write $\ell(w)$ for the word length of $w$ with respect to the standard generating set $V(\Gamma)$.
\end{context} 

\begin{definition}\label{locally_inner_def}
Let $\alpha\in\mrm{Spe}(W)$. We say that $\alpha$ is locally inner if for every finite $A\subseteq V(\Gamma)$ there is $w\in W$ such that $
\alpha(a)=waw^{-1}$
for every $a\in A$.
\end{definition}

Recall that, by \ref{Spe_closed}, $\mrm{Spe}(W)$ is closed in $\mrm{Aut}(W)$. Thus, in this section, when we write $\overline{\mrm{Inn}(W)}$ we mean the closure $\overline{\mrm{Inn}(W)}^{\,\mrm{Spe}(W)}$.

\begin{lemma}\label{locally_inner_closure_lemma}
For $\alpha\in\mrm{Spe}(W)$, the following are equivalent:
\begin{enumerate}[(1)]
\item $\alpha\in\overline{\mrm{Inn}(W)}$;
\item $\alpha$ is locally inner.
\end{enumerate}
Consequently,
\[
\overline{\mrm{Inn}(W)}=\mrm{Spe}(W)
\]
if and only if every element of $\mrm{Spe}(W)$ is locally inner.
\end{lemma}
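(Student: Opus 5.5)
The plan is to unwind the definition of the topology of pointwise convergence on $\mrm{Spe}(W)$. A basic neighbourhood of $\alpha$ has the form
\[
U(\alpha, F) = \{\beta \in \mrm{Spe}(W) : \beta(x) = \alpha(x) \text{ for all } x \in F\},
\]
where $F \subseteq W$ is finite. Thus $\alpha \in \overline{\mrm{Inn}(W)}$ if and only if for every finite $F \subseteq W$ there is $w \in W$ with $\mrm{ad}(w)(x) = \alpha(x)$ for all $x \in F$. The whole proof consists of comparing this condition with local innerness, which only quantifies over finite sets of \emph{vertices}.

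For (1) $\Rightarrow$ (2), let $A \subseteq V(\Gamma)$ be finite. Since $A$ is a finite subset of $W$, the set $U(\alpha, A)$ is an open neighbourhood of $\alpha$, so it meets $\mrm{Inn}(W)$. This gives some $\mrm{ad}(w)$ with $waw^{-1} = \alpha(a)$ for every $a \in A$, which is exactly local innerness.

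For (2) $\Rightarrow$ (1), let $F \subseteq W$ be finite. Let $A = \bigcup_{x \in F} \mrm{sp}(x)$; this is a finite set of vertices, and every $x \in F$ is a word in the elements of $A$. By local innerness there is $w$ with $\alpha(a) = waw^{-1}$ for all $a \in A$. Since $\alpha$ and $\mrm{ad}(w)$ are both homomorphisms agreeing on $A$, they agree on the subgroup $W_A$ generated by $A$, and in particular on $F$. Hence $\mrm{ad}(w) \in U(\alpha, F)$, and $\alpha$ lies in the closure.

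The consequence follows immediately. By Proposition~\ref{Spe_closed}, closure in $\mrm{Spe}(W)$ is the same as closure in $\mrm{Aut}(W)$ intersected with $\mrm{Spe}(W)$, and $\mrm{Inn}(W) \subseteq \mrm{Spe}(W)$. Therefore $\overline{\mrm{Inn}(W)} = \mrm{Spe}(W)$ exactly when every $\alpha \in \mrm{Spe}(W)$ satisfies (1), which by the equivalence means every $\alpha$ is locally inner.

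There is no real obstacle. The only point needing care is the reduction from finite subsets of $W$ to finite subsets of $V(\Gamma)$, which is handled by taking supports. No countability is used, consistent with the arbitrary-rank context of this section.
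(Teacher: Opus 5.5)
Your proposal is correct and follows essentially the same route as the paper. For (1)$\Rightarrow$(2) you test a basic neighbourhood of $\alpha$ on the finite vertex set $A$; for (2)$\Rightarrow$(1) you pass from a finite $F \subseteq W$ to the finite set of vertices occurring in words for its elements and use that two homomorphisms agreeing on those vertices agree on $F$. Your appeal to Proposition~\ref{Spe_closed} in the last step is harmless but unnecessary, since closure in a subspace is always the ambient closure intersected with that subspace.
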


\begin{proof}
The implication (1)$\Rightarrow$(2) is immediate from the pointwise convergence topology, by testing a basic neighbourhood of $\alpha$ on the finite set $A$.

\smallskip \noindent Conversely, suppose that $\alpha$ is locally inner. Let $F\subseteq W$ be finite. Choose a finite $A\subseteq V(\Gamma)$ containing every vertex appearing in some fixed word representative of an element of $F$. By local innerness, there is $w\in W$ such that $\alpha(a)=waw^{-1}$ for every $a\in A$. Since both maps are homomorphisms, it follows that $\alpha(f)=wfw^{-1}$ for every $f\in F$. Thus every basic neighbourhood of $\alpha$ meets $\mrm{Inn}(W)$, and hence $\alpha\in\overline{\mrm{Inn}(W)}$.
\end{proof}

The main result of this section is the following ``Density Theorem''.

\begin{theorem}\label{density_theorem}
Let $\Gamma$ be a star-connected graph and let $W=W(\Gamma)$. Then
\[
\overline{\mrm{Inn}(W)}=\mrm{Spe}(W).
\]
\end{theorem}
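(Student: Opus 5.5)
By Lemma \ref{locally_inner_closure_lemma}, it suffices to show that every $\alpha\in\mrm{Spe}(W)$ is locally inner. So fix $\alpha\in\mrm{Spe}(W)$ and a finite $A\subseteq V(\Gamma)$. For each $v\in V(\Gamma)$, $\alpha(v)$ is an involution conjugate to $v$, so we can write $\alpha(v)=g_v v g_v^{-1}$ with $g_v\in W$. We normalize $g_v$ to have minimal length; equivalently, no reduced expression of $g_v$ ends with a letter of $N^*(v)$. The plan is to reduce to a finite graph, where Fact \ref{spe_inn} and Fact \ref{partial_conj_fact} are available, and to use star-connectedness of $\Gamma$ to control the finite graph.

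\emph{Step 1: a finite approximation.} We choose a finite full subgraph $\Delta\subseteq\Gamma$ containing $A$ and $\mrm{sp}(g_a)$ for $a\in A$. We then enlarge it as follows. For each $s\in A\cup\bigcup_{a}\mrm{sp}(g_a)$ and each pair of vertices of $\Delta\setminus N^*(s)$, we add a path joining them inside $\Gamma\setminus N^*(s)$, which exists by star-connectedness. This makes $\Delta\setminus N^*_\Delta(s)$ connected for the finitely many relevant $s$. Let $\rho\colon W\to W_\Delta$ be the retraction that kills vertices outside $\Delta$. Then $\beta=\rho\circ\alpha|_{W_\Delta}$ is an endomorphism of $W_\Delta$ sending each vertex $v\in\Delta$ to a conjugate of $v$. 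Moreover, for $a\in A$ we have $\beta(a)=\alpha(a)$, because $\mrm{sp}(g_a)\cup\{a\}\subseteq\Delta$.

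\emph{Step 2: inner on $A$.} We want to show that $\beta$ agrees on $A$ with some $\mrm{ad}(w)$, $w\in W_\Delta$. The most direct route is the following. First prove that $\beta$ is an automorphism, hence lies in $\mrm{Spe}(W_\Delta)$. Then Fact \ref{partial_conj_fact} writes $\beta$ as a product of partial conjugations $\pi_{(s,C)}$ of $\Delta$. Finally, show that each partial conjugation needed for the restriction to $A$ can be taken with $C$ equal to the whole of $\Delta\setminus N^*(s)$, so that it acts on $A$ as $\mrm{ad}(s)$. This is exactly where the connectivity arranged in Step 1 enters. An alternative that avoids Fact \ref{partial_conj_fact} is to argue directly on the conjugators. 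For non-adjacent $u,v\in A$ lying in the same component of $\Delta\setminus N^*(s)$, with $s$ the last letter of $g_u$, the commutation relations in $W$ between the $\alpha$-images of the vertices along a connecting path force $s$ to also be absorbable into $g_v$. Inducting on $\max_a\ell(g_a)$, we then peel off a common final letter and reduce to the case where all $g_a$ are trivial on $A$.

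\emph{Main obstacle.} The hard step is Step 2, and specifically the two finiteness issues it hides. First, $\beta$ need not obviously be bijective. I would try to show this by applying the same retraction to $\alpha^{-1}$, choosing $\Delta$ large enough to contain the supports of the conjugators of $\alpha^{-1}$ on the relevant vertices, and checking that $\rho\alpha^{-1}$ and $\rho\alpha$ compose to a conjugating endomorphism of the finite group that is the identity modulo commutators. Alternatively, I would bypass bijectivity entirely with the direct peeling argument. Second, the enlargement in Step 1 introduces new vertices whose own stars need not be connected in $\Delta$. The point to verify is that only the finitely many letters $s$ actually occurring in the conjugators $g_a$, $a\in A$, need connected complements, so the enlargement can stop after one round. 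If the peeling induction is used, the set of such letters only shrinks along the induction, which is what makes this finite closure work.
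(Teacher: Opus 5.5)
Your reduction to local innerness and the retraction onto a finite parabolic subgroup are fine. But Step 2, which carries all the content, is not proved, and the mechanism you propose does not work as stated.

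Fact~\ref{partial_conj_fact} only says that $\beta$ is \emph{some} product of partial conjugations $\pi_{(s,C)}$ of $W_\Delta$. Here $s$ ranges over all of $\Delta$, including the path vertices you added, and $C$ is an arbitrary union of components. Nothing lets you choose a factorization whose factors behave well on $A$, so ``the partial conjugations needed for the restriction to $A$'' has no content. The only way to exploit such a factorization is to show that \emph{every} generator lies in the subgroup of automorphisms that are inner on $A$.

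This fails for your $\Delta$. Suppose $u,v\in A$, $u\in C$ and $v\notin C\cup N^*(s)$. Then $\pi_{(s,C)}$ is not inner on $\{u,v\}$, since a conjugator would have to lie in $sW_{N^*(u)}\cap W_{N^*(v)}=\emptyset$. Your enlargement controls components only for the letters in $R=A\cup\bigcup_a\mrm{sp}(g_a)$, not for the new vertices. Even for $R$, a single round does not suffice. A path added for $s_1$ can contain vertices outside $N^*(s_2)$ that are cut off from the rest of $\Delta\setminus N^*(s_2)$, and nothing shows that iterating terminates. The peeling alternative simply asserts its key step: that the chosen letter can be absorbed into every conjugator without lengthening the others, including conjugators of vertices in $N^*(s)$.

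Two ideas are missing, and the paper uses both. First, restrict to pairs. After normalizing so that $\alpha(a)=a$, take $S=\{a,b\}\cup\mrm{sp}(\alpha(b))$. Fact~\ref{support_propagation_fact}, together with star-connectedness of $\Gamma$ itself, shows that every $s\in S$ lies in $N^*(a)\cup N^*(b)$; no connectivity inside $S$ is needed. Hence no partial conjugation of $W_S$ separates $a$ from $b$, and all generators are inner on $\{a,b\}$. Since such maps form a subgroup, M{\"u}hlherr's theorem makes $\beta_S$ inner on $\{a,b\}$ (Theorem~\ref{rank_two_local_conjugacy_thm}).

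Second, pass from pairs to an arbitrary finite $A$. The conjugator sets $K_a=g_aW_{N^*(a)}$ are cosets of standard parabolic subgroups, and the pair result says they intersect pairwise. By the Helly property of Lemma~\ref{parabolic_coset_helly_lemma} they then have a common point.

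Finally, your worry about bijectivity is unnecessary. Both $\alpha$ and $\alpha^{-1}$ send vertices to conjugates, so they preserve $\ker\rho=\langle\!\langle V(\Gamma)\setminus\Delta\rangle\!\rangle$. Thus $\beta$ is just the induced automorphism of $W/\ker\rho\cong W_\Delta$ (Lemma~\ref{special_retraction_lemma}). Note also that $W_\Delta$ is in general infinite, not a finite group.
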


We prove Theorem~\ref{density_theorem} through a rank-two local conjugacy result (cf. \ref{rank_two_local_conjugacy_thm}) together with a Helly type result (cf. \ref{parabolic_coset_helly_lemma}). Notice that the proof of Theorem \ref{rank_two_local_conjugacy_thm} uses the finite-rank theorem of M{\"u}hlherr \cite{muhlherr} only in a {\em finite} induced subgraph~of~$\Gamma$.

\begin{fact}[{\cite[Lemma~2.17]{pao2}}]\label{support_propagation_fact}
Let $\alpha\in\mrm{Spe}(W)$, let $v\in V(\Gamma)$, and let $C$ be a connected component of $\Gamma\setminus N^*(v)$. If $x,y\in C$ and $v\in\mrm{sp}(\alpha(x))$, then
\[
v\in\mrm{sp}(\alpha(y)).
\]
\end{fact}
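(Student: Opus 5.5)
The proof is a local-to-global argument along paths in $C$. Since $C$ is a connected component of $\Gamma\setminus N^*(v)$, any two of its vertices are joined by a path inside $C$. By induction along such a path, it suffices to prove the following: if $x,y\in C$ are adjacent and $v\in\mrm{sp}(\alpha(x))$, then $v\in\mrm{sp}(\alpha(y))$. The point of the hypothesis $x,y\in C$ is that $x,y\notin N^*(v)$. In particular $v\neq x,y$, and neither $x$ nor $y$ commutes with $v$.

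\emph{Step 1: normal form for $\alpha(x)$.} Since $\alpha\in\mrm{Spe}(W)$, we have $\alpha(x)=gxg^{-1}$ for some $g\in W$. Choose $g$ of minimal length, so that no reduced word for $g$ ends with a letter of $N^*(x)$; otherwise that letter could be absorbed into the centralizer $C_W(x)=W_{N^*(x)}$ of Lemma~\ref{finite_star_base_lemma}(1). With this choice, $gxg^{-1}$ is reduced as written, so $\mrm{sp}(\alpha(x))=\mrm{sp}(g)\cup\{x\}$. Since $v\neq x$, we get $v\in\mrm{sp}(g)$.

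\emph{Step 2: use commutation.} Because $x$ and $y$ commute, so do $\alpha(x)$ and $\alpha(y)$. Hence
\[
\alpha(y)\in C_W(gxg^{-1})=gW_{N^*(x)}g^{-1},
\]
again by Lemma~\ref{finite_star_base_lemma}(1), applied to $x$ and conjugated by $g$. Write $\alpha(y)=gug^{-1}$ with $u\in W_{N^*(x)}$. Since $\alpha$ is special, $u$ is conjugate in $W$ to $y$. The parity of the number of occurrences of each generator is a conjugacy invariant, via the abelianization $W\to(\mathbb{Z}/2)^{(V(\Gamma))}$. It follows that $y\in\mrm{sp}(u)$.

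\emph{Step 3: cancellation analysis (the main obstacle).} We must show that $v$ survives in the reduced form of $gug^{-1}$. I would prove a general cancellation lemma for right-angled Coxeter groups using Tits' solution to the word problem, where reduction proceeds only by commutations and deletions of $ss$. The lemma: if $g$ is reduced and has no terminal letter in $N^*(x)$, and $u\in W_{N^*(x)}$ is reduced, then the only letters of $g$ that can cancel in $gug^{-1}$ are those that can be shuffled, by commutations, to the end of $g$ and commute with every letter of $\mrm{sp}(u)$. Letters of $g$ cannot cancel against letters of $u$, since such a letter would lie in $N^*(x)$ and could then be moved to the end of $g$, contradicting minimality. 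So any cancellation is between $g$ and $g^{-1}$ across $u$, which forces the cancelled letter to commute with all of $\mrm{sp}(u)$.

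Applying this to the letter $v$: since $y\in\mrm{sp}(u)$ and $v$ does not commute with $y$, the letter $v$ cannot cancel. Hence $v\in\mrm{sp}(gug^{-1})=\mrm{sp}(\alpha(y))$, completing the inductive step.

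The delicate point is making the cancellation lemma rigorous when $v$ occurs several times in $g$. I would handle this by looking at the last occurrence of $v$ in $g$, together with its mirror image in $g^{-1}$. Any pair of occurrences of $v$ that cancel must be matched across $u$ in this way, and that is impossible because $y\in\mrm{sp}(u)$ does not commute with $v$.
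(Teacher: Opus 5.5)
The paper gives no proof of this statement; it imports it from \cite[Lemma~2.17]{pao2}, so your proposal has to be judged on its own. Most of it is right. The reduction to adjacent $x,y\in C$ is correct. So is the choice of a shortest $g$ with $\alpha(x)=gxg^{-1}$, the identity $C_W(gxg^{-1})=gW_{N^*(x)}g^{-1}$, and the parity argument giving $y\in\mrm{sp}(u)$. In Step~1 you do not need $gxg^{-1}$ to be reduced: $\mrm{sp}(gxg^{-1})\subseteq\mrm{sp}(g)\cup\{x\}$ and $v\neq x$ already give $v\in\mrm{sp}(g)$.

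\textbf{The gap is the cancellation lemma in Step~3, which is false as you state it.} Cancellations cascade. Take $y\in N(x)$ and non-adjacent $a,b\in N(y)\setminus N^*(x)$, and put $g=ab$, $u=y$. Then $g$ is reduced, its only terminal letter is $b\notin N^*(x)$, and $gug^{-1}=y$. So $a$ cancels, although it cannot be shuffled to the end of $g$.

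The weaker fact you actually use is true: every cancelled letter of $g$ commutes with all of $\mrm{sp}(u)$. But proving it by following individual deletions needs an induction over the whole reduction process. At each stage you must rule out new pairs inside what remains of $g$, and new pairs between $g$ and $u$. You do not supply this, and your suggestion about the last occurrence of $v$ and its mirror image does not settle it.

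\textbf{A clean fix avoids tracking occurrences.} Put $T=\bigcap_{s\in\mrm{sp}(u)}N^*(s)$. Factor $g=hk$ with $k\in W_T$, $h$ the shortest element of $gW_T$, and $\ell(g)=\ell(h)+\ell(k)$. Since $k$ centralizes $u$, we get $gug^{-1}=huh^{-1}$.

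First, no reduced expression of $h$ ends in a letter of $T$, by minimality of $h$. Second, none ends in a letter $s\in\mrm{sp}(u)$. Indeed, $s$ commutes with every letter of $k$, so $h=h's$ would give a reduced expression $g=h'ks$. Since $\mrm{sp}(u)\subseteq N^*(x)$, this contradicts your choice of $g$.

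Now apply Tits' criterion: a word is reduced if and only if no two equal letters are separated only by letters commuting with them. The word $huh^{-1}$ (reduced words for $h$ and $u$, then $h$ reversed) is therefore reduced.
\begin{enumerate}[(1)]
\item Pairs inside $h$, $u$ or $h^{-1}$ are excluded because these words are reduced.
\item An $h$--$u$ or $u$--$h^{-1}$ pair would let $h$ end in a letter of $\mrm{sp}(u)$.
\item An $h$--$h^{-1}$ pair would let $h$ end in a letter of $T$.
\end{enumerate}

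Hence $\mrm{sp}(\alpha(y))=\mrm{sp}(h)\cup\mrm{sp}(u)$. Also $\mrm{sp}(g)=\mrm{sp}(h)\cup\mrm{sp}(k)$ with $\mrm{sp}(k)\subseteq T$. Since $v\notin N^*(y)$ and $y\in\mrm{sp}(u)$, we have $v\notin T$. Therefore $v\in\mrm{sp}(h)\subseteq\mrm{sp}(\alpha(y))$, which completes the inductive step.
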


We also need the following elementary compatibility between special automorphisms and canonical retractions onto standard parabolic subgroups.

\begin{lemma}\label{special_retraction_lemma}
For $A\subseteq V(\Gamma)$, let
\[
\rho_A:W\to W_A
\]
be the canonical retraction of $W$ onto $W_A$, and put $M_A=\ker(\rho_A)$. Then
\[
M_A=\langle\!\langle V(\Gamma)\setminus A\rangle\!\rangle.
\]
Moreover, if $\alpha\in\mrm{Spe}(W)$, then
\[
\alpha(M_A)=M_A.
\]
Consequently $\alpha$ induces an automorphism
\[
\alpha_A\in\mrm{Aut}(W_A),
\]
given by
\[
\alpha_A(x)=\rho_A(\alpha(x))\qquad (x\in W_A),
\]
and in fact $\alpha_A\in\mrm{Spe}(W_A)$.
\end{lemma}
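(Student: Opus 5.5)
First I will identify $M_A$. The retraction $\rho_A$ sends each $v\in A$ to $v$ and each $v\notin A$ to $e$. It is a well-defined homomorphism because every defining relation of $W(\Gamma)$ maps to a relation of $W_A$ or to a trivial one. Clearly $\langle\!\langle V(\Gamma)\setminus A\rangle\!\rangle\subseteq\ker\rho_A$. For the reverse inclusion, let $N=\langle\!\langle V(\Gamma)\setminus A\rangle\!\rangle$. The quotient $W/N$ is presented by the generators $A$ with the relations of $W(\Gamma)$ restricted to $A$, so $W/N\cong W_A$ via the composite $W_A\hookrightarrow W\to W/N$. The map $\rho_A$ factors as $W\to W/N\cong W_A$, and this is injective on $W/N$ because the composite $W_A\to W/N\to W_A$ is the identity. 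Hence $\ker\rho_A=N$, which gives $M_A=\langle\!\langle V(\Gamma)\setminus A\rangle\!\rangle$.

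Next, let $\alpha\in\mrm{Spe}(W)$. Each $v\notin A$ is an involution, so $\alpha(v)=gvg^{-1}$ for some $g\in W$, and this lies in $M_A$ since $M_A$ is normal. Hence $\alpha(M_A)\subseteq M_A$. To get equality I will check that $\alpha^{-1}\in\mrm{Spe}(W)$ and apply the same argument. This holds because $\mrm{Spe}(W)$ is a subgroup: it is the kernel of the map to $F(\Gamma)$ from Proposition \ref{Spe_closed}, or directly, the set of automorphisms preserving every conjugacy class of involutions is closed under inverses. So $\alpha^{-1}(M_A)\subseteq M_A$, and therefore $\alpha(M_A)=M_A$.

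Now $\alpha$ descends to an automorphism $\bar\alpha$ of $W/M_A$. Transporting along the isomorphism $W/M_A\cong W_A$ induced by $\rho_A$ gives $\alpha_A=\rho_A\circ\alpha|_{W_A}$; this uses $\rho_A|_{W_A}=\mathrm{id}$, so the inclusion $W_A\to W/M_A$ is inverse to the isomorphism. Thus $\alpha_A$ is an automorphism, with inverse $(\alpha^{-1})_A$.

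The remaining and main point is that $\alpha_A$ is special. Let $h\in W_A$ be an involution. Then $h$ is an involution in $W$, so $\alpha(h)=ghg^{-1}$ for some $g\in W$, and hence $\alpha_A(h)=\rho_A(g)\,h\,\rho_A(g)^{-1}$ because $\rho_A(h)=h$. This is a conjugate of $h$ inside $W_A$, so $\alpha_A\in\mrm{Spe}(W_A)$. The only subtlety is that the definition of $\mrm{Spe}(W_A)$ quantifies over the involutions of $W_A$, and all of these are involutions of $W$. So there is no real obstacle here, and the one slightly delicate step is the identification of $\ker\rho_A$ carried out in the first paragraph.
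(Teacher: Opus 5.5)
Your proof is correct and follows the paper's argument step for step. You identify $M_A$ as the normal closure of $V(\Gamma)\setminus A$ via the presentation, use normality of $M_A$ together with speciality of $\alpha$ and $\alpha^{-1}$ to get $\alpha(M_A)=M_A$, descend to $W/M_A\cong W_A$, and apply $\rho_A$ to $\alpha(h)=ghg^{-1}$; you also spell out two points the paper leaves implicit (injectivity of $W/M_A\to W_A$ via the identity composite $W_A\to W/M_A\to W_A$, and closure of $\mrm{Spe}(W)$ under inverses).
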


\begin{proof}
The equality $
M_A=\langle\!\langle V(\Gamma)\setminus A\rangle\!\rangle$
follows directly from the presentation of $W$. In more detail, quotienting $W$ by the normal closure of the generators outside $A$ kills precisely those generators and leaves the right-angled Coxeter presentation on the induced subgraph on $A$. The quotient map is exactly $\rho_A$.

\smallskip \noindent Now let $\alpha\in\mrm{Spe}(W)$. If $v\in V(\Gamma)\setminus A$, then $v\in M_A$. Since $\alpha \in \mrm{Spe}(W)$, there is $g\in W$ such that $
\alpha(v)=gvg^{-1}$.
Since $M_A$ is normal in  $W$, this gives $\alpha(v)\in M_A$. Therefore we have
\[
\alpha(V(\Gamma)\setminus A)\subseteq M_A,
\]
and hence
\[
\alpha(M_A)=\alpha\bigl(\langle\!\langle V(\Gamma)\setminus A\rangle\!\rangle\bigr)
=\langle\!\langle \alpha(V(\Gamma)\setminus A)\rangle\!\rangle\subseteq M_A.
\]
Applying the same argument to $\alpha^{-1}\in\mrm{Spe}(W)$ gives the reverse inclusion, so $\alpha(M_A)=M_A$.

\smallskip \noindent Thus $\alpha$ descends to an automorphism of $W/M_A$, which we identify with $W_A$ via the retraction $\rho_A$. This gives the formula
\[
\alpha_A(x)=\rho_A(\alpha(x))\qquad (x\in W_A).
\]
Finally, let $h\in W_A$ be an involution. Since $\alpha\in\mrm{Spe}(W)$, there is $g\in W$ such that
\[
\alpha(h)=ghg^{-1}.
\]
Applying $\rho_A$ gives
\[
\alpha_A(h)=\rho_A(\alpha(h))=\rho_A(g)h\rho_A(g)^{-1}.
\]
Since $\rho_A(g)\in W_A$, the involution $\alpha_A(h)$ is $W_A$-conjugate to $h$. Therefore $\alpha_A\in\mrm{Spe}(W_A)$.
\end{proof}

\begin{theorem}\label{rank_two_local_conjugacy_thm}
Assume that $\Gamma$ is star-connected. Let $\alpha\in\mrm{Spe}(W)$ and let $a,b\in V(\Gamma)$. Then there is $w\in W$ such that
\[
\alpha(a)=waw^{-1}
\qquad\text{and}\qquad
\alpha(b)=wbw^{-1}.
\]
\end{theorem}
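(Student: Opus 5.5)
The plan is to reduce to a finite induced subgraph, where M\"uhlherr's theorem (Fact~\ref{partial_conj_fact}) is available, using the retraction of Lemma~\ref{special_retraction_lemma}.

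Since $\alpha\in\mrm{Spe}(W)$, fix $g,h\in W$ with $\alpha(a)=gag^{-1}$ and $\alpha(b)=hbh^{-1}$. Take a finite $A\subseteq V(\Gamma)$ containing $a$, $b$, $\mrm{sp}(g)$ and $\mrm{sp}(h)$, to be enlarged later. By Lemma~\ref{special_retraction_lemma}, $\alpha_A=\rho_A\circ\alpha|_{W_A}$ lies in $\mrm{Spe}(W_A)$. Because $g,h\in W_A$, the elements $\alpha(a)$ and $\alpha(b)$ already lie in $W_A$ and are fixed by $\rho_A$. So $\alpha_A(a)=\alpha(a)$ and $\alpha_A(b)=\alpha(b)$. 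It therefore suffices to find $w\in W_A$ with $\alpha_A(a)=waw^{-1}$ and $\alpha_A(b)=wbw^{-1}$.

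Since $A$ is finite, Fact~\ref{partial_conj_fact} writes $\alpha_A$ as a product of partial conjugations $\pi_{(s,C)}$ of $\Gamma_A$. I do not try to make $\Gamma_A$ fully star-connected. Instead I use the set
$$\mathcal{S}=\{\beta\in\mrm{Spe}(W_A): \exists x\in W_A,\ \beta(a)=xax^{-1},\ \beta(b)=xbx^{-1}\}.$$
I claim $\mathcal{S}$ is closed under left composition with any partial conjugation $\pi=\pi_{(s,C)}$ that acts on $\{a,b\}$ as $\mrm{ad}(s^\varepsilon)$ for a single $\varepsilon\in\{0,1\}$. If $\beta$ agrees with $\mrm{ad}(x)$ on $a,b$, then
$$\pi\beta(a)=\pi(x)\,s^\varepsilon a s^\varepsilon\,\pi(x)^{-1},$$
and likewise for $b$. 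Hence $\pi\beta$ agrees with $\mrm{ad}(\pi(x)s^\varepsilon)$ on $\{a,b\}$. The same computation works for inverses, since partial conjugations are involutions. Starting from the identity, it then suffices that every partial conjugation of $\Gamma_A$ is uniform on $\{a,b\}$.

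That uniformity is a graph condition on $A$. If $a\in N^*(s)$, then $s$ commutes with $a$, so conjugation by $s$ and the identity agree on $a$, and we may choose $\varepsilon$ to match $b$; the case $b\in N^*(s)$ is symmetric. Otherwise $a$ and $b$ both lie in $\Gamma_A\setminus N^*(s)$, and we need them in the same connected component there. So the requirement is:

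\medskip
\noindent($\dagger$) For every $s\in A$ with $a,b\notin N^*(s)$, the vertices $a$ and $b$ are joined by a path in $\Gamma_A\setminus N^*(s)$.
\medskip

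Star-connectedness of $\Gamma$ gives, for each such $s$, a path $P_s$ from $a$ to $b$ in $\Gamma\setminus N^*(s)$. The natural move is to add the vertices of every $P_s$ to $A$.

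The main obstacle is achieving ($\dagger$) with $A$ finite. Adding the paths $P_s$ introduces new vertices $s'$, which need their own paths, and the saturation process need not terminate. My first attempt is to show that only the finitely many $s$ in the original set $A_0=\{a,b\}\cup\mrm{sp}(g)\cup\mrm{sp}(h)$ matter. The plan is to replace $\alpha_A$ by a modified special automorphism of $W_A$ that agrees with it on $a$ and $b$, and whose M\"uhlherr decomposition uses only partial conjugations based at vertices of $\mrm{sp}(\alpha(a))\cup\mrm{sp}(\alpha(b))\subseteq A_0$. Fact~\ref{support_propagation_fact} would then control which bases $s$ occur: a base outside the supports should contribute trivially to $\{a,b\}$. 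If that fails, the fallback is to work directly with the double coset condition $g^{-1}h\in W_{N^*(a)}W_{N^*(b)}$. I would prove it by induction on $\ell(g)+\ell(h)$, stripping off a terminal letter $s$ and using the path $P_s$ together with Fact~\ref{support_propagation_fact} to show that $s$ can be moved into the appropriate centralizer.
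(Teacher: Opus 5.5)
Your framework matches the paper's. You retract to a finite $W_A$ with Lemma~\ref{special_retraction_lemma}, apply M\"uhlherr's generation theorem there, and use that the automorphisms inner on $\{a,b\}$ absorb every partial conjugation acting uniformly on $\{a,b\}$. Your reduction to ($\dagger$) is correct. But you never produce a finite $A$ satisfying ($\dagger$), and that is the entire content of the theorem.

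Your ``first attempt'' does not address the real obstruction. The troublesome bases are not those outside the supports. They are vertices $s\in\mrm{sp}(\alpha(a))\cap\mrm{sp}(\alpha(b))$ with $a,b\notin N^*(s)$, and M\"uhlherr's theorem gives no control over which decomposition you get. For example, let $a,b,s$ be pairwise non-adjacent in a star-connected $\Gamma$ (three points of a projective plane in $\Gamma(P)$), and take $\alpha=\mrm{ad}(s)$, $g=h=s$. Then $\Gamma_{A_0}$ is edgeless and $\pi_{(s,\{a\})}\in\mrm{Spe}(W_{A_0})$ is not inner on $\{a,b\}$, so $\mathcal{S}$ is a proper subset of $\mrm{Spe}(W_{A_0})$. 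Moreover, $\alpha_{A_0}=\pi_{(s,\{a\})}\pi_{(s,\{b\})}$ is a decomposition with both bases in the supports and neither factor uniform.

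The fallback induction on $\ell(g)+\ell(h)$ is only a sketch. It does not say which automorphism the inductive hypothesis is applied to. Also, Fact~\ref{support_propagation_fact} controls supports, not terminal letters.

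The missing idea is a normalization that makes ($\dagger$) vacuous.
\begin{enumerate}[(1)]
\item The automorphism $\mrm{ad}(g^{-1})\circ\alpha$ is special, and a simultaneous conjugator $t$ for it yields the conjugator $gt$ for $\alpha$. So you may assume $\alpha(a)=a$.
\item Take $A=\{a,b\}\cup\mrm{sp}(\alpha(b))$, so that $\alpha_A(a)=a$ and $\alpha_A(b)=\alpha(b)$.
\item Suppose some $s\in A$ had $a,b\notin N^*(s)$. Then $s\neq a,b$, so $s\in\mrm{sp}(\alpha(b))$. Star-connectedness puts $a$ and $b$ in one component of $\Gamma\setminus N^*(s)$, and Fact~\ref{support_propagation_fact} gives $s\in\mrm{sp}(\alpha(a))=\{a\}$, a contradiction.
\item Hence every $s\in A$ has $a$ or $b$ in $N^*_{\Gamma_A}(s)$. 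By your own case analysis, every partial conjugation of $W_A$ is uniform on $\{a,b\}$, so $\mathcal{S}=\mrm{Spe}(W_A)$ and no saturation is needed.
\end{enumerate}
This is exactly how the paper proceeds.
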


\begin{proof}
If $a=b$, the assertion follows immediately from the definition of $\mrm{Spe}(W)$. Thus we may assume that $a\neq b$. Since $\alpha\in\mrm{Spe}(W)$, choose $u\in W$ such that
\[
\alpha(a)=uau^{-1}.
\]
Replace $\alpha$ by $\alpha_0=\mrm{ad}(u^{-1})\circ\alpha$. Then $\alpha_0\in\mrm{Spe}(W)$ and $\alpha_0(a)=a$. If the theorem is proved for $\alpha_0$, say
\[
\alpha_0(a)=t a t^{-1}
\qquad\text{and}\qquad
\alpha_0(b)=t b t^{-1},
\]
then $ut$ works for the original automorphism $\alpha$. Hence w.l.o.g. we assume that
\[
\alpha(a)=a.
\]

\smallskip \noindent Let
\[
S=\{a,b\}\cup\mrm{sp}(\alpha(b)).
\]
This is a finite subset of $V(\Gamma)$. Let $\Gamma_S$ be the induced subgraph on $S$, and let $W_S$ be the corresponding standard parabolic subgroup. By Lemma~\ref{special_retraction_lemma}, the retraction $\rho_S:W\to W_S$ gives an induced special automorphism $\beta_S\in\mrm{Spe}(W_S)$ such that
\[
\beta_S(x)=\rho_S(\alpha(x))\quad (x\in W_S).
\]
Moreover,
\[
\beta_S(a)=a
\qquad\text{and}\qquad
\beta_S(b)=\alpha(b),
\]
because $\alpha(a)=a$ and $\mrm{sp}(\alpha(b))\subseteq S$.

\smallskip \noindent We first prove the key graph-theoretic claim.

\smallskip \noindent
\begin{claim}\label{crucial_claim1} For every $s\in S$, $
\{a,b\}\cap N^*_{\Gamma_S}(s)\neq\emptyset$.
\end{claim}

\medskip \noindent
\begin{claimproof} Suppose not. In particular, $s\neq a,b$. Since $s\in S\setminus\{a,b\}$, it follows that
\[
s\in\mrm{sp}(\alpha(b)).
\]
By the assumption, both $a$ and $b$ belong to $\Gamma\setminus N_{\Gamma}^*(s)$. Since $\Gamma$ is star-connected, they lie in the same connected component of $\Gamma\setminus N_{\Gamma}^*(s)$. By Fact~\ref{support_propagation_fact}, applied to this component and to the fact that $s\in\mrm{sp}(\alpha(b))$, we obtain
\[
s\in\mrm{sp}(\alpha(a)).
\]
But $\alpha(a)=a$, so $\mrm{sp}(\alpha(a))=\{a\}$. Hence $s=a$, contradicting $a \notin N^*_\Gamma(s)$. This proves the claim.
\end{claimproof}

\medskip \noindent Now we use M{\"u}hlherr's finite-rank theorem (i.e., the main result of \cite{muhlherr}) in the finite graph $\Gamma_S$. Since $S$ is finite, Fact~\ref{partial_conj_fact} says that $\mrm{Spe}(W_S)$ is generated by the partial conjugations of $W_S$. Let $\pi(s,C)$ be a partial conjugation of $W_S$, where $s\in S$ and $C$ is a union of connected components of 
$\Gamma_S\setminus N^*_{\Gamma_S}(s)$.
By Claim~\ref{crucial_claim1}, at least one of $a,b$ belongs to $N^*_{\Gamma_S}(s)$. 

\smallskip

\begin{claim}\label{crucial_claim2} If $s \in S$, then $\pi(s,C)$ is inner on the pair $\{a,b\}$.
\end{claim}

\smallskip

\begin{claimproof} If neither $a$ nor $b$ belongs to $C$, then $\pi(s,C)$ fixes both $a$ and $b$, so it agrees with the identity on $\{a,b\}$. If one of $a,b$ belongs to $C$, then the other belongs to $N^*_{\Gamma_S}(s)$, because at least one of $a,b$ belongs to $N^*_{\Gamma_S}(s)$ and $C$ is disjoint from $N^*_{\Gamma_S}(s)$. In this case $\pi(s,C)$ agrees with conjugation by $s$ on $\{a,b\}$: it sends the element in $C$ to its $s$-conjugate, and it fixes the element in $N^*_{\Gamma_S}(s)$, as does conjugation by $s$.
\end{claimproof} 

\smallskip
\noindent Let now $\mrm{Spe}_{\{a,b\}}(W_S)$ be the following set:
\[
\{\varphi\in\mrm{Spe}(W_S):\text{ there is }x \in W_S\text{ such that }\varphi(z)=xzx^{-1}\text{ for }z=a,b\}.
\]
\smallskip
\noindent
\begin{claim}\label{crucial_claim3} $\mrm{Spe}_{\{a,b\}}(W_S)$ is a subgroup of $\mrm{Spe}(W_S)$. 
\end{claim}

\smallskip
\noindent
\begin{claimproof}  If $\varphi$ is inner on $\{a,b\}$ via $x$ and $\psi$ is inner on $\{a,b\}$ via $y$, then for $z\in\{a,b\}$ we have
\[
(\varphi\circ\psi)(z)=\varphi(yzy^{-1})=\varphi(y)xzx^{-1}\varphi(y)^{-1},
\]
so $\varphi\circ\psi$ is inner on $\{a,b\}$ via $\varphi(y)x$. Also, if $\varphi$ is inner on $\{a,b\}$ via $x$, then
\[
z=\varphi^{-1}(x)\varphi^{-1}(z)\varphi^{-1}(x)^{-1}
\]
for $z\in\{a,b\}$, and so $\varphi^{-1}$ is inner on $\{a,b\}$ via $\varphi^{-1}(x)^{-1}$.
\end{claimproof}

\smallskip \noindent Now, by Claim~\ref{crucial_claim2} every partial conjugation of $W_S$ belongs to $\mrm{Spe}_{\{a,b\}}(W_S)$. Since $\mrm{Spe}(W_S)$ is generated by these partial conjugations (cf. Fact~\ref{partial_conj_fact}) and $\mrm{Spe}_{\{a,b\}}(W_S)$ is a subgroup of $\mrm{Spe}(W_S)$, we have $\mrm{Spe}_{\{a,b\}}(W_S) = \mrm{Spe}(W_S)$. In particular, $\beta_S \in \mrm{Spe}_{\{a,b\}}(W_S)$. Therefore, there is $w\in W_S$ such that
\[
\beta_S(a)=waw^{-1}
\qquad\text{and}\qquad
\beta_S(b)=wbw^{-1}.
\]
Using $\beta_S(a) = a$ and $\beta_S(b) = \alpha(b)$, we get
\[
a=waw^{-1}
\qquad\text{and}\qquad
\alpha(b)=wbw^{-1}.
\]
Recalling that w.l.o.g. we were assuming that $\alpha(a) = a$, we are done.
\end{proof}

We now pass from two vertices to finitely many vertices using the following Helly property for cosets of standard parabolic subgroups, i.e., Lemma \ref{parabolic_coset_helly_lemma}.

\begin{lemma}\label{lemma_special_subgroup_convex}
    For any $S \subseteq V(\Gamma)$, the Cayley graph $\mrm{Cay}(W_{S},S)$ is a convex subgraph of $\mrm{Cay}(W(\Gamma),V(\Gamma))$.
\end{lemma}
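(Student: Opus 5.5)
The plan is to show that every geodesic in $\mrm{Cay}(W(\Gamma),V(\Gamma))$ between two vertices of $W_S$ uses only edges labelled by elements of $S$. Then every such geodesic stays inside $W_S$, and its edges are edges of $\mrm{Cay}(W_S,S)$. By left-invariance of the Cayley graph it suffices to consider geodesics starting at $e$ and ending at some $g\in W_S$. Such a geodesic corresponds to a reduced word $v_1\cdots v_n$ in the generators $V(\Gamma)$ representing $g$, with $n=\ell(g)$. Its vertices are the prefixes $v_1\cdots v_k$. So the claim reduces to the following: \emph{every reduced word for an element of $W_S$ involves only letters from $S$}.

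To prove this I would use the well-definedness of the support $\mrm{sp}(w)$ from the Context: all reduced words for $w$ involve the same set of letters. This is the standard consequence of Tits' solution of the word problem, namely that any two reduced words are related by commutations of adjacent commuting letters, which do not change the letter set. Since $g\in W_S$, $g$ is represented by some word in $S$. Reducing it by deletions $ss\to\emptyset$ and commutations, neither of which introduces new letters, gives a reduced word in $S$. Hence $\mrm{sp}(g)\subseteq S$, and by well-definedness every reduced word for $g$ has letters in $S$. Consequently each prefix $v_1\cdots v_k$ lies in $W_S$, and each edge between consecutive prefixes is labelled by $v_{k+1}\in S$. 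So the geodesic is a path in $\mrm{Cay}(W_S,S)$.

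One further point is needed: convexity of the subgraph also requires that distances agree, i.e. that $\ell_S(g)=\ell(g)$ for $g\in W_S$, where $\ell_S$ is word length with respect to $S$. This follows from the previous paragraph, since a reduced word for $g$ over $V(\Gamma)$ is already a word over $S$, so $\ell_S(g)\le\ell(g)$; the inequality $\ell(g)\le\ell_S(g)$ is trivial. Together with left-invariance, this shows that every geodesic of the ambient Cayley graph joining two points of $W_S$ lies in $\mrm{Cay}(W_S,S)$.

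The main obstacle is the fact that the support is independent of the chosen reduced word. I would take it from the cited reference \cite[Section~3.3]{pao1}, or from Tits' theorem on reduced words in Coxeter groups, rather than reprove it. For infinite $\Gamma$ this causes no difficulty, because any single word involves only finitely many generators, so one can work inside a finitely generated standard parabolic subgroup.
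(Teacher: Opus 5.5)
Your proposal is correct and takes essentially the same approach as the paper, which reduces convexity in one line to the fact that every shortest word representing an element of $W_S$ uses only letters from $S$, citing \cite[Corollary~4.1.2]{davis}. You additionally derive that fact from the well-definedness of $\mrm{sp}$ (via Tits' solution of the word problem) and make explicit that $\ell_S=\ell$ on $W_S$; both steps are sound.
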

\begin{proof}
    This follows from the fact that any shortest word representing an element of the standard parabolic subgroup $W_{S}$ contains only elements of $S$ (see \cite[Corollary 4.1.2]{davis}).
\end{proof}

\begin{definition}
    Let $(X,d)$ be a metric space. For any two points $x,y \in X$, we let $$I(x,y) = \{z \in X : d(x,z) + d(z,y) = d(x,y) \}.$$ $X$ is called \textit{median} if for all $x,y,z \in X$, there is a unique point in the intersection $$I(x,y) \cap I(x,z) \cap I(y,z).$$
\end{definition}

\begin{lemma}\label{lemma_median_cayley_graph}
    The Cayley graph $\mrm{Cay}(W(\Gamma),V(\Gamma))$ is a median space. 
\end{lemma}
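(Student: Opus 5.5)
The plan is to identify $\mrm{Cay}(W(\Gamma),V(\Gamma))$ with the $1$-skeleton of a CAT(0) cube complex, namely the Davis complex of the right-angled Coxeter group, and then to invoke the standard theorem (Chepoi, Roller, Gerasimov) that $1$-skeleta of CAT(0) cube complexes are exactly the median graphs.

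\textbf{Step 1: cube structure.} Build the complex $\Sigma$ whose vertices are the elements of $W$. Its cubes are the cosets $gW_C$, where $C$ ranges over the finite cliques of $\Gamma$. Each $W_C\cong(\mathbb{Z}/2)^{|C|}$, so its Cayley graph on $C$ is the $1$-skeleton of a $|C|$-cube. The $1$-skeleton of $\Sigma$ is then exactly the Cayley graph.

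\textbf{Step 2: CAT(0).} Verify Gromov's link condition: the link of every vertex should be a flag simplicial complex. The link at $g$ is the flag complex of $\Gamma$, with vertex set $V(\Gamma)$ and simplices the finite cliques. That complex is flag by definition, since pairwise adjacency of a finite set makes it a clique.

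Simple connectivity comes from the presentation. The $2$-cells are the squares $g\langle u,v\rangle$ for commuting $u,v$, and the relations $v^2=1$ are accounted for by the way edges are identified. Hence the $2$-skeleton is the Cayley complex of the presentation, which is simply connected. Infinite rank causes no difficulty, because Gromov's criterion is local and every cube is finite-dimensional. Alternatively, any loop lies in a finite subcomplex $\Sigma_S$ for a finite $S$, and by Lemma \ref{lemma_special_subgroup_convex} this reduces the question to finite rank, where the result is classical (\cite{davis}).

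\textbf{Step 3 (an alternative avoiding the big theorem).} Argue directly with walls. For each vertex $v$ and $g\in W$, the edges labelled by a conjugate of $v$ determine a hyperplane $H_{gvg^{-1}}$ that separates $W$ into two halfspaces. The combinatorial distance $d(x,y)=\ell(x^{-1}y)$ equals the number of walls separating $x$ from $y$, since a reduced word crosses each wall at most once by Tits' solution of the word problem. Walls are pairwise either nested, crossing, or disjoint, giving a Sageev-style wallspace. In a wallspace whose walls have finitely many walls separating any two points, the median of $x,y,z$ is the point lying on the majority side of every wall. Its existence uses the fact that majority choices are pairwise consistent (Helly property in dimension $1$). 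Uniqueness is immediate because points are determined by their halfspaces.

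The main obstacle is showing that the majority orientation is actually realized by an element of $W$, rather than being only a consistent ultrafilter. For this I would use the fact that the orientation differs from that of $x$ on only finitely many walls, all among those separating $x$ from $y$ or $z$, together with the standard fact that finite-distance consistent orientations correspond to vertices. The cleanest route is therefore to cite Chepoi's theorem after Steps 1 and 2, which are the parts I would write out.
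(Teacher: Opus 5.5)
Your proof is correct in outline, but it handles infinite rank differently from the paper. The paper uses the Davis complex only in finite rank, where it is a finite-dimensional, locally finite CAT(0) cube complex and so its $1$-skeleton is median. It then reduces the general case to finite rank one triple at a time. For $x,y,z\in W$ it puts $S=\mrm{sp}(x)\cup\mrm{sp}(y)\cup\mrm{sp}(z)$. Since $\mrm{Cay}(W_S,S)$ is convex (Lemma \ref{lemma_special_subgroup_convex}), the intervals $I(x,y)$, $I(x,z)$, $I(y,z)$ and the distances involved are the same whether computed in $W$ or in $W_S$. Uniqueness of the median is therefore inherited from finite rank.

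You instead build the Davis complex $\Sigma$ for all of $W$ at once. You check flag links and simple connectivity by hand, and both checks are fine. You then apply Chepoi--Roller globally. This gives you a cube complex structure on the whole Cayley graph, but it requires the infinite-dimensional version of that theorem. If $\Gamma$ has finite cliques of unbounded size, $\Sigma$ is infinite-dimensional.

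Your justification that ``Gromov's criterion is local and every cube is finite-dimensional'' does not cover this case. The link condition gives only local CAT(0). Passing to global CAT(0) via Cartan--Hadamard needs a complete geodesic metric, and Bridson's theorem supplies that only when there are finitely many shapes of cells. The conclusion is still true, but you need the right citation. One option is Chepoi's combinatorial theorem (a simply connected cube complex with flag links has median $1$-skeleton), whose proof makes no dimension assumption. The other is Leary's extension of Gromov's criterion to infinite-dimensional cube complexes.

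Your ``alternatively'' reduction to a finite $S$ only addresses simple connectivity, so it does not fix this point. Applying the same reduction to the median triple instead of to loops is exactly the paper's argument, and it avoids infinite-dimensional complexes entirely.

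Step 3 is, as you say, incomplete. The fact that consistent orientations at finite distance are realized by vertices amounts to saying the Cayley graph is the $1$-skeleton of the Sageev complex of its walls. That is essentially as deep as the lemma itself, so you are right not to rely on it.
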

\begin{proof}
    If $V(\Gamma)$ is finite, the Cayley graph $\mrm{Cay}(W(\Gamma),V(\Gamma))$ is the $1$-skeleton of the Davis complex of $W$ (see \cite[Proposition 7.3.4]{davis}). As $W$ is right-angled, its Davis complex is a $\mrm{CAT}(0)$ cube complex (see \cite[Theorem 12.2.1]{davis}), and hence its $1$-skeleton is a median graph.

    \smallskip \noindent It follows from the finite-rank case that $\mrm{Cay}(W(\Gamma),V(\Gamma))$ is median, even when $V(\Gamma)$ is infinite. Indeed, for any three vertices $x,y,z \in W(\Gamma)$, let $S = \mrm{sp}(x) \cup \mrm{sp}(y) \cup \mrm{sp}(z)$. Since standard parabolic subgroups are convex (Lemma \ref{lemma_special_subgroup_convex}), the intervals $I(x,y)$, $I(x,z)$, and $I(y,z)$ all lie in the subgraph $\mrm{Cay}(W_{S},S)$. Since $S$ is finite, these intervals have a unique intersection by the median property in the finite-rank case.
\end{proof}

\begin{lemma}\label{parabolic_coset_helly_lemma}
Let $X_1,\ldots,X_n\subseteq V(\Gamma)$, and let
\[
g_iW_{X_i}\qquad (1\leq i\leq n)
\]
be left cosets of standard parabolic subgroups of $W$. If these cosets intersect pairwise, then
\[
\bigcap_{i=1}^n g_iW_{X_i}\neq\emptyset.
\]
\end{lemma}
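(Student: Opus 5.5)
We plan to prove Lemma \ref{parabolic_coset_helly_lemma} by induction on $n$, using the median structure of $\mrm{Cay}(W(\Gamma),V(\Gamma))$ (Lemma \ref{lemma_median_cayley_graph}) together with the convexity of cosets of standard parabolic subgroups. The cases $n\leq 2$ are trivial. First we reduce to finite rank. Pick witnesses $h_{ij}\in g_iW_{X_i}\cap g_jW_{X_j}$ and let $S$ be the (finite) union of the supports of all $h_{ij}$ and of $g_i$. Replacing $g_i$ by $h_{i1}$ (or any element of the coset), each coset $g_iW_{X_i}$ meets $W_S$ in $g_i W_{X_i\cap S}$ when $g_i\in W_S$, using Lemma \ref{finite_star_base_lemma}(3) for $W_S\cap W_{X_i}=W_{X_i\cap S}$. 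Pairwise intersections persist inside $W_S$ since all $h_{ij}\in W_S$. So it suffices to work in $W_S$, although the median argument below in fact works directly in $W$.

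The key geometric input is that each coset $gW_X$ is a convex subset of the Cayley graph. By left-invariance of the word metric, this reduces to convexity of $W_X$, which is Lemma \ref{lemma_special_subgroup_convex}: geodesics between points of $W_X$ are reduced words in letters of $X$, so the vertices they pass through stay in $W_X$. Here a subset $C$ is convex if $I(x,y)\subseteq C$ for all $x,y\in C$.

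Next we prove the Helly property for convex subsets of a median graph, which is standard. For three pairwise intersecting convex sets $C_1,C_2,C_3$, choose $x\in C_1\cap C_2$, $y\in C_2\cap C_3$, and $z\in C_1\cap C_3$, and let $m$ be the median of $x,y,z$. Since $m\in I(x,z)\subseteq C_1$, $m\in I(x,y)\subseteq C_2$, and $m\in I(y,z)\subseteq C_3$, the point $m$ lies in the triple intersection. For general $n$, we induct. Intersections of convex sets are convex, so given $C_1,\dots,C_n$ pairwise intersecting, the sets $C_1\cap C_n,\dots,C_{n-1}\cap C_n$ are convex. They also pairwise intersect, because $C_i\cap C_j\cap C_n\neq\emptyset$ by the three-set case. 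Applying the induction hypothesis to these $n-1$ convex sets gives a common point, which lies in all of $C_1,\dots,C_n$.

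The step we expect to need the most care is the convexity of the cosets, specifically checking that Lemma \ref{lemma_special_subgroup_convex} really yields geodesic convexity of $gW_X$ in the full (possibly infinite-rank) Cayley graph, since the definition of median uses $I(x,y)$ in the ambient metric. One must verify that every vertex on a geodesic from $x$ to $y$ in $W_X$ is a prefix of a reduced word for $x^{-1}y$ (after translation), and that such prefixes lie in $W_X$. This follows because all reduced expressions of an element of $W_X$ use only letters of $X$ (by \cite[Corollary 4.1.2]{davis}), hence so do their prefixes. Once this is settled, the median Helly argument is purely formal.
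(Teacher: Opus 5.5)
Your proposal is correct and follows the paper's route. Both arguments combine the median property of the Cayley graph (Lemma \ref{lemma_median_cayley_graph}) with the convexity of the translates $g_iW_{X_i}$ (Lemma \ref{lemma_special_subgroup_convex}) and the Helly property for convex sets in a median space; the differences are that you prove the Helly property yourself (median of three points, then induction on the convex sets $C_i\cap C_n$) where the paper cites gatedness from the literature, and that your preliminary reduction to $W_S$ is harmless but unnecessary, since Lemma \ref{lemma_median_cayley_graph} already covers infinite rank.
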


\begin{proof}
By Lemma \ref{lemma_median_cayley_graph}, the Cayley graph $\mrm{Cay}(W(\Gamma), V(\Gamma))$ is a median space. By Lemma \ref{lemma_special_subgroup_convex}, each $\mrm{Cay}(W_{X_i}, X_i)$ is a convex subgraph, and hence so are each of the translates $g_i\mrm{Cay}(W_{X_i}, X_i)$. Convex subsets of median graphs are gated, and so enjoy the finite Helly property: any finite family of pairwise intersecting gated sets has a non-empty intersection (cf. \cite{helly} and references therein). This intersection must contain a vertex of $\mrm{Cay}(W(\Gamma), V(\Gamma))$, i.e., an element of $W(\Gamma)$. Therefore, the pairwise intersecting cosets $g_1W_{X_1},\dots,g_nW_{X_n}$ from the statement of the lemma have a common element.

\end{proof}

\begin{proof}[Proof of Theorem~\ref{density_theorem}]
First suppose that $\Gamma$ is star-connected. We prove that every element of $\mrm{Spe}(W)$ is locally inner. Let $\alpha\in\mrm{Spe}(W)$ and let $A\subseteq V(\Gamma)$ be finite. For each $a\in A$, set
\[
K_a=\{w\in W:\alpha(a)=waw^{-1}\}.
\]
Since $\alpha$ is special, each $K_a$ is non-empty. Fix $g_a\in K_a$. For $w\in W$ we have $\alpha(a)=waw^{-1}$ if and only if $waw^{-1}=g_aag_a^{-1}$, i.e.\ $(g_a^{-1}w)\,a\,(g_a^{-1}w)^{-1}=a$, i.e.\ $g_a^{-1}w\in C_W(a)$; equivalently $w\in g_aC_W(a)$. Hence, using $C_W(a)=W_{N^*(a)}$ (use Lemma~\ref{finite_star_base_lemma}(1) for $A=\{a\}$), we have that
\[
K_a=g_aC_W(a)=g_aW_{N^*(a)},
\]
so $K_a$ is a left coset of a standard parabolic subgroup.

\smallskip \noindent By Theorem~\ref{rank_two_local_conjugacy_thm}, for every $a,b\in A$ there is $w\in W$ such that
\[
\alpha(a)=waw^{-1}
\qquad\text{and}\qquad
\alpha(b)=wbw^{-1}.
\]
Equivalently, $K_a\cap K_b\neq\emptyset$. Thus the finite family $(K_a:a\in A)$ is pairwise intersecting, and so, by Lemma~\ref{parabolic_coset_helly_lemma}, we conclude that $\bigcap_{a\in A}K_a\neq\emptyset$.
Choose $w$ in this intersection. Then $\alpha(a)=waw^{-1}$ for every $a\in A$. Hence $\alpha$ is locally inner. By Lemma~\ref{locally_inner_closure_lemma}, $\alpha\in\overline{\mrm{Inn}(W)}$. Since $\alpha\in\mrm{Spe}(W)$ was arbitrary, we get the desired
\[
\overline{\mrm{Inn}(W)}=\mrm{Spe}(W).
\]
\end{proof}

We can now combine the ``Density Theorem'' (Theorem~\ref{density_theorem}) with Theorem~\ref{theorem:inn_closed}, to prove Theorem \ref{theorem_spe_inn}.

\begin{proof}[Proof of Theorem \ref{theorem_spe_inn}]
Assume first that $\mrm{Inn}(W)=\mrm{Spe}(W)$. Then $\Gamma$ is star-connected, since this is a necessary condition for $\mrm{Inn}(W)=\mrm{Spe}(W)$ (cf. e.g. \cite[Theorem~2.8]{pao2}). Furthermore, Proposition \ref{Spe_closed} implies $\mrm{Spe}(W)=\mrm{Inn}(W)$ is closed, so Theorem~\ref{theorem:inn_closed} implies $\Gamma$ has the finite-star-base property. Thus \eqref{theorem_spe_inn_1} implies \eqref{theorem_spe_inn_2}.

\smallskip \noindent Conversely, assume \eqref{theorem_spe_inn_2}. Since $\Gamma$ is star-connected, Theorem~\ref{density_theorem} gives
\[
\overline{\mrm{Inn}(W)}=\mrm{Spe}(W).
\]
Since $\Gamma$ has the finite-star-base property, Theorem~\ref{theorem:inn_closed} implies
that $\mrm{Inn}(W)$ is closed in $\mrm{Spe}(W)$. Thus,
\[
\mrm{Spe}(W)=\overline{\mrm{Inn}(W)}=\mrm{Inn}(W).
\]
Therefore \eqref{theorem_spe_inn_2} implies \eqref{theorem_spe_inn_1}.

\smallskip \noindent Conditions \eqref{theorem_spe_inn_2} and \eqref{theorem_spe_inn_3} are equivalent by Theorem \ref{theorem:inn_closed}.
\end{proof}

\section{Proof of the main theorem}\label{section_main_theorem}

The purpose of this section is to prove Theorem \ref{thm:main} and Theorem \ref{thm:stronger}. The idea of the proof is to construct, for every non-Archimedean Polish group $G$, a countable graph $\hat{\Gamma}$ such that the  Tits decomposition yields $$\mrm{Aut}(W(\hat{\Gamma})) = \mrm{Inn}(W(\hat{\Gamma})) \rtimes G.$$ 

We now recall the definition of a projective plane, since this will be relevant for the construction of the graph $\hat{\Gamma}$. We will only consider non-degenerate projective planes.

\begin{notation}
    A (non-degenerate) projective plane $P$ is a pair of sets $A,B$ (called \textit{points} and \textit{lines}, respectively) together with a relation on the set $A \cup B$ called \textit{incidence} satisfying the following: \begin{enumerate}[(1)]
        \item Given any two distinct points $p_1,p_2\in A$, there is a unique line $\ell$ incident to both of them, denoted $p_1\vee  p_2$.
        \item Given any two distinct lines $\ell_1, \ell_2 \in B$, there is a unique line incident to both of them, denoted $\ell_1 \wedge \ell_2$.
        \item There are at least four points, with no three belonging to the same line.
    \end{enumerate}
    For any projective plane $P$, we denote by $\Gamma(P)$ the associated (bipartite) graph, with vertex set $A \cup B$, and an edge between $p \in A$ and $\ell \in B$ if and only if $p \in \ell$.
\end{notation}

\begin{lemma}\label{lemma_proj_plane_graph_properties}
    If $P$ is a (non-degenerate) projective plane, then $\Gamma(P)$ has the following properties:
    \begin{enumerate}[(1)]
        \item $\Gamma(P)$ is bipartite; equivalently, $\Gamma(P)$ contains no odd-cycles;
        \item $\Gamma(P)$ is square-free;
        \item $\Gamma(P)$ is star-connected;
        \item $\Gamma(P)$ has the star-property;
        \item $\Gamma(P)$ contains the path $P_4$ as an induced subgraph.
    \end{enumerate}
\end{lemma}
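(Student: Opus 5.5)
The plan is to check the five properties one at a time, directly from the projective plane axioms. Throughout, $N(p)$ for a point $p$ is the set of lines through $p$, and $N(\ell)$ for a line $\ell$ is the set of points on $\ell$. (Note that axiom (2) should read ``a unique point incident to both lines''.) Item (1) is immediate, since every edge joins a point to a line. For (2), a square is a $4$-cycle $p_1\ell_1p_2\ell_2$ with $p_1\neq p_2$ and $\ell_1\neq\ell_2$. It would give two distinct lines through two distinct points, contradicting the uniqueness of $p_1\vee p_2$.

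For (4), take distinct vertices $v,u$; we must show $N^*(v)\not\subseteq N^*(u)$. If $v$ and $u$ are on the same side (say both points), then $v\in N^*(v)\setminus N^*(u)$, since distinct points are not adjacent. If $v$ is a point and $u$ a line, then either $v\notin u$, in which case again $v\notin N^*(u)$, or $v\in u$. In the latter case we need a line through $v$ other than $u$. We get one from non-degeneracy: every point lies on at least three lines, because the four points in general position project to at least three distinct lines through any point. Such a line $\ell\neq u$ through $v$ lies in $N^*(v)$ but is not adjacent to the line $u$, so $\ell\notin N^*(u)$. The case of $v$ a line and $u$ a point is dual.

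For (3), fix a point $p$; lines follow by duality, since the dual of a non-degenerate plane is non-degenerate. Then $\Gamma\setminus N^*(p)$ consists of all points other than $p$ together with all lines not through $p$. Take two points $q,r\neq p$. If $q,r,p$ are not collinear, then $q\vee r$ avoids $p$, so $q$ and $r$ are joined through that line. If they are collinear, choose a point $s$ off the line $q\vee p$; such a point exists by non-degeneracy. Then $q\vee s$ and $s\vee r$ avoid $p$, which gives a path $q,\ q\vee s,\ s,\ s\vee r,\ r$. Every remaining vertex is a line not through $p$, and it contains at least one point, necessarily $\neq p$. So the complement is connected. The step needing most care is this non-degeneracy bookkeeping: showing that every line has at least three points, that every point lies on at least three lines, and that the point $s$ exists. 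I would derive these lemmas once from axiom (3) before treating (3) and (4).

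For (5), take a line $\ell$ with two distinct points $p,q$ on it, and a second line $m\neq\ell$ through $q$. Let $r$ be a point of $m$ other than $q$ (lines have at least two points). Then $p-\ell-q-m$ and $\ell-q-m-r$ are paths, and $p-\ell-q-m-r$ is even an induced $P_5$, which contains an induced $P_4$. The only non-edges to verify are $p\notin m$ and $r\notin\ell$. Both follow because $\ell\wedge m=q$ is unique and $p,r\neq q$; the other non-adjacencies hold by bipartiteness.
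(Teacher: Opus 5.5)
Your proof is correct. For item (3), the only item the paper actually verifies (it leaves the others as routine), you follow essentially the paper's argument: join $q,r$ through $q\vee r$ when that line misses $p$, and otherwise detour through a point off that line. The one difference is that you attach each line of $\Gamma\setminus N^*(p)$ to one of its points, where the paper instead joins two lines through $\ell_1\wedge\ell_2\neq p$.

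One small slip in (4): through a diagonal point such as $(a\vee b)\wedge(c\vee d)$, the four quadrangle points span only two lines, so your stated justification of ``at least three lines through every point'' fails there. The projection argument does give at least two lines, since each line contains at most two of the four points, and two lines is all that (4) needs.
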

\begin{proof}
    We only verify that $\Gamma(P)$ is star-connected. By duality it suffices to deal only with the case of points. Let $p$ be a point and let $\Gamma \setminus N^*(p) = Q$. Let $x$ and $y$ be points of $Q$. If $\ell_1 = x \vee y$ does not contain $p$ we are done, since then $x \rightarrow \ell_1 \rightarrow y$ is a path in $Q$. Suppose then that $\ell_1 \notin Q$, i.e., that $\ell_1$ contains the point $p$. Since $P$ is non-degenerate we can find $z \notin \ell_1$. Notice that $z \notin \{x, y\}$. Let $\ell_2 = x \vee z$, then $\ell_2$ does not contain $p$, since otherwise $z \in \ell_1$. Let $\ell_3 = y \vee z$, then $\ell_3$ does not contain $p$ (argue as before) and $\ell_3 \neq \ell_2$, since otherwise $\ell_3 = \ell_2 = x \vee y = \ell_1$. This shows that $x \rightarrow x \vee z \rightarrow z \rightarrow y \vee z \rightarrow y$ is a path in $Q$. Let $\ell_1 \neq \ell_2 \in Q$ be lines, then $\ell_1 \wedge \ell_2 \neq p$ and so $\ell_1 \rightarrow \ell_1 \wedge \ell_2 \rightarrow \ell_2$ is a path in $Q$.
\end{proof}

\begin{lemma}\label{lemma_graph_construction}
    For every graph $\Gamma$ there exists a graph $\hat{\Gamma}$ of size $|\Gamma| + \aleph_0$ such that the following holds:
	\begin{enumerate}[(1)]
	\item $\hat{\Gamma}$ is star-connected, it is triangle-free, it contains the path $P_4$ as an induced subgraph and it has the star-property;
	\item $\mrm{Aut}(\Gamma) \cong_{\mrm{top}} \mrm{Aut}(\hat{\Gamma})$.
    \item The map $\Gamma \mapsto \hat{\Gamma}$ is Borel.
    \end{enumerate}

\end{lemma}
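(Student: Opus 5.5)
We build $\hat{\Gamma}$ by coding $\Gamma$ into a rigid, highly homogeneous ``host'' graph coming from projective planes, using Lemma~\ref{lemma_proj_plane_graph_properties}. Concretely, for each vertex $v$ of $\Gamma$ we attach a copy $\Gamma(P_v)$ of the incidence graph of a fixed countable rigid projective plane $P_0$ (for instance the free projective completion of a suitable rigid configuration, generated in an explicit, Borel-definable way). For each edge $\{u,v\}$ of $\Gamma$ we attach a further rigid gadget $\Gamma(Q_{uv})$ built from a second plane $Q_0$ that is non-isomorphic to $P_0$, joined to designated vertices of $\Gamma(P_u)$ and $\Gamma(P_v)$ by paths of a fixed length $k\ge 3$. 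All these pieces are then glued to a single central copy of a third plane incidence graph $\Gamma(R_0)$ so that the whole graph stays connected. Taking $k$ even and joining points to lines keeps every attachment path compatible with a bipartition, which gives triangle-freeness. The induced $P_4$ is inherited from Lemma~\ref{lemma_proj_plane_graph_properties}(5).

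For star-connectedness, note that removing $N^*(x)$ for a vertex $x$ inside a plane piece leaves that piece connected, by Lemma~\ref{lemma_proj_plane_graph_properties}(3). Every gadget is attached at several vertices, so at least one attachment survives. A vertex $x$ on an attachment path has only two neighbours, so deleting its star does not disconnect anything, provided every gadget is attached by at least two disjoint paths to the rest of the graph. We will therefore double all attachments.

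For the star-property, the plane pieces satisfy it internally by Lemma~\ref{lemma_proj_plane_graph_properties}(4). On a path of length $\ge 3$ in a triangle-free graph, no closed neighbourhood contains another. Square-freeness of the planes, together with choosing path lengths that create no $4$-cycles through the attachments, excludes $N^*(x)\subseteq N^*(y)$ for $x\neq y$.

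For item (2), we identify $\mrm{Aut}(\hat{\Gamma})$ with $\mrm{Aut}(\Gamma)$. Every $\sigma\in\mrm{Aut}(\Gamma)$ induces an automorphism $\hat\sigma$ by permuting the gadgets $P_v$ and $Q_{uv}$ through the fixed isomorphisms. Conversely, we must show that any automorphism of $\hat{\Gamma}$ maps each plane piece onto a plane piece of the same type. For this we use an isomorphism-invariant recognition: plane vertices are exactly those lying on many $6$-cycles (triangles of the plane), while path vertices have degree $2$ and lie on no short cycles. Rigidity of $P_0$ and $Q_0$, together with the non-isomorphism of $P_0,Q_0,R_0$, then forces every automorphism to be of the form $\hat\sigma$. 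The map $\sigma\mapsto\hat\sigma$ is a continuous bijective homomorphism between Polish groups when $\Gamma$ is countable. It is continuous because a finite part of $\hat{\Gamma}$ is determined by finitely many vertices of $\Gamma$, so it is a topological isomorphism by the open mapping theorem, as in Lemma~\ref{lemma_top_decomp}. For uncountable $\Gamma$ we instead check directly that both the map and its inverse are continuous in the pointwise topology.

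For item (3), the construction is uniform: vertices of $\hat{\Gamma}$ are coded by natural numbers according to a fixed recipe from $(v,\text{gadget vertex})$ and $(\{u,v\},\text{gadget vertex})$. Each edge query of $\hat{\Gamma}$ is therefore a Boolean combination of finitely many edge queries of $\Gamma$, which makes $\Gamma\mapsto\hat{\Gamma}$ continuous and hence Borel. To keep the vertex set always equal to $\omega$, we code non-edges uniformly as well, for instance by attaching a different rigid gadget to each pair depending on whether it is an edge.

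The main obstacle is the recognition step in (2): showing that no automorphism of $\hat{\Gamma}$ mixes plane pieces with attachment paths, while at the same time preserving star-connectedness. I would first try to choose $P_0,Q_0,R_0$ as rigid countable planes with pairwise distinct local invariants, such as the minimal degree of points. I would then prove that plane vertices are characterized as the vertices of degree $\ge 3$ lying on a $6$-cycle.
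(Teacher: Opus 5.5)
Your route is genuinely different from the paper's, and once several choices you leave open are pinned down it can be made to work. The paper never codes $\Gamma$ by hand. It takes the plane $P^*_\Gamma$ of \cite[Theorem~3]{pao3}, which already satisfies $\mrm{Aut}(P^*_\Gamma)\cong_{\mrm{top}}\mrm{Aut}(\Gamma)$ and depends on $\Gamma$ in a Borel way, and passes to its incidence graph, which has all the properties of Lemma~\ref{lemma_proj_plane_graph_properties}. It then only has to kill correlations. It does this by hanging an asymmetric ladder $L_{p,\ell}$ on each incident pair and recognizing the ladders through their squares, using that the incidence graph has no $4$- or $5$-cycles.

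You instead use planes only as rigid, star-connected building blocks and carry out a gadget coding of $\Gamma$ yourself. This needs only the existence of a few rigid countable planes (still a free-completion argument, but far less than \cite{pao3}), and it makes Borelness transparent. The cost is that you must check star-connectedness across the gluing by hand. That is exactly the delicate point of any gadget construction. In the paper's $L_{p,\ell}$, which meets the rest of the graph only in $\{p,\ell\}$, let $t$ be the neighbour of $p$ on the ladder. Deleting $N^*(t)$ removes the two other neighbours of $t$, and these are the only vertices through which the remainder of the ladder is attached, so the tail becomes a separate component. Your insistence on redundant attachments is what avoids this.

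The choices you must fix are the following.

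(i) Attachments must be $\mrm{Aut}(\Gamma)$-equivariant.
Each $P_v$ must receive all edge gadgets at the same designated vertices. Inside $Q_{uv}$, the paths from $P_u$ and from $P_v$ must land on the same designated vertices; otherwise you have oriented the edge, and since $Q_0$ is rigid, automorphisms of $\Gamma$ swapping $u$ and $v$ do not lift. All $P_v$ must also hang on the same designated vertices of $R_0$, which every automorphism fixes pointwise.

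(ii) Redundancy must come from distinct attachment vertices, not from doubling a path between the same endpoints. Two parallel paths can be swapped, giving automorphisms of $\hat\Gamma$ not induced by $\Gamma$. A choice that works is to fix three non-collinear points $d_1,d_2,d_3$ in each model plane and, for each pair of adjacent pieces, join $d_i$ to $d_i$ for $i=1,2,3$. Then $N^*(x)$ misses some $d_i$ for every plane vertex $x$. Since the incidence graph of a plane stays connected after deleting one vertex, deleting the star of a path vertex is harmless as well.

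(iii) ``Rigid'' must refer to the incidence graph, since a plane without nontrivial collineations can still carry a polarity. Likewise $P_0,Q_0,R_0$ must have pairwise non-isomorphic incidence graphs, that is, they must be distinct also up to duality.

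(iv) The recognition step you call the main obstacle is easy. Plane vertices have degree at least $3$ and internal path vertices have degree $2$. So for paths of length at least $2$, the pieces are exactly the components of the subgraph induced on the vertices of degree at least $3$. Similarly, in a triangle-free graph the star-property just says there is no vertex of degree $1$, so no cycle-length bookkeeping is needed for it.
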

\begin{proof}
    Let $P=P_{\Gamma}^*$ be the projective plane constructed in \cite[Theorem 3]{pao3}. Although it is not explicitly stated in \cite{pao3}, it is clear from the construction that the isomorphism $\mrm{Aut}(\Gamma) \cong \mrm{Aut}(P)$ exhibited there is topological. Let $\Gamma'=\Gamma(P)$ be the associated bipartite graph, let $A \subseteq V(\Gamma')$ be the set of vertices corresponding to points of $P$, and let $B \subseteq V(\Gamma')$ be the set of vertices corresponding to lines of $P$. Clearly $\mrm{Aut}(P) \leq \mrm{Aut}(\Gamma')$, but a priori there may be automorphisms of $\Gamma'$ that interchange $A$ and $B$. To remedy this, we create a new graph $\hat{\Gamma}$ by replacing each $p \in A, \ell \in B$ which satisfy $p R_{\Gamma'}\ell$ with the graph $L_{p,\ell}$ in Figure \ref{fig_graph}.

    \begin{figure}[ht]
    \centering
    \begin{tikzpicture}[
    every node/.style={circle,fill=black,inner sep=1.5pt},
    scale=1
]
\usetikzlibrary{calc}

\node (t0) at (0,1) {};
\node (b0) at (0,0) {};
\node (t1) at (1,1) {};
\node (b1) at (1,0) {};

\node (m2) at (2,1) {};
\node (t3) at (3,1) {};
\node (b3) at (3,0) {};

\node (m4) at (4,1) {};
\node (t5) at (5,1) {};
\node (b5) at (5,0) {};

\node (m6) at (6,1) {};
\node (t7) at (7,1) {};
\node (b7) at (7,0) {};

\draw (t0)--(t1)--(m2)--(t3)--(m4)--(t5)--(m6)--(t7)--++(0.45,0);

\draw (b0)--(b1)--(b3)--(b5)--(b7)--++(0.45,0);

\draw (t0)--(b0);
\draw (t1)--(b1);
\draw (t3)--(b3);
\draw (t5)--(b5);
\draw (t7)--(b7);

\node[draw=none,fill=none,left=6pt] at (t0) {$p$};
\node[draw=none,fill=none,left=6pt] at (b0) {$\ell$};

\node[draw=none,fill=none,right=7pt] at ($(t7)+(0.45,0)$) {$\cdots$};
\node[draw=none,fill=none,right=7pt] at ($(b7)+(0.45,0)$) {$\cdots$};

\end{tikzpicture}
    \caption{The subgraph $L_{p,\ell}$ of $\hat{\Gamma}$.}
    \label{fig_graph}
    \end{figure}

    We claim $\mrm{Aut}(\hat{\Gamma}) \cong_{\mrm{top}} \mrm{Aut}(P)$. 
    There is an obvious continuous map $$\Psi:\mrm{Aut}(P) \to \mrm{Aut}(\hat{\Gamma}).$$ To show $\Psi$ is a homeomorphism, we construct the inverse $\Theta : \mrm{Aut}(\hat{\Gamma}) \to \mrm{Aut}(P)$.
    Since $\Gamma'$ is square free (Lemma \ref{lemma_proj_plane_graph_properties}), any  $\sigma \in \mrm{Aut}(\hat{\Gamma})$ must take the square in $L_{p,\ell}$ to the square in $L_{p',\ell'}$ for some $p' \in A, \ell' \in B$. Since $\Gamma'$ is pentagon free (again Lemma \ref{lemma_proj_plane_graph_properties}), it follows that $\sigma(L_{p,\ell}) = L_{p',\ell'}$, and moreover that $\sigma(p) = p'$ and $\sigma(\ell) = \ell'$. We therefore define $\Theta(\sigma) \coloneqq  \sigma\vert_{\Gamma'}$. It is not hard to see that $\Theta$ is continuous and that $\Theta$ and $\Psi$ are inverses. This proves the claim.

    \smallskip \noindent We conclude $\mrm{Aut}(\hat{\Gamma}) \cong_{\mrm{top}} \mrm{Aut}(P) \cong_{\mrm{top}} \mrm{Aut}(\Gamma)$. Since $\Gamma'$ is star-connected, triangle free, contains $P_4$ as an induced subgraph, and has the star-property, it is easy to see the same is true of $\hat{\Gamma}$. 
    
    \smallskip \noindent Finally, since the assignment $\Gamma \mapsto P_{\Gamma}^*$ is Borel (see \cite[Theorem 3]{pao3}), the assignment $P \mapsto \Gamma'$ is Borel, and the assignment $\Gamma'\mapsto \hat{\Gamma}$ can be done in a Borel way, it follows that the composition $\Gamma \mapsto \hat{\Gamma}$ is Borel. 
\end{proof}

	\begin{fact}\label{second_fact} For every non-Archimedean Polish group $G$ there is a countable graph $\Gamma$ such that $G \cong_{\mrm{top}} \mrm{Aut}(\Gamma)$. 
\end{fact}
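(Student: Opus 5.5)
The plan is to realize $G$ as the automorphism group of a countable relational structure and then encode that structure into a graph, all in a coding that respects the topologies. Since $G$ is non-Archimedean Polish, I may assume $G$ is a closed subgroup of $\mrm{Sym}(\omega)$, as recalled in the introduction (cf.\ \cite[1.5]{Becker_Kechris_1996}). For each $n \geq 1$, $G$ acts coordinatewise on $\omega^n$; let $R_{n,i}$ ($i<\omega$) be an enumeration of the $G$-orbits on $\omega^n$. Set $M=(\omega,(R_{n,i})_{n,i})$. Every $g\in G$ preserves each orbit, so $G\leq \mrm{Aut}(M)$. Conversely, take $\sigma\in\mrm{Aut}(M)$ and a finite tuple $\bar a$. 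The tuple $\sigma(\bar a)$ lies in the same $G$-orbit as $\bar a$, so some $g\in G$ agrees with $\sigma$ on $\bar a$. Hence $\sigma\in\overline{G}=G$. Both groups carry the subspace topology from $\mrm{Sym}(\omega)$, so $G=\mrm{Aut}(M)$ as topological groups.

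Next I encode $M$ into a countable graph $\Gamma$ by a rigid gadget construction. The vertices of $\Gamma$ are the elements of $\omega$ (the ``points''), which I mark by attaching to each point a pendant gadget $T_0$. For each pair $(n,i)$ I fix a finite rigid graph $T_{n,i}$, chosen so that these graphs are pairwise non-isomorphic, not isomorphic to $T_0$, and with large girth or distinct degree signatures so that they are recognizable. For every tuple $(a_1,\dots,a_n)\in R_{n,i}$ I add a new vertex $c$ carrying a copy of $T_{n,i}$. I join $c$ to $a_k$ through a path of length $k+1$, so that the order of the coordinates is recorded by the path lengths.

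Any automorphism of $M$ extends uniquely to $\Gamma$, which gives a continuous injective homomorphism $\mrm{Aut}(M)\to\mrm{Aut}(\Gamma)$. For surjectivity I argue that a graph automorphism must permute the point vertices, because these are the only vertices carrying $T_0$. It must send tuple-vertices of type $(n,i)$ to tuple-vertices of the same type, and it must preserve the path lengths, so its restriction to $\omega$ lies in $\mrm{Aut}(M)$. It is determined by that restriction, because the added structure over each tuple is rigid and is determined by the tuple. Continuity of the restriction map is clear. Alternatively, the open mapping theorem for Polish groups (as in Lemma~\ref{lemma_top_decomp}) gives that this continuous bijection between Polish groups is a homeomorphism.

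The main obstacle is making sure that no unintended automorphisms arise from gadgets interacting, for instance a path segment or a point neighbourhood mimicking a gadget. My first approach is to choose the gadgets with vertex degrees far from those occurring elsewhere, using distinct large degrees $d_{n,i}$ at the tuple vertices and a distinct degree at the points, and to use long subdivided paths so that degree-$2$ vertices form only the connecting paths. With these choices, vertex type is determined by local degree data and the verification is routine.
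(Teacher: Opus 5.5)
Your proposal is correct and follows essentially the same route as the paper. You realize $G$ as the automorphism group of its canonical relational structure (the $G$-orbits on $\omega^n$), then code that structure into a countable graph by attaching rigid gadgets to tuples; the paper does the same by invoking the rigid $n$-tag coding of \cite[Theorem~13.1.2]{GaoBook}. The only real difference is that you index gadgets by $(n,i)$ directly, which avoids the paper's reduction to one relation per arity, and the recognizability details you call routine are indeed standard (note that the point vertices automatically have infinite degree, since every tuple lies in some orbit).
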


	\begin{proof} This is probably folklore but for an explicit construction cf. \cite[Theorem~13.1.2]{GaoBook}, recalling that any non-Archimedean Polish group is the group of automorphisms of a countable relational structure. Notice that the preservation of the automorphism group is not explicitly stated in \cite[Theorem~13.1.2]{GaoBook} but it is immediate to see that this indeed holds, using that the $n$-tags used in the proof of \cite[Theorem~13.1.2]{GaoBook} are rigid objects together with the other properties of the construction. Notice that, to be precise, in \cite[Theorem~13.1.2]{GaoBook} it is assumed that the relational language $L = \{R_n : 0 < n < \omega\}$ has the property that $R_n$ has arity $n$, but by using the trick employed in the proof of \cite[Theorem~3.6.1]{GaoBook} this can be assumed w.l.o.g.; notice in fact that the structure constructed in the proof of \cite[Theorem~3.6.1]{GaoBook} (i.e., the one in the language $L$, in the notation of the proof) is bidefinable with the original one (i.e., the one in the language $L_0$).
	\end{proof}
	We denote by $\mathbf{Cl}(\mrm{Sym}(\omega))$ the Borel space of closed subgroups of $\mrm{Sym}(\omega)$ (cf. e.g.~\cite{nies}) and by $\mrm{Groups}_\omega$ the Borel space of groups with domain $\omega$ (cf. e.g.~\cite[Chapter~11]{GaoBook}).

\begin{theorem}\label{thm:stronger}
    There is a Borel map $f : \mathbf{Cl}(\mrm{Sym}(\omega)) \to \mrm{Groups}_\omega$ such that for all $G \in \mathbf{Cl}(\mrm{Sym}(\omega))$, the group $f(G)$ satisfies:
    \begin{enumerate}[(1)]
        \item\label{part:embed} There is a closed subgroup $\mathcal{G} \leq \mrm{Aut}(f(G))$ topologically isomorphic to $G$.
        \item \label{part:inn_closed} $\mrm{Inn}(f(G))$ is a closed (equivalently, discrete) subgroup of $\mrm{Aut}(f(G))$.
        \item \label{part:split} There is a topological decomposition $\mrm{Aut}(f(G))= \mrm{Inn}(f(G)) \rtimes \mathcal{G}$.
    \end{enumerate}
\end{theorem}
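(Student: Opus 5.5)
The plan is to take $f(G) = W(\hat{\Gamma}_G)$, where $\Gamma_G$ is a countable graph with $\mrm{Aut}(\Gamma_G) \cong_{\mrm{top}} G$ (Fact \ref{second_fact}), and $\hat{\Gamma}_G$ is the graph given by Lemma \ref{lemma_graph_construction}. We then read off (1)--(3) from the results of Sections \ref{section_tits_topological}--\ref{sec_Spe=Inn}.

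First, Borelness. I would check that the construction of Fact \ref{second_fact} can be carried out uniformly in $G \in \mathbf{Cl}(\mrm{Sym}(\omega))$. The canonical structure whose relations are the orbits of $G$ on $\omega^n$ depends Borel-ly on $G$, since $G$ has a Borel enumeration of a dense sequence (Kuratowski--Ryll-Nardzewski). The coding of relational structures into graphs in \cite[Theorem~13.1.2]{GaoBook} is explicit, hence Borel. Composing with the Borel map $\Gamma \mapsto \hat{\Gamma}$ of Lemma \ref{lemma_graph_construction}(3) gives a Borel assignment $G \mapsto \hat{\Gamma}_G$. Next, I fix a Borel bijection of the normal forms of $W(\hat{\Gamma}_G)$ with $\omega$; for instance, enumerate reduced words in shortlex order with respect to the vertex ordering of $\hat{\Gamma}_G \subseteq \omega$. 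Multiplication is then computed by a uniform algorithm from the edge relation, so $G \mapsto f(G) \in \mrm{Groups}_\omega$ is Borel. I expect this bookkeeping, in particular the uniformity of Fact \ref{second_fact}, to be the main technical obstacle, although it is conceptually routine.

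Second, the algebra. By Lemma \ref{lemma_graph_construction}(1), $\hat{\Gamma}$ has the star-property, so Fact \ref{fact_F_gamma_star_property} gives $F(\hat{\Gamma}) = \mrm{Aut}(\hat{\Gamma})$. Set $\mathcal{G} = F(\hat{\Gamma})$, acting on $W$ by permuting generators. By Proposition \ref{Fgamma_closed} it is closed. It is topologically isomorphic to $\mrm{Aut}(\hat{\Gamma})$, because an automorphism fixes finitely many generators iff it fixes the words built from them; hence $\mathcal{G} \cong_{\mrm{top}} \mrm{Aut}(\Gamma_G) \cong_{\mrm{top}} G$, which is (1).

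For (2), I verify the finite-star-base property. $\hat{\Gamma}$ is triangle-free and contains an induced $P_4$, so it is not a star, and so $Z_{\hat{\Gamma}} = \emptyset$: a universal vertex in a triangle-free graph forces the rest to be independent, contradicting the presence of $P_4$. Now take an edge $\{a,b\}$. Then $N^*(a) \cap N^*(b) = \{a,b\}$ by triangle-freeness. Add a vertex $c$ nonadjacent to $a$, one $d$ nonadjacent to $b$, which exist since neither is universal. The set $A = \{a,b,c,d\}$ then gives $\bigcap_{x \in A} N^*(x) = \emptyset = Z_{\hat{\Gamma}}$. Theorem \ref{theorem:inn_closed} now yields (2).

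For (3), I use that $\hat{\Gamma}$ is star-connected, together with Theorem \ref{theorem_spe_inn}, to get $\mrm{Spe}(W) = \mrm{Inn}(W)$. Fact \ref{fact_tits} then gives $\mrm{Aut}(W) = \mrm{Inn}(W) \rtimes \mathcal{G}$, and Theorem \ref{theorem_tits_topological} says this decomposition is topological.
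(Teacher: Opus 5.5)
Your argument is correct, but for (2) and (3) it takes a different route from the paper. The paper obtains both from the external result \cite[Theorem~2.19]{pao2}. Since $\hat{\Gamma}_G$ is star-connected, triangle-free and contains an induced $P_4$, that theorem gives $\mrm{Spe}(W)=\mrm{Inn}(W)$. Then $\mrm{Inn}(W)$ is closed because $\mrm{Spe}(W)$ is (Proposition~\ref{Spe_closed}), and Theorem~\ref{theorem_tits_topological} gives the topological splitting.

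You instead exhibit an explicit four-element star base; your checks that $Z_{\hat{\Gamma}}=\emptyset$ and that $\{a,b\}\cap N^*(c)\cap N^*(d)=\emptyset$ are fine. You then get closedness and discreteness of $\mrm{Inn}(W)$ directly from Theorem~\ref{theorem:inn_closed}. Finally you get $\mrm{Spe}(W)=\mrm{Inn}(W)$ from Theorem~\ref{theorem_spe_inn}, and hence ultimately from the Density Theorem.

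This is exactly the alternative flagged in the paper's closing Remark. The authors avoided it so that the realization theorem rests only on a published sufficient condition, not on the rank-two conjugacy and Helly machinery of Section~\ref{sec_Spe=Inn}. Your route, in exchange, is self-contained within the paper and shows which hypotheses actually matter. Only the star-property, star-connectedness and the finite-star-base property are used; triangle-freeness and the induced $P_4$ serve merely to produce the finite star base. This is consistent with the paper's remark that the construction of $\hat{\Gamma}$ in Lemma~\ref{lemma_graph_construction} could then be simplified.

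There is one minor bookkeeping slip. With infinitely many generators, shortlex order on normal forms is not of order type $\omega$, since all words of length one precede every word of length two. You should instead enumerate the shortlex normal forms through a fixed bijection $\omega^{<\omega}\to\omega$; Borelness is unaffected.
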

\begin{proof}
    For $G \in \mathbf{Cl}(\mrm{Sym}(\omega))$, let $\Gamma_G$ be the graph constructed in Fact \ref{second_fact}, let $\hat{\Gamma_G}$ be the graph constructed in Lemma \ref{lemma_graph_construction}, and define $f(G) = W(\hat{\Gamma_G})$.
    Since $\hat{\Gamma_G}$ has the star-property by Lemma \ref{lemma_graph_construction}, Fact \ref{fact_F_gamma_star_property} implies $F(\hat{\Gamma_G}) = \mrm{Aut}(\hat{\Gamma_G}) \cong_{\mrm{top}} G$. We let $\mathcal{G} = F(\hat{\Gamma_G})$. This establishes claim \eqref{part:embed}.
    By Lemma \ref{lemma_graph_construction}, $\hat{\Gamma_G}$ is star-connected, triangle-free, and contains $P_4$ as a subgraph. Thus, \cite[Theorem~2.19]{pao2} implies that $\mrm{Spe}(W(\hat{\Gamma_G})) = \mrm{Inn}(W(\hat{\Gamma_G}))$. This establishes claim \eqref{part:inn_closed}.

    \smallskip \noindent Theorem \ref{theorem_tits_topological} then implies that the Tits decomposition $$\mrm{Aut}(W(\hat{\Gamma_G}))= \mrm{Spe}(W(\hat{\Gamma_G}))\rtimes F(\hat{\Gamma_G}) =
    \mrm{Inn}(W(\hat{\Gamma_G})) \rtimes \mathcal{G} $$
    is topological, establishing claim \eqref{part:split}.  

   \smallskip \noindent Finally, concerning Borelness of the construction, notice the following:
\begin{enumerate}[(1)]
	\item passing from a closed subgroup of $\mrm{Sym}(\omega)$ to its canonical structure (cf. e.g. \cite[Proof of Theorem~4.1.4]{hodges}) is well-known to be Borel;
	\item the construction from Fact~\ref{second_fact} is Borel (cf. \cite[Theorem~13.1.2]{GaoBook});
	\item the construction from Lemma \ref{lemma_graph_construction} is Borel;
	\item the construction $\Gamma \mapsto W(\Gamma)$ is easily seen to be Borel.
\end{enumerate}

\smallskip \noindent Composing (1)--(4), we conclude that the map $G \mapsto W(\hat{\Gamma_G})$ is Borel, as desired.

\end{proof}

    Notice also that, by Proposition~\ref{Spe_closed}, in our construction
$\mrm{Inn}(W(\hat{\Gamma_G}))$ is closed in $\mrm{Aut}(W(\hat{\Gamma_G}))$,
and hence $\mrm{Inn}(W(\hat{\Gamma_G}))$ is discrete by Theorem~\ref{theorem:inn_closed}.

\begin{proof}[Proof of Theorem \ref{thm:main}]
    Suppose $G$ is a Hausdorff topological group which realizes as $\mrm{Out}(W) = \mrm{Aut}(W)/\mrm{Inn}(W)$ in the quotient topology for a countable group $W$. Since $G$ is Hausdorff, $\mrm{Inn}(W)$ must be closed in $\mrm{Aut}(W)$. Since $W$ is countable, $\mrm{Aut}(W)$ is a non-Archimedean Polish group. It is well known that the quotient of a Polish group by a closed normal subgroup is Polish (see, e.g., \cite[Proposition 1.2.3]{Becker_Kechris_1996}). Further, the image of a countable neighborhood basis of open subgroups of $\mrm{Aut}(W)$ is a countable neighborhood basis of open subgroups of $\mrm{Out}(W)$. Thus, $G\cong_{\mrm{top}} \mrm{Out}(W)$ is a non-Archimedean Polish group.
    
    \smallskip \noindent Conversely, let $G$ be a non-Archimedean Polish group. We may assume $G \in \mathbf{Cl}(\mrm{Sym}(\omega))$, and we will show $\mrm{Out}(f(G)) \cong_{\mrm{top}} G$. By statement \eqref{part:embed} of Theorem \ref{thm:stronger}, it is enough to show $\mrm{Out}(f(G)) \cong_{\mrm{top}} \mathcal{G}$. By statement \eqref{part:split} of Theorem \ref{thm:stronger}, there is a projection homomorphism \mbox{$\pi: \mrm{Aut}(f(G)) \to \mathcal{G}$} such that $\pi$ is a continuous open map. Therefore, the induced continuous homomorphism  \mbox{$\widetilde{\pi} : \mrm{Out}(f(G)) \to \mathcal{G}$} is open, and hence it is a topological isomorphism.
\end{proof}

\begin{remark}
    Note that the proofs of Theorem \ref{thm:main} and Theorem \ref{thm:stronger} do not rely on our new results Theorem \ref{theorem_spe_inn} and Theorem \ref{density_theorem}, only on the sufficient conditions for $\mrm{Spe}(W) = \mrm{Inn}(W)$ introduced in \cite[Theorem~2.19]{pao2}. While these new results could simplify the proofs of Theorems \ref{thm:main}  and \ref{thm:stronger} (in particular the construction of $\hat{\Gamma}$ in Lemma \ref{lemma_graph_construction}), we have chosen to use only \cite[Theorem~2.19]{pao2} to emphasize the independence of the results.
\end{remark}

\bibliographystyle{amsplain}
\bibliography{references.bib}

@article{castella,
  author  = {A. Castella},
  title   = {Sur les Automorphismes et la Rigidit{\'e} des Groupes de {C}oxeter {\`a} Angles Droits},
  journal = {J. Algebra},
  volume  = {301},
  number  = {2},
  pages   = {642--669},
  year    = {2006},
}

@book{hodges,
  author    = {W. Hodges},
  title     = {{Model Theory}},
  publisher = {Cambridge University Press},
  address   = {Cambridge},
  year      = {1993},
}

@article{pao1,
  author  = {B. M{\"u}hlherr and G. Paolini and S. Shelah},
  title   = {First-order aspects of {C}oxeter groups},
  journal = {J. Algebra},
  volume  = {595},
  pages   = {297--346},
  year    = {2022},
}

@article{franzsen-howlett-muhlherr,
  author  = {W. N. Franzsen and R. B. Howlett and B. M{\"u}hlherr},
  title   = {Reflections in Abstract {C}oxeter Groups},
  journal = {Comment. Math. Helv.},
  volume  = {81},
  number  = {3},
  pages   = {665--697},
  year    = {2006},
}

@article{gao-nies-paolini,
  author  = {S. Gao and A. Nies and G. Paolini},
  title   = {Procountable Groups are not Classifiable by Countable Structures},
  journal = {Preprint},
  note    = {\url{https://arxiv.org/abs/2512.12256}},
  year    = {2025},
}

@book{GaoBook,
  author    = {Su Gao},
  title     = {{Invariant Descriptive Set Theory}},
  publisher = {CRC Press},
  year      = {2009},
}

@article{pao2,
  author  = {T. Hyttinen and G. Paolini},
  title   = {{C}oxeter groups and abstract elementary classes: the right-angled case},
  journal = {Notre Dame J. Form. Log.},
  volume  = {60},
  number  = {4},
  pages   = {707--731},
  year    = {2019},
}

@article{nies,
  author  = {A. Kechris and A. Nies and K. Tent},
  title   = {The complexity of topological group isomorphism},
  journal = {J. Symb. Log.},
  volume  = {83},
  number  = {3},
  pages   = {1190--1203},
  year    = {2018},
}

@article{muhlherr,
  author  = {B. M{\"u}hlherr},
  title   = {Automorphisms of Graph-Universal {C}oxeter Groups},
  journal = {J. Algebra},
  volume  = {200},
  pages   = {629--649},
  year    = {1998},
}

@article{pao3,
  author  = {G. Paolini},
  title   = {The class of non-{D}esarguesian projective planes is {B}orel complete},
  journal = {Proc. Am. Math. Soc.},
  volume  = {146},
  pages   = {4927--4936},
  year    = {2018},
}

@article{tits,
  author  = {Jacques Tits},
  title   = {Sur le Groupe des Automorphismes de Certains Groupes de {C}oxeter},
  journal = {J. Algebra},
  volume  = {113},
  number  = {2},
  pages   = {346--357},
  year    = {1988},
}

@book{davis,
  author    = {Michael W. Davis},
  title     = {{The Geometry and Topology of Coxeter Groups}},
  series    = {London Mathematical Society Monographs Series},
  volume    = {32},
  publisher = {Princeton University Press},
  address   = {Princeton, NJ},
  year      = {2008},
}

@article{helly,
  author  = {J. Chalopin and V. Chepoi and A. Genevois and H. Hirai and D. Osajda},
  title   = {{H}elly groups},
  journal = {Geom. Topol.},
  volume  = {29},
  number  = {1},
  pages   = {1--70},
  year    = {2025},
}

@article{Matumoto1989,
  author  = {Takao Matumoto},
  title   = {Any group is represented by an outer automorphism group},
  journal = {Hiroshima Math. J.},
  volume  = {19},
  year    = {1989},
  pages   = {209--219},
  doi     = {10.32917/hmj/1206129490}
}

@book{Becker_Kechris_1996, 
place={Cambridge}, 
series={London Mathematical Society Lecture Note Series}, 
title={{The Descriptive Set Theory of Polish Group Actions}}, 
publisher={Cambridge University Press}, 
author={Becker, Howard and Kechris, Alexander S.}, 
year={1996}, 
collection={London Mathematical Society Lecture Note Series}}

@article{bumagin_wise,
title = "Every group is an outer automorphism group of a finitely generated group",
author = "Inna Bumagin and Wise, Daniel T.",
year = "2005",
month = aug,
day = "1",
doi = "10.1016/j.jpaa.2004.12.033",
volume = "200",
pages = "137--147",
journal = "J. Pure Appl. Algebra",
issn = "0022-4049",
publisher = "Elsevier B.V.",
number = "1-2",
}

@book{DixonMortimer1996,
  author    = {John D. Dixon and Brian Mortimer},
  title     = {{Permutation Groups}},
  series    = {Graduate Texts in Mathematics},
  volume    = {163},
  publisher = {Springer-Verlag},
  address   = {New York},
  year      = {1996},
  isbn      = {978-0-387-94599-9}
}

@article{SILVER19801,
title = {Counting the number of equivalence classes of {B}orel and coanalytic equivalence relations},
journal = {Ann. Math. Logic},
volume = {18},
number = {1},
pages = {1-28},
year = {1980},
issn = {0003-4843},
doi = {https://doi.org/10.1016/0003-4843(80)90002-9},
url = {https://www.sciencedirect.com/science/article/pii/0003484380900029},
author = {Jack H. Silver}
}

@article{DROSTE_GIRAUDET_GOBEL_2001, 
title={ALL GROUPS ARE OUTER AUTOMORPHISM GROUPS OF SIMPLE GROUPS}, volume={64}, DOI={10.1112/S0024610701002484}, number={3}, journal={J. Lond. Math. Soc.}, author={Droste, Manfred and Giraudet, Mich{\`e}le and G{\"o}bel, R{\"u}diger}, year={2001}, pages={565–575}}

@article{GoebelParas2000,
  title   = {Outer automorphism groups of metabelian groups},
  author  = {G{\"o}bel, R{\"u}diger and Paras, Agnes T.},
  journal = {J. Pure Appl. Algebra},
  volume  = {149},
  number  = {3},
  pages   = {251--266},
  year    = {2000},
  publisher = {Elsevier},
  doi     = {10.1016/S0022-4049(99)00022-5}
}
\end{document}